\documentclass[twocolumn,letterpaper]{IEEEAerospaceCLS}  

\usepackage[]{graphicx}    
\usepackage[colorlinks=true, urlcolor=blue, linkcolor=black, citecolor=black]{hyperref} 
\newcommand{\ignore}[1]{}  
\usepackage{amsmath}
\usepackage{amssymb}
\usepackage{bm}
\usepackage{mathtools}
\usepackage{xcolor}
\usepackage{cleveref}
\usepackage[ruled, lined, linesnumbered, commentsnumbered]{algorithm2e}
\usepackage{float}
\usepackage{xurl} 
\usepackage{hyperref} 

\newtheorem{theorem}{Theorem}[section]
\newtheorem{lemma}[theorem]{Lemma}

\newtheorem{assumption}{Assumption}
\newtheorem{remark}{Remark}[section]

\begin{document}
\title{Online Modeling and Sequential Convex Programming for Lunar Landing Trajectory Optimization}

\author{%
Zhenbo Wang\\
Department of Mechanical and Aerospace Engineering\\
The University of Tennessee\\
1512 Middle Drive, Knoxville, TN 37996\\
zwang124@utk.edu
\thanks{\footnotesize 979-8-3195-1048-8/27/$\$31.00$ \copyright2027 IEEE}              
}

\maketitle

\thispagestyle{plain}
\pagestyle{plain}

\maketitle

\thispagestyle{plain}
\pagestyle{plain}

\begin{abstract}
This paper presents a guidance framework for lunar powered descent and landing that combines sequential convex programming (SCP) with real-time online model identification. A nonconvex energy-optimal landing problem is developed and then reformulated into a sequence of convex second-order cone programs (SOCPs) through a change of variables, successive linearization, and a lossless second-order cone relaxation of the thrust direction constraint. An online identification layer, built from a recursive least squares (RLS) filter with exponential forgetting and an exponential moving average (EMA) smoother, estimates unknown gravitational, thrust-scale, and mass-gauging perturbations from noisy navigation measurements and injects a corrected bias term into the dynamics constraint of each convex subproblem at every guidance cycle. Building on this architecture and my prior work in this area, a baseline SCP algorithm and a receding-horizon online SCP algorithm with model identification are developed. Also, I try to explore some theoretical foundations, establishing the losslessness of the convex relaxation, the mean-square stability and convergence of the identification filters, the guaranteed convergence of the SCP iteration, and explicit convergence radius and convergence rate results. Numerical simulations across four perturbation scenarios of increasing complexity are implemented in MATLAB using YALMIP and the ECOS solver. The results show that the proposed online algorithm consistently reduces landing position and velocity error and better tracks the true propellant consumption relative to an uncorrected nominal trajectory, while retaining the predictable convergence and real-time computational properties of convex optimization.
\end{abstract}

\tableofcontents

\section{Introduction}
\label{sec:introduction}

Lunar landing has remained one of the most demanding maneuvers in
spaceflight , and
its importance has recently grown with the resurgence of international
and commercial interest in returning humans and robotic systems to the
lunar surface \cite{wang2025a,wang2025b}. Because the Moon has no atmosphere, a descending
spacecraft cannot rely on aerodynamic drag or parachutes to slow down; instead,
deceleration from orbital or suborbital speed to a safe touchdown must be
accomplished entirely through propulsive thrust, typically over a descent
lasting only tens of seconds to a few minutes. This places the vehicle in
a regime where every kilogram of propellant is precious, where attitude and
thrust-vector control must be coordinated continuously and precisely, and
where the terminal phase of flight leaves little margin for
recovery from a poorly executed approach. Trajectory optimization is the
discipline responsible for converting these competing demands (e.g., fuel
economy, operation constraints, actuator limits, and terminal
accuracy) into an executable guidance command profile, and it has
evolved from the largely open-loop, precomputed descent
profiles of the Apollo era into a real-time, closed-loop computational
capability that modern landers depend upon for autonomous, pinpoint, and
hazard-aware descent \cite{song2020survey}. As lunar missions increasingly target scientifically
compelling but topographically challenging sites, such as the permanently
shadowed regions near the lunar south pole, the trajectory optimization
problem has become not merely a planning convenience but a safety-critical
component of the guidance, navigation, and control stack \cite{lorenz2023planetary}.

A substantial body of work has addressed the lunar and planetary powered
descent trajectory optimization problem using indirect and direct methods \cite{liu2026survey}. Indirect methods apply Pontryagin's minimum principle to
derive the necessary conditions of optimality, transforming the original
optimal control problem into a two-point boundary value problem in the
states and costates. This approach yields elegant, often closed-form
characterizations of the optimal control structure and has a long history
of success in powered descent guidance; however, it is well documented that
indirect methods suffer from a small region of convergence, extreme
sensitivity to the initial guess of the costate variables (which typically
lack direct physical interpretation), and the need for a priori knowledge
of the constrained and unconstrained arc structure when path constraints
such as thrust or attitude bounds are present. These challenges make
indirect methods difficult to automate reliably across a wide range of
initial conditions and mission scenarios. Direct
methods, by contrast, transcribe the continuous-time optimal control
problem into a finite-dimensional nonlinear program through collocation or
pseudospectral discretization, and then rely on general nonlinear
programming solvers to compute a solution. Direct methods are considerably
more robust to poor initial guesses than indirect methods and can handle
complex path constraints, but the
resulting nonlinear programs are nonconvex in general, and the underlying
solvers offer no guarantee of finding a global or even a local optimum, of converging
within a bounded number of iterations, or of preserving feasibility if the
solver is terminated early. There are  serious concerns for any guidance algorithm
that must execute within a fixed computational budget onboard a
spacecraft.

Over roughly the past two decades, convex-optimization-based approaches
have emerged as an increasingly attractive alternative, particularly
following the demonstration that the powered descent guidance problem for
planetary landing can be exactly or losslessly reformulated as a
SOCP through a combination of variable substitution
and convex relaxation \cite{acikmese2007convex}. Because convex programs, and SOCPs in particular, can be solved to global optimality by interior-point
methods very efficiently that is predictable and largely
insensitive to the initial guess, this line of work has enabled real-time,
onboard trajectory generation with strong feasibility and optimality
guarantees, and convex-optimization-based guidance has since been
extended, generalized, and flight-demonstrated across a range of
planetary and lunar landing scenarios \cite{wang2024survey}. When the underlying dynamics or
constraints are not already convex, sequential convex programming (SCP) offers a
natural extension: a sequence of convex subproblems is solved, each
constructed by convexifying (usually linearizing) the nonconvex elements of the problem about the
solution of the previous iteration, until the iterates converge to a point
satisfying the necessary conditions of optimality of the original
nonconvex problem.
More recently, data-driven and machine-learning-based
approaches, including deep learning and reinforcement learning architectures, have been explored as a
means of learning descent or landing policies directly from simulated or
historical trajectory data, with the appeal of extremely fast onboard
inference once a policy has been trained and some demonstrated robustness
to noise and parameter uncertainty during training \cite{cheng2019fast,cheng2019real,cheng2020real,gaudet2020deep}.

Despite this progress,
each of these methodological families exhibits limitations that are consequential for lunar landing. Indirect and direct methods,
as well as the convex and SCP approaches built
upon them, are often formulated around a fixed, nominal dynamical model
specified before flight. Very few of these methods, in their standard form,
provide a mechanism for revising that model in response to discrepancies
observed during the mission operation. Data-driven and learning-based methods,
for their part, typically offer no formal guarantee of constraint
satisfaction, convergence, or real-time computational bound at deployment
time, and their performance under conditions not represented in the
training distribution remains difficult to certify, a significant concern
for safety-critical, one-shot maneuvers such as a lunar landing. More
broadly, essentially all of these approaches share a common vulnerability:
the true descent dynamics experienced by a lander, subject to lunar
gravitational anomalies, thrust miscalibration, and propellant-mass
measurement uncertainty, will inevitably deviate to some degree from
whatever nominal model was used to compute the precomputed or onboard
trajectory, and a guidance algorithm that cannot detect and adapt to such
deviations in real time is correspondingly exposed to degraded landing
accuracy or, in the worst case, mission failure.

These observations motivate the central idea of this paper: the
predictable convergence, constraint satisfaction, and real-time
computational properties of SCP may be combined
with an online model identification mechanism to produce a guidance
framework that retains the theoretical guarantees of convex
optimization while also adapting in closed loop to
environmental and system uncertainties that cannot be characterized before
flight. The key insight that makes this combination
practical is that an additive correction to the nominal dynamics model,
estimated in real time from onboard navigation measurements, can be
introduced into the bias term of the linearized dynamics constraint of the
convex subproblem without altering the convex structure of that
subproblem. The SOCP solved at each
guidance cycle remains tractable and amenable to
the same convergence and optimality analysis, regardless of the magnitude
or time-variation of the estimated perturbation. Building on this insight,
this paper makes the following contributions: (1) It develops a
convex relaxation of the lunar powered descent
trajectory optimization problem, via a change of variables, successive
linearization, and a lossless SOC relaxation of the thrust
direction constraint, that reduces the original nonconvex optimal control
problem to a sequence of SOCPs. (2) It
introduces an online model identification framework, combining a recursive
least squares (RLS) filter with exponential forgetting and an exponential moving
average (EMA) smoother, that estimates unknown gravitational, thrust, and
mass-gauging perturbations from noisy flight measurements and feeds a
corrected bias term into the convex subproblem at every guidance cycle.
(3) It presents both a baseline SCP
algorithm and an online, receding-horizon extension that integrates the model
identification framework into closed-loop guidance. (4) It provides a
detailed theoretical analysis establishing the losslessness of the convex
relaxation, the mean-square stability and convergence of the online
identification filters, the guaranteed convergence of the SCP iteration, and explicit characterizations of the convergence
radius and convergence rate. (5) It numerically validates the proposed framework across four representative perturbation scenarios of increasing
complexity, demonstrating that the online algorithm consistently improves
landing accuracy and robustness relative to the nominal,
uncorrected trajectory.

The remainder of this paper is organized as follows. Section~\ref{sec:problem_formulation} formulates
an energy-optimal lunar landing trajectory
optimization problem, including the governing dynamics, boundary
conditions, path constraints, and objective functional. Section~\ref{sec:convex_relaxation}
develops the convex relaxation of this problem through a change of
variables that linearizes the dynamics, a successive linearization and
SOC relaxation of the remaining nonconvex constraints, and a
trapezoidal discretization that yields a finite-dimensional convex
subproblem. Section~\ref{sec:model_identification} introduces the online model identification
framework, characterizing the sources of dynamical uncertainty considered
in this work, developing the RLS and EMA filters used to estimate the resulting perturbation in real
time, and describing how the identified perturbation is incorporated into
an augmented linearized dynamical model. Section~\ref{sec:scp} presents the baseline
SCP algorithm and its online extension, detailing
the closed-loop measurement, estimation, and receding-horizon replanning
architecture that integrates online model identification with SCP. Section~\ref{sec:theoretical_analysis} provides a theoretical analysis of the
proposed framework. Section~\ref{sec:simulations} presents numerical simulations validating the proposed
framework across four perturbation scenarios. Finally, Section~\ref{sec:conclusions} concludes the paper and
discusses directions for future work.

\section{Problem Formulation}
\label{sec:problem_formulation}

This section formulates a three-dimensional (3D) lunar powered descent and landing trajectory optimization problem that serves as the basis for this study. The lander must be guided from a given suborbital or near-surface state to a designated landing site using rocket thrust as the only means of maneuver, since the lunar environment provides neither an atmosphere nor aerodynamic forces. Consequently, every design choice or solution directly impacts propellant consumption, and the trajectory must respect physical bounds on the propulsion system throughout the entire landing mission.

\subsection{Coordinate Frame and State Variables}
\label{subsec:coords}

The motion of the lander is described in a local vertical, local horizontal (LVLH) frame with east--north--up (ENU) coordinates. The origin of the frame is fixed at the designated landing site on the lunar surface.  Within this frame, the lander position is $\mathbf{r} = [x,\,y,\,z]^{\mathsf{T}} \in \mathbb{R}^{3}$ and the velocity is $\mathbf{v} = [v_x,\,v_y,\,v_z]^{\mathsf{T}} \in \mathbb{R}^{3}$, where the $z$-axis points radially upward and the $x$- and $y$-axes span the local horizontal plane.  The lander mass at time $t$ is denoted as $m(t)$, and the gravitational acceleration at the lunar surface is taken as the constant vector $\mathbf{g} = [0,\,0,\,-g]^{\mathsf{T}}$ with $g = 1.6229\ \mathrm{m/s^2}$. The aggregate state vector is
\begin{equation}
  \mathbf{x}(t) = \bigl[x,\; y,\; z,\; v_x,\; v_y,\; v_z,\; m\bigr]^{\mathsf{T}}
          \in \mathbb{R}^{7}.
  \label{eq:state_vec}
\end{equation}

\subsection{Equations of Motion}
\label{subsec:eom}

The 3D governing equations of motion for the powered descent and landing are \cite{cheng2019real}
\begin{align}
  \dot{\mathbf{r}} &= \mathbf{v},
  \label{eq:rdot} \\[4pt]
  \dot{\mathbf{v}} &= \frac{T}{m}\,\hat{\bm{i}}_{\theta} + \mathbf{g},
  \label{eq:vdot} \\[4pt]
  \dot{m}          &= -\frac{T}{I_{\mathrm{sp}}\,g_0},
  \label{eq:mdot}
\end{align}
where $T \geq 0$ is the thrust magnitude, $\hat{\bm{i}}_{\theta} = [i_{\theta x},\,i_{\theta y},\,i_{\theta z}]^{\mathsf{T}}$ is the unit vector defining the thrust direction, $I_{\mathrm{sp}}$ is the specific impulse, and $g_0 = 9.81\ \mathrm{m/s^2}$ is the standard gravitational acceleration at Earth's sea level used as the reference constant in the rocket equation. The ratio $T/m$ in \eqref{eq:vdot} represents the thrust-induced specific force acting on the lander, while \eqref{eq:mdot} models the propellant mass flow rate. Both equations are coupled through the time-varying mass $m(t)$, which introduces a nonlinearity that will be addressed in Section~\ref{sec:convex_relaxation}.

The control vector is defined as
\begin{equation}
  \mathbf{u}(t)
    = \bigl[T,\;\hat{\bm{i}}_{\theta}\bigr]^{\mathsf{T}}
    = \bigl[T,\;i_{\theta x},\;i_{\theta y},\;i_{\theta z}\bigr]^{\mathsf{T}}
    \in \mathbb{R}^{4},
  \label{eq:ctrl_vec}
\end{equation}
comprising the thrust magnitude and the three components of the thrust direction.

\subsection{Boundary Conditions}
\label{subsec:bcs}

The powered descent trajectory begins at a prescribed initial state at time $t_0$:
\begin{equation}
  \mathbf{r}(t_0) = \mathbf{r}_0,
  \qquad
  \mathbf{v}(t_0) = \mathbf{v}_0,
  \qquad
  m(t_0) = m_0,
  \label{eq:ic}
\end{equation}
where $\mathbf{r}_0$, $\mathbf{v}_0$, and $m_0$ are given constants representing the initial position, velocity, and total (wet) mass of the lander, respectively. The trajectory terminates at a designated landing site at a fixed final time $t_f$:
\begin{equation}
  \mathbf{r}(t_f) = \mathbf{r}_f,
  \qquad
  \mathbf{v}(t_f) = \mathbf{v}_f,
  \label{eq:tc}
\end{equation}
where $\mathbf{r}_f$ specifies the target landing position and $\mathbf{v}_f$ is the desired terminal velocity.  For a soft landing, $\mathbf{v}_f = \mathbf{0}$ or a suitably small velocity within safe touchdown limits.  The terminal mass $m(t_f)$ is left free and will be determined as part of the optimization.

\subsection{Constraints}
\label{subsec:constraints}

First, the thrust magnitude is physically bounded between zero (engine off) and the maximum
available thrust $T_{\max}$:
\begin{equation}
  0 \;\leq\; T(t) \;\leq\; T_{\max},
  \qquad \forall\; t \in [t_0,\,t_f].
  \label{eq:thrust_bound}
\end{equation}
The lower bound permits thrust-off intervals, which is an appropriate assumption for an energy-optimal formulation.

Also, the thrust direction vector $\hat{\bm{i}}_{\theta}$ must be a unit vector at all times:
\begin{equation}
  \left\lVert\hat{\bm{i}}_{\theta}(t)\right\rVert_2^2
  = i_{\theta x}^2(t) + i_{\theta y}^2(t) + i_{\theta z}^2(t)
  = 1, \, \forall\; t \in [t_0,\,t_f].
  \label{eq:unit_vec}
\end{equation}
This equality constraint is nonconvex as it defines the surface of a unit sphere, and its treatment via convex relaxation is a central element of the methodology applied in this study.

In addition, the lander mass must remain above the dry mass $m_{\mathrm{dry}}$ (the structural mass excluding all propellant) and below the initial wet mass $m_0$ throughout the descent:
\begin{equation}
  m_{\mathrm{dry}} \;\leq\; m(t) \;\leq\; m_0,
  \qquad \forall\; t \in [t_0,\,t_f].
  \label{eq:mass_bound}
\end{equation}
The constraint $m(t) \geq m_{\mathrm{dry}}$ ensures that the optimizer does not consume propellant beyond physical availability.

\subsection{Objective Function}
\label{subsec:objective}

A minimum-energy objective is adopted, which penalizes the integral of the squared thrust magnitude over the entire descent:
\begin{equation}
  J = \int_{t_0}^{t_f} T^2(t)\;\mathrm{d}t.
  \label{eq:objective}
\end{equation}
Minimizing \eqref{eq:objective} promotes smooth, fuel-efficient thrust profiles and avoids bang-bang (on-off) control behavior that is associated with minimum-time or minimum-fuel objectives.  The energy-optimal formulation is particularly suitable for precision landing missions that require well-regulated thrust schedules compatible with onboard actuator and guidance constraints.

\subsection{Optimal Control Problem Statement}
\label{subsec:ocp}

Collecting the dynamics, boundary conditions, constraints, and objective defined above, a 3D energy-optimal lunar landing trajectory optimization problem is stated as follows.

\noindent\textbf{Problem 1} (Lunar Landing Trajectory Optimization):
\begin{equation}
  \min_{\mathbf{x}(\cdot),\,\mathbf{u}(\cdot)} \quad
  J = \int_{t_0}^{t_f} T^2(t)\;\mathrm{d}t
  \label{eq:P1}
\end{equation}
\textit{subject to}
\begin{align}
  &\dot{\mathbf{r}}(t) = \mathbf{v}(t),
  \quad \forall\;t \in [t_0,\,t_f],
  \label{eq:P1_rdot}\\
  &\dot{\mathbf{v}}(t) = \frac{T(t)}{m(t)}\,\hat{\bm{i}}_{\theta}(t) + \mathbf{g},
  \quad \forall\;t \in [t_0,\,t_f],
  \label{eq:P1_vdot}\\
  &\dot{m}(t) = -\frac{T(t)}{I_{\mathrm{sp}}\,g_0},
  \quad \forall\;t \in [t_0,\,t_f],
  \label{eq:P1_mdot}\\
  &\mathbf{r}(t_0) = \mathbf{r}_0,\quad
   \mathbf{v}(t_0) = \mathbf{v}_0,\quad
   m(t_0) = m_0,
  \label{eq:P1_ic}\\
  &\mathbf{r}(t_f) = \mathbf{r}_f,\quad
   \mathbf{v}(t_f) = \mathbf{v}_f,
  \label{eq:P1_tc}\\
  &0 \leq T(t) \leq T_{\max},
  \quad \forall\;t \in [t_0,\,t_f],
  \label{eq:P1_T}\\
  &\left\lVert\hat{\bm{i}}_{\theta}(t)\right\rVert_2^2 = 1,
  \quad \forall\;t \in [t_0,\,t_f],
  \label{eq:P1_ihat}\\
  &m_{\mathrm{dry}} \leq m(t) \leq m_0,
  \quad \forall\;t \in [t_0,\,t_f].
  \label{eq:P1_m}
\end{align}
Problem~1 is a continuous-time nonlinear optimal control problem.  Its nonconvexity arises from two sources. One is the nonlinear coupling between the thrust magnitude $T$ and the lander mass $m$ in the velocity dynamics \eqref{eq:P1_vdot} and mass flow equation \eqref{eq:P1_mdot}. The other is the quadratic equality constraint \eqref{eq:P1_ihat}, which requires $\hat{\bm{i}}_{\theta}$ to lie on the surface of the unit sphere rather than within the convex solid ball.  Traditional nonlinear programming approaches applied directly to this problem may suffer from slow convergence, sensitivity to initialization, and lack of convergence guarantees.  The methodology presented in this paper addresses these challenges by reformulating Problem~1 into a sequence of SOCPs through variable substitution, successive linearization, and lossless convex relaxation, while preserving full fidelity of the original dynamics and constraints.

\section{Convex Relaxation}
\label{sec:convex_relaxation}

This section develops a systematic convex reformulation of Problem~1 through three steps: a change of variables that linearizes the dynamics, a convex relaxation of the remaining nonconvex constraints, and a trapezoidal discretization that transcribes the continuous-time optimal control problem into a finite-dimensional convex program amenable to SOCP solvers. The idea follows \cite{acikmese2007convex} and other relevant literature.

\subsection{Change of Variables}
\label{subsec:cov}

The primary source of nonlinearity in Problem~1 is the $T/m$ term in the velocity equation~\eqref{eq:P1_vdot}, which couples the control $T$ and the state $m$.  To decouple this product, two new variables are introduced.  First, the specific thrust (thrust per unit mass) is defined as
\begin{equation}
  \tau \;=\; \frac{T}{m},
  \label{eq:tau_def}
\end{equation}
and second, the log-mass is introduced as
\begin{equation}
  w \;=\; \ln m.
  \label{eq:w_def}
\end{equation}
Differentiating \eqref{eq:w_def} with respect to time and substituting the mass flow equation~\eqref{eq:P1_mdot} yields
\begin{equation}
  \dot{w} \;=\; \frac{\dot{m}}{m}
          \;=\; -\frac{T}{m\,I_{\mathrm{sp}}\,g_0}
          \;=\; -\frac{\tau}{I_{\mathrm{sp}}\,g_0}.
  \label{eq:wdot}
\end{equation}
Under the substitution~\eqref{eq:tau_def}, the velocity equation~\eqref{eq:P1_vdot} becomes
\begin{equation}
  \dot{\mathbf{v}} \;=\; \tau\,\hat{\bm{i}}_{\theta} + \mathbf{g},
  \label{eq:vdot_tau}
\end{equation}
which is now bilinear in $\tau$ and $\hat{\bm{i}}_{\theta}$ rather than rational in $T$ and $m$.  To remove this remaining nonlinearity, a second change of variables is introduced by decomposing the specific thrust vector into its Cartesian components:
\begin{equation}
  \tau_1 \;=\; \tau\,i_{\theta x}, \qquad
  \tau_2 \;=\; \tau\,i_{\theta y}, \qquad
  \tau_3 \;=\; \tau\,i_{\theta z}.
  \label{eq:tau_components}
\end{equation}
With these definitions, the velocity dynamics~\eqref{eq:vdot_tau} become fully linear in the new variables $(\tau_1, \tau_2, \tau_3)$:
\begin{align}
  \dot{v}_x &= \tau_1,
  \label{eq:vxdot} \\
  \dot{v}_y &= \tau_2,
  \label{eq:vydot} \\
  \dot{v}_z &= \tau_3 - g.
  \label{eq:vzdot}
\end{align}
Collecting the transformed state vector
$$\mathbf{x} = [x,\,y,\,z,\,v_x,\,v_y,\,v_z,\,w]^{\mathsf{T}} \in \mathbb{R}^{7}$$
and the new control vector
$$\mathbf{u} = [\tau_1,\,\tau_2,\,\tau_3,\,\tau]^{\mathsf{T}} \in \mathbb{R}^{4},$$
the complete landing dynamics admit the linear time-invariant (LTI) state-space
representation
\begin{equation}
  \dot{\mathbf{x}} \;=\; A\mathbf{x} + B\mathbf{u} + \mathbf{b},
  \label{eq:lti}
\end{equation}
where the system matrices are
\begin{equation*}
  A =
  \begin{bmatrix}
    0 & 0 & 0 & 1 & 0 & 0 & 0 \\
    0 & 0 & 0 & 0 & 1 & 0 & 0 \\
    0 & 0 & 0 & 0 & 0 & 1 & 0 \\
    0 & 0 & 0 & 0 & 0 & 0 & 0 \\
    0 & 0 & 0 & 0 & 0 & 0 & 0 \\
    0 & 0 & 0 & 0 & 0 & 0 & 0 \\
    0 & 0 & 0 & 0 & 0 & 0 & 0
  \end{bmatrix},
\end{equation*}
\begin{equation*}
  B =
  \begin{bmatrix}
    0 & 0 & 0 & 0 \\
    0 & 0 & 0 & 0 \\
    0 & 0 & 0 & 0 \\
    1 & 0 & 0 & 0 \\
    0 & 1 & 0 & 0 \\
    0 & 0 & 1 & 0 \\
    0 & 0 & 0 & \!\!-\dfrac{1}{I_{\mathrm{sp}}\,g_0}
  \end{bmatrix}, \quad
  \mathbf{b} =
  \begin{bmatrix}
    0 \\ 0 \\ 0 \\ 0 \\ 0 \\ -g \\ 0
  \end{bmatrix}.
\end{equation*}
The LTI form~\eqref{eq:lti} is exact, i.e., no approximation has been made.
The change of variables has equivalently linearized the dynamics through an
algebraic transformation of the state and control.

Under the same transformation, the boundary conditions~\eqref{eq:P1_ic}
and~\eqref{eq:P1_tc} become
\begin{align}
  \mathbf{r}(t_0) = \mathbf{r}_0, \quad
  \mathbf{v}(t_0) = \mathbf{v}_0, \quad
  w(t_0) &= \ln m_0,
  \label{eq:bc_initial_new} \\
  \mathbf{r}(t_f) = \mathbf{r}_f, \quad
  \mathbf{v}(t_f) &= \mathbf{v}_f,
  \label{eq:bc_final_new}
\end{align}
the mass constraint~\eqref{eq:P1_m} becomes
\begin{equation}
  \ln m_{\mathrm{dry}} \;\leq\; w(t) \;\leq\; \ln m_0,
  \qquad \forall\; t \in [t_0,\,t_f],
  \label{eq:w_bound}
\end{equation}
and the energy objective~\eqref{eq:P1} becomes
\begin{equation}
  J \;=\; \int_{t_0}^{t_f} \tau^2(t)\;\mathrm{d}t,
  \label{eq:obj_new}
\end{equation}
since $\tau^2 = (T/m)^2$ and the minimum of $\int \tau^2\,\mathrm{d}t$ with respect
to the new variables is equivalent to the minimum of $\int T^2\,\mathrm{d}t$ when
the original thrust-to-mass coupling is restored.

However, two new constraints arise from the variable transformation.  The first links $\tau$
to $w$ through the original thrust bound $0 \leq T \leq T_{\max}$:
\begin{equation}
  0 \;\leq\; \tau(t) \;\leq\; T_{\max}\,e^{-w(t)},
  \qquad \forall\; t \in [t_0,\,t_f],
  \label{eq:tau_bound_nonlinear}
\end{equation}
since $T = \tau m = \tau e^{w}$ implies $\tau \leq T_{\max}/m = T_{\max}e^{-w}$.
The second constraint arises from the unit-vector
condition~\eqref{eq:P1_ihat} combined with the decomposition~\eqref{eq:tau_components}:
\begin{equation}
  \tau_1^2(t) + \tau_2^2(t) + \tau_3^2(t) \;=\; \tau^2(t),
  \qquad \forall\; t \in [t_0,\,t_f].
  \label{eq:cone_surface}
\end{equation}
Both~\eqref{eq:tau_bound_nonlinear} and~\eqref{eq:cone_surface} are nonconvex,
and their convexification is addressed in the following subsection.

Combining all of the above, the original Problem~1 is equivalently reformulated as:

\noindent\textbf{Problem 2} (Transformed Optimal Control Problem):
\begin{equation*}
  \min_{\mathbf{x}(\cdot),\,\mathbf{u}(\cdot)} \quad
  J \;=\; \int_{t_0}^{t_f} \tau^2(t)\;\mathrm{d}t
\end{equation*}
\textit{subject to}
\eqref{eq:lti},
\eqref{eq:bc_initial_new},
\eqref{eq:bc_final_new},
\eqref{eq:w_bound},
\eqref{eq:tau_bound_nonlinear},
\eqref{eq:cone_surface}.

\noindent
Problem~2 is equivalent to Problem~1 in the sense that any optimal solution to one can be mapped to an optimal solution of the other via the invertible transformations~\eqref{eq:tau_def} and~\eqref{eq:w_def}.  The key advantage is that the nonlinear dynamics of Problem~1 have been exactly replaced by the linear dynamics~\eqref{eq:lti}, at the cost of introducing two new nonconvex constraints.

\subsection{Convex Relaxation of Constraints}
\label{subsec:cvx_relax}

The remaining nonconvex constraints in Problem~2 are addressed through successive linearization of the thrust-to-mass bound and SOC relaxation of the thrust direction constraint.

The upper bound in constraint~\eqref{eq:tau_bound_nonlinear} is nonconvex because the function $T_{\max}e^{-w}$ is a nonlinear (convex) function of the state variable $w$.  Since $f(w) = T_{\max}e^{-w}$ is convex and smooth, it admits a global linear underestimate via its first-order Taylor expansion at any reference point $w^{*}(t)$:
\begin{equation}
  T_{\max}\,e^{-w} \;\approx\;
  T_{\max}\,e^{-w^{*}(t)}\!\bigl[1 - (w(t) - w^{*}(t))\bigr].
  \label{eq:taylor_expansion}
\end{equation}
Substituting this linearization into~\eqref{eq:tau_bound_nonlinear} yields the
successively linearized path constraint:
\begin{equation}
  0 \;\leq\; \tau(t) \;\leq\;
  T_{\max}\,e^{-w^{*}(t)}\!\bigl[1 - (w(t) - w^{*}(t))\bigr],
  \label{eq:tau_linearized}
\end{equation}
where $w^{*}(t)$ is the log-mass trajectory from the previous iteration of the SCP algorithm, or an initial guess on the first iteration.  Because $f(w) = T_{\max}e^{-w}$ is convex, its first-order
Taylor approximation is a global underestimate, meaning the linearized feasible set is always a subset of the original feasible set.  This ensures that any solution feasible for the linearized problem is also feasible for Problem~2 in terms of the thrust magnitude bound.  The quality of the linearization improves as the SCP iterates converge, becoming exact at the fixed point $w(t) = w^{*}(t)$.

The equality constraint~\eqref{eq:cone_surface} defines the surface of a cone in the control space $(\tau_1, \tau_2, \tau_3, \tau) \in \mathbb{R}^{4}$, which is a nonconvex set.  Following the cone-like surface-to-solid relaxation approach in \cite{wang2018minimum,wang2018optimization}, the equality is relaxed to the inequality:
\begin{equation}
  \tau_1^2(t) + \tau_2^2(t) + \tau_3^2(t) \;\leq\; \tau^2(t),
  \qquad \forall\; t \in [t_0,\,t_f],
  \label{eq:soc}
\end{equation}
which defines both the surface and interior of an SOC and is therefore a convex constraint.  In standard form, \eqref{eq:soc} is written as $\left\lVert[\tau_1,\,\tau_2,\,\tau_3]^{\mathsf{T}}\right\rVert_2 \leq \tau$, a typical SOC constraint directly supported by interior-point SOCP solvers.

The relaxation from equality~\eqref{eq:cone_surface} to inequality~\eqref{eq:soc} is lossless under mild conditions of the energy-optimal formulation.  Specifically, it can be shown via Pontryagin's minimum principle that at any optimal solution of the relaxed problem, the SOC constraint~\eqref{eq:soc} must be tight, i.e., $\tau_1^2 + \tau_2^2 + \tau_3^2 = \tau^2$ holds automatically.  This follows because the Lagrange multiplier associated with the cone constraint is strictly positive at the optimum, forcing complementary slackness to activate the equality.  Consequently, every optimal solution of the relaxed problem is also optimal and feasible for the original problem, and the two problems share the same optimal value.  The formal proof of losslessness is presented in
Section~\ref{sec:theoretical_analysis}.

Replacing constraint~\eqref{eq:tau_bound_nonlinear} with its successive linearization~\eqref{eq:tau_linearized} and constraint~\eqref{eq:cone_surface} with the SOC relaxation~\eqref{eq:soc}, Problem~2 is reformulated as:

\noindent\textbf{Problem 3} (Convex Optimal Control Problem):
\begin{equation*}
  \min_{\mathbf{x}(\cdot),\,\mathbf{u}(\cdot)} \quad
  J \;=\; \int_{t_0}^{t_f} \tau^2(t)\;\mathrm{d}t
\end{equation*}
\textit{subject to}
\eqref{eq:lti},
\eqref{eq:bc_initial_new},
\eqref{eq:bc_final_new},
\eqref{eq:w_bound},
\eqref{eq:tau_linearized},
\eqref{eq:soc}.

\noindent
Problem~3 is a convex optimal control problem: the objective~\eqref{eq:obj_new} is
a convex quadratic functional in $\tau$, the dynamics~\eqref{eq:lti} are linear,
the boundary conditions~\eqref{eq:bc_initial_new}--\eqref{eq:bc_final_new} are
affine, the log-mass bounds~\eqref{eq:w_bound} are linear, the thrust-to-mass
bound~\eqref{eq:tau_linearized} is linear in $(\tau, w)$, and the
cone constraint~\eqref{eq:soc} is an SOC inequality.
After discretization, Problem~3 becomes a finite-dimensional SOCP.

\subsection{Discretization}
\label{subsec:discretization}

The continuous-time Problem~3 is transcribed into a finite-dimensional
optimization problem using the trapezoidal rule \cite{wang2017constrained}, which provides second-order
accuracy in the step size and is well suited to the smooth, low-frequency
thrust profiles characteristic of energy-optimal solutions.

The time interval $[t_0, t_f]$ is uniformly partitioned into $N - 1$
sub-intervals with $N$ nodes $t_0 = t_1 < t_2 < \cdots < t_N = t_f$ and
constant step size
\begin{equation}
  \Delta t \;=\; \frac{t_f - t_0}{N - 1}.
  \label{eq:dt}
\end{equation}
The continuous state trajectory $\mathbf{x}(t)$ and control profile
$\mathbf{u}(t)$ are approximated by the sequences
$\{\mathbf{x}_i\}_{i=1}^{N}$ and $\{\mathbf{u}_i\}_{i=1}^{N}$, respectively,
where $\mathbf{x}_i \approx \mathbf{x}(t_i)$ and $\mathbf{u}_i \approx \mathbf{u}(t_i)$.
At each node $i$, the state and control vectors are
\begin{align}
  \mathbf{x}_i
  &= \bigl[x_i,\;y_i,\;z_i,\;v_{x,i},\;v_{y,i},\;v_{z,i},\;w_i\bigr]^{\mathsf{T}}
  \in \mathbb{R}^{7}, \label{eq:node_vars_x}\\
  \mathbf{u}_i
  &= \bigl[\tau_{1,i},\;\tau_{2,i},\;\tau_{3,i},\;\tau_i\bigr]^{\mathsf{T}}
  \in \mathbb{R}^{4}. \label{eq:node_vars_u}
\end{align}

Applying the trapezoidal rule to the LTI dynamics~\eqref{eq:lti} over the interval $[t_i, t_{i+1}]$ gives
\begin{equation}
  \mathbf{x}_{i+1} = \mathbf{x}_i
    + \frac{\Delta t}{2}
      \bigl[(A\mathbf{x}_i + B\mathbf{u}_i + \mathbf{b})
            + (A\mathbf{x}_{i+1} + B\mathbf{u}_{i+1} + \mathbf{b})\bigr]
\end{equation}
for $i = 1, \ldots, N-1$. Rearranging to isolate $\mathbf{x}_{i+1}$ on the left-hand side yields the linear equality constraint
\begin{equation}   \label{eq:trap_dyn}
\begin{split}
  \left(I - \frac{\Delta t}{2}A\right)\mathbf{x}_{i+1}
  - \left(I + \frac{\Delta t}{2}A\right)\mathbf{x}_i\\
  - \frac{\Delta t}{2}B(\mathbf{u}_i + \mathbf{u}_{i+1})
  - \Delta t\,\mathbf{b}
  \;=\; \mathbf{0}
\end{split}
\end{equation}
for $i = 1, \ldots, N-1$, where $I \in \mathbb{R}^{7 \times 7}$ denotes the identity matrix.  Since $A$ is strictly lower triangular in the relevant block structure, the matrix $I - \tfrac{\Delta t}{2}A$ is always invertible for any $\Delta t > 0$, so \eqref{eq:trap_dyn} uniquely defines $\mathbf{x}_{i+1}$ given $\mathbf{x}_i$, $\mathbf{u}_i$, and $\mathbf{u}_{i+1}$.

The boundary conditions~\eqref{eq:bc_initial_new} and~\eqref{eq:bc_final_new}
are enforced at the first and last nodes:
\begin{align}
  \mathbf{x}_1
  &= \bigl[\mathbf{r}_0^{\mathsf{T}},\;\mathbf{v}_0^{\mathsf{T}},\;\ln m_0\bigr]^{\mathsf{T}},
  \label{eq:disc_ic} \\
  \mathbf{x}_N\big|_{1:6}
  &= \bigl[\mathbf{r}_f^{\mathsf{T}},\;\mathbf{v}_f^{\mathsf{T}}\bigr]^{\mathsf{T}},
  \label{eq:disc_tc}
\end{align}
where the notation $\mathbf{x}_N|_{1:6}$ indicates that only the first six
components (position and velocity) of $\mathbf{x}_N$ are constrained; the seventh
component $w_N = \ln m(t_f)$ is left free to be optimized.

The log-mass bounds~\eqref{eq:w_bound}, the successively linearized thrust-to-mass
constraint~\eqref{eq:tau_linearized}, and the SOC
constraint~\eqref{eq:soc} are enforced pointwise at every node:
\begin{gather}
  \ln m_{\mathrm{dry}} \;\leq\; w_i \;\leq\; \ln m_0, \label{eq:disc_w} \\
  0 \;\leq\; \tau_i \;\leq\; T_{\max}\,e^{-w_i^{*}}\!\bigl[1 - (w_i - w_i^{*})\bigr], \label{eq:disc_tau_ub} \\
  \tau_{1,i}^2 + \tau_{2,i}^2 + \tau_{3,i}^2 \;\leq\; \tau_i^2,  \label{eq:disc_soc}
\end{gather}
for $i = 1, \ldots, N$, where $w_i^{*}$ denotes the reference log-mass at node $i$ from the previous SCP iteration.

The continuous energy objective~\eqref{eq:obj_new} is approximated by the
trapezoidal quadrature rule.  Since $\tau^2(t)$ is smooth and the
trapezoidal rule incurs a second-order error in $\Delta t$, the discrete
objective is
\begin{equation}
  J \;\approx\; \Delta t \sum_{i=1}^{N-1} \tau_i^2.
  \label{eq:disc_obj}
\end{equation}
Collecting all discretized components, the continuous Problem~3 is transcribed
into the following finite-dimensional optimization problem:

\noindent\textbf{Problem 4} (Discretized Convex Subproblem):
\begin{equation*}
  \min_{\{\mathbf{x}_i,\,\mathbf{u}_i\}_{i=1}^{N}} \quad
  J \;=\; \Delta t \sum_{i=1}^{N-1} \tau_i^2
\end{equation*}
\textit{subject to}
\eqref{eq:trap_dyn},
\eqref{eq:disc_ic},
\eqref{eq:disc_tc},
\eqref{eq:disc_w},
\eqref{eq:disc_tau_ub},
\eqref{eq:disc_soc}.

\noindent
Problem~4 is a convex quadratic program with SOC constraints and can be structured into a standard SOCP \cite{boyd2004convex}.  Its decision variables consist of
$N$ state vectors in $\mathbb{R}^{7}$ and $N$ control vectors in $\mathbb{R}^{4}$,
totalling $11N$ scalar variables.  The constraints comprise $7(N-1)$ linear
equality constraints from the trapezoidal dynamics~\eqref{eq:trap_dyn}, $6$
equality constraints from the boundary conditions~\eqref{eq:disc_ic}--\eqref{eq:disc_tc},
$2N$ linear inequality constraints from~\eqref{eq:disc_w}--\eqref{eq:disc_tau_ub},
and $N$ SOC constraints from~\eqref{eq:disc_soc}.

A new instance of Problem~4 is established and solved at each iteration of the SCP
algorithm, with the reference trajectory $\{w_i^{*}\}$ updated to the solution of
the previous iteration.  The SCP framework will be presented in Section~\ref{sec:scp}.

\section{Online Model Identification}
\label{sec:model_identification}

The problem formulation in Section~\ref{sec:problem_formulation} and convex relaxation developed in Section~\ref{sec:convex_relaxation}
assume that the landing dynamics are fully characterized by the nominal model
derived from a fixed set of physical parameters as shown in \eqref{eq:P1_rdot}--\eqref{eq:P1_mdot} or \eqref{eq:lti}.
In practice, however, the true dynamics encountered during flight may deviate
from this nominal model due to unmodeled gravitational perturbations, thrust
miscalibration, and propellant-mass measurement errors.  These deviations
cannot be fully characterized prior to the mission and must be identified
in real time from flight measurements if the guidance law is to remain
accurate throughout the descent.

This section develops an online model identification framework that runs in
parallel with the guidance loop.  At each step of the descent, the onboard
computer compares the measured lander state with the prediction of the
nominal model, attributes the discrepancy to an unknown additive perturbation
vector, and estimates this vector using a Recursive Least Squares (RLS) filter
with exponential forgetting, followed by an Exponential Moving Average (EMA)
smoother to suppress measurement noise before the estimate is injected into the
trajectory optimizer.  The identified perturbation is incorporated into the
dynamics model to be used by the SCP algorithm in \Cref{sec:scp}, yielding a corrected bias term
that adapts to the true flight environment at every
guidance cycle.

\subsection{Sources of Uncertainty}
\label{subsec:uncertainty}

In this work, three classes of uncertainty are considered, each of which manifests as an additive disturbance on one or more components of the
transformed state dynamics~\eqref{eq:lti}.

First, the nominal model assumes a constant, spatially uniform gravitational acceleration
$g = 1.6229\ \mathrm{m/s^{2}}$ directed along the negative $z$-axis.  The true
lunar gravitational field, however, is spatially non-uniform due to mass
concentration anomalies (mascons) and subsurface density variations that deviate
from a spherical-harmonic approximation.  As the lander descends along its
trajectory, it traverses regions of varying gravitational attraction that are
not captured by the nominal constant $g$.  These deviations appear as an additive
vector perturbation $\boldsymbol{\delta}_{g}(t) \in \mathbb{R}^{3}$ acting on
the velocity dynamics, so that the true acceleration in the three translational
channels becomes
\begin{equation}
  \dot{\mathbf{v}}(t)
  \;=\; \tau(t)\,\hat{\bm{i}}_{\theta}(t) + \mathbf{g}
        + \boldsymbol{\delta}_{g}(t).
  \label{eq:pert_grav}
\end{equation}
Two representative gravitational perturbation models are considered in the
numerical simulations in Section~\ref{sec:simulations}.  The first is a spatially constant mascon anomaly,
\begin{equation}
  \boldsymbol{\delta}_{g}(t)
  \;=\; \mathbf{d}_{\mathrm{c}}
  \;=\; [0.05,\; 0.05,\; 0.03]^{\mathsf{T}}\ \mathrm{m/s^{2}},
  \label{eq:pert_const}
\end{equation}
representing a uniform gravitational bias that might arise from a large mascon
directly beneath the descent corridor.  The second is a sinusoidal time-varying
anomaly,
\begin{equation}
  \boldsymbol{\delta}_{g}(t)
  \;=\;
  \begin{bmatrix}
    A_{\sin}\sin(\omega_{\sin} t) \\[2pt]
    A_{\sin}\sin(\omega_{\sin} t + \pi/4) \\[2pt]
    A_{\sin}\cos(\omega_{\sin} t)
  \end{bmatrix},
  \label{eq:pert_sin}
\end{equation}
with amplitude $A_{\sin} = 0.04\ \mathrm{m/s^{2}}$ and frequency
$\omega_{\sin} = 2\pi/25\ \mathrm{rad/s}$, modelling a spatially oscillating
anomaly whose period matches the descent time scale.  The phase offset $\pi/4$
between the $x$- and $y$-channels prevents the perturbation from being
degenerate in the horizontal plane.

Second, the onboard guidance computer commands a specific thrust vector
$[\tau_1,\,\tau_2,\,\tau_3,\,\tau]^{\mathsf{T}}$ based on the nominal
thrust-to-mass ratio $\tau = T/m$.  In practice, the actual specific force
delivered by the thruster may differ from the commanded value due to
miscalibrated thrust sensors, valve nonlinearities, or attitude control
cross-coupling.  These effects are modeled as a multiplicative thrust scale
error $\eta_T$, so that the effective specific force along each axis is
$(1 + \eta_T)\tau_j$ rather than the commanded $\tau_j$.  The resulting
additive perturbation on the velocity dynamics is
\begin{equation}
  \boldsymbol{\delta}_{T}(t)
  \;=\; \eta_T\,[\tau_1(t),\; \tau_2(t),\; \tau_3(t)]^{\mathsf{T}},
  \label{eq:pert_thrust}
\end{equation}
with $\eta_T = 0.04$ (a $4\%$ thrust scale bias) used in the simulations of this paper.

In addition, propellant gauging in microgravity is subject to measurement uncertainty
arising from fuel slosh, sensor drift, and the difficulty of measuring liquid
volume in a partially filled tank under varying accelerations.  If the
onboard estimate of the lander mass is $\hat{m}(t) = (1 + \eta_m)\,m(t)$
with fractional error $\eta_m$, then the commanded specific thrust
$\hat{\tau} = T/\hat{m}$ differs from the true specific thrust $\tau = T/m$,
and the log-mass evolution deviates from the nominal equation~\eqref{eq:wdot}.
The resulting perturbation on the log-mass channel is
\begin{equation}
  \delta_{w}(t)
  \;=\; -\frac{\eta_m\,\tau(t)}{I_{\mathrm{sp}}\,g_0},
  \label{eq:pert_mass}
\end{equation}
with $\eta_m = 0.05$ (a $5\%$ mass gauging error) in the simulations.

Collecting all three classes of uncertainty, the true discrete-time dynamics
of the lander at guidance step $k$ are
\begin{equation}  \label{eq:true_dyn}
\begin{split}
  \mathbf{x}_{k+1}
  \;=\; \left(I - \frac{\Delta t}{2}A\right)^{-1}
        \!\!\left[
          \left(I + \frac{\Delta t}{2}A\right)\mathbf{x}_{k} \right.\\
          \left.+ \frac{\Delta t}{2}B\bigl(\mathbf{u}_{k} + \mathbf{u}_{k+1}\bigr)
          + \Delta t\,(\mathbf{b} + \boldsymbol{\delta}_k)
        \right].
\end{split}
\end{equation}
where $A$ and $B$ are defined in~\eqref{eq:lti}, $\mathbf{b}$ is the nominal bias vector,
and $\boldsymbol{\delta}_{k} \in \mathbb{R}^{7}$ is the unknown additive
perturbation vector at step $k$, defined as
\begin{equation}
  \boldsymbol{\delta}_{k}
  \;=\;
  \bigl[\,\mathbf{0}_{3}^{\mathsf{T}},\;\;
        \boldsymbol{\delta}_{d,k}^{\mathsf{T}},\;\;
        \delta_{w,k}\,\bigr]^{\mathsf{T}},
  \label{eq:delta_def}
\end{equation}
in which $\boldsymbol{\delta}_{d,k} = \boldsymbol{\delta}_{g,k}
+ \boldsymbol{\delta}_{T,k} \in \mathbb{R}^{3}$ lumps the velocity-channel
perturbations from gravitational anomalies and thrust errors, while
$\delta_{w,k} \in \mathbb{R}$ captures the mass-flow measurement error.
The first three components of $\boldsymbol{\delta}_{k}$ are identically
zero because the position kinematics~\eqref{eq:P1_rdot} are exact and
unaffected by any of the three perturbation sources.


\subsection{Online Perturbation Estimation via RLS and EMA}
\label{subsec:rls}

At each guidance step $k$, the onboard navigation system delivers a noisy
measurement of the lander state:
\begin{equation}
  \mathbf{x}_{k}^{\mathrm{meas}}
  \;=\; \mathbf{x}_{k} + \boldsymbol{\eta}_{k},
  \qquad
  \boldsymbol{\eta}_{k}
  \;\overset{\mathrm{i.i.d.}}{\sim}\;
  \bigl(\mathbf{0},\, R_{\eta}\bigr),
  \label{eq:meas_model}
\end{equation}
where $\boldsymbol{\eta}_{k} \in \mathbb{R}^{7}$ is zero-mean navigation noise
with covariance $R_{\eta} = \mathrm{diag}(\sigma_{r}^{2}I_{3},\,
\sigma_{v}^{2}I_{3},\, \sigma_{w}^{2})$.  In the simulations, the noise
standard deviations are set to $\sigma_{r} = 0.5\ \mathrm{m}$,
$\sigma_{v} = 0.05\ \mathrm{m/s}$, and $\sigma_{w} = 0.001$.

The raw perturbation at step $k$ is estimated by comparing the actual
measured transition with the prediction of the nominal dynamics model.
Given the measured state $\mathbf{x}_{k}^{\mathrm{meas}}$, the commanded
control $\mathbf{u}_{k}$, and the previous measured state
$\mathbf{x}_{k-1}^{\mathrm{meas}}$, the nominal model predicts the next state as
\begin{equation}  \label{eq:pred}
\begin{split}
  \mathbf{x}_{k}^{\mathrm{pred}}
  \;=\; \left(I - \frac{\Delta t}{2}A\right)^{-1}
        \!\!\left[
          \left(I + \frac{\Delta t}{2}A\right)\mathbf{x}_{k-1}^{\mathrm{meas}} \right.\\
          \left.+ \frac{\Delta t}{2}B\bigl(\mathbf{u}_{k-1} + \mathbf{u}_{k}\bigr)
          + \Delta t\,\mathbf{b}
        \right].
\end{split}
\end{equation}
The raw perturbation estimate is then the scaled residual
\begin{equation}
  \hat{\boldsymbol{\delta}}_{k}^{\mathrm{raw}}
  \;=\; \frac{\mathbf{x}_{k}^{\mathrm{meas}}
              - \mathbf{x}_{k}^{\mathrm{pred}}}{\Delta t},
  \label{eq:raw_residual}
\end{equation}
where division by $\Delta t$ recovers the perturbation in units of
state per unit time, consistent with the additive bias interpretation
in~\eqref{eq:true_dyn}.  In the absence of perturbations and noise,
$\hat{\boldsymbol{\delta}}_{k}^{\mathrm{raw}} = \mathbf{0}$. Any nonzero
value directly reflects the combined effect of the true perturbation
$\boldsymbol{\delta}_{k-1}$ and the measurement noise $\boldsymbol{\eta}_{k}$.

A single residual measurement is noisy and may not be representative of the
current perturbation level.  A more reliable estimate is obtained by
recursively combining the current residual with the history of past
residuals, discounting older measurements to allow tracking of
time-varying perturbations.  This is achieved via a Recursive Least Squares
(RLS) filter with exponential forgetting factor $\lambda \in (0, 1)$ \cite{islam2019recursive}.

The RLS filter maintains a $7 \times 7$ covariance matrix
$P_{k} \in \mathbb{R}^{7 \times 7}$ and a perturbation estimate
$\hat{\boldsymbol{\delta}}_{k} \in \mathbb{R}^{7}$, initialized as
\begin{equation}
  \hat{\boldsymbol{\delta}}_{0} = \mathbf{0} \in \mathbb{R}^{7},
  \qquad
  P_{0} = p_{0}\,I_{7},
  \label{eq:rls_init}
\end{equation}
with $p_{0} = 10^{4}$, which encodes high initial uncertainty.
At each guidance step $k \geq 1$, the filter computes the
gain matrix, updates the estimate, and updates the covariance according to
\begin{align}
  K_{k}
  &\;=\; P_{k-1}\,\bigl(\lambda\,I_{7} + P_{k-1}\bigr)^{-1},
  \label{eq:rls_gain} \\[4pt]
  \hat{\boldsymbol{\delta}}_{k}
  &\;=\; \hat{\boldsymbol{\delta}}_{k-1}
         + K_{k}\,\bigl(\hat{\boldsymbol{\delta}}_{k}^{\mathrm{raw}}
                         - \hat{\boldsymbol{\delta}}_{k-1}\bigr),
  \label{eq:rls_update} \\[4pt]
  P_{k}
  &\;=\; \lambda^{-1}\,\bigl(I_{7} - K_{k}\bigr)\,P_{k-1}.
  \label{eq:rls_cov}
\end{align}
The forgetting factor $\lambda$ controls the balance between noise
rejection and tracking ability.  Values of $\lambda$ close to 1 assign nearly
equal weight to all past measurements and produce a smooth estimate that is
less responsive to rapid changes; values further from 1 discount older
measurements more aggressively, improving tracking of fast-varying
perturbations at the cost of increased noise sensitivity.

Before the RLS estimate is used to correct the dynamics model, an
Exponential Moving Average (EMA) smoother \cite{klinker2011exponential} is applied to further reduce
the influence of measurement noise on the trajectory optimizer.  The
EMA update at each step is
\begin{equation}
  \tilde{\boldsymbol{\delta}}_{k}
  \;=\; (1 - \alpha)\,\tilde{\boldsymbol{\delta}}_{k-1}
        + \alpha\,\hat{\boldsymbol{\delta}}_{k},
  \label{eq:ema}
\end{equation}
initialized with $\tilde{\boldsymbol{\delta}}_{0} = \mathbf{0}$, where
$\alpha$ is the EMA smoothing weight.  The EMA filter is a
first-order infinite impulse response filter with transfer function
$H(z) = \alpha z\,/\,(z - (1-\alpha))$, whose pole at $z = 1 - \alpha$
lies strictly inside the unit disk, guaranteeing bounded-input bounded-output
(BIBO) stability for any bounded input sequence.  The steady-state gain
$H(1) = 1$ ensures that the smoothed estimate $\tilde{\boldsymbol{\delta}}_{k}$
converges to the true perturbation in the absence of noise.

The EMA and RLS
filters run at every guidance step regardless of whether a trajectory
replan is triggered, ensuring that the perturbation estimate is as current
as possible when a replan does occur.

The complete online estimation procedure at each guidance step $k$ is
summarized in Algorithm~\ref{alg:rls}.

\begin{algorithm}[ht]
  \DontPrintSemicolon
  \SetAlgoLined
  \caption{Online Perturbation Estimation via RLS and EMA at Guidance Step $k$}
  \label{alg:rls}
  \KwIn{Previous measured state $\mathbf{x}_{k-1}^{\mathrm{meas}}$,
        commanded controls $\mathbf{u}_{k-1}$, $\mathbf{u}_{k}$,
        current RLS state $(\hat{\boldsymbol{\delta}}_{k-1},\, P_{k-1})$,
        current EMA state $\tilde{\boldsymbol{\delta}}_{k-1}$}
  \KwOut{Smoothed estimate $\tilde{\boldsymbol{\delta}}_{k}$,
         updated RLS state $(\hat{\boldsymbol{\delta}}_{k},\, P_{k})$}
  \BlankLine
  \quad \tcp{Step 1: Receive navigation measurement}
  \quad Obtain $\mathbf{x}_{k}^{\mathrm{meas}} = \mathbf{x}_{k} + \boldsymbol{\eta}_{k}$ from the navigation system\;
  \BlankLine
  \quad \tcp{Step 2: Compute nominal prediction and raw residual}
  \quad Compute $\mathbf{x}_{k}^{\mathrm{pred}}$ via \eqref{eq:pred}\;
  \quad Compute $\hat{\boldsymbol{\delta}}_{k}^{\mathrm{raw}}
           = (\mathbf{x}_{k}^{\mathrm{meas}}
              - \mathbf{x}_{k}^{\mathrm{pred}})\,/\,\Delta t$ via \eqref{eq:raw_residual}\;
  \BlankLine
  \quad \tcp{Step 3: RLS update}
  \quad $K_{k} \leftarrow P_{k-1}\,(\lambda\,I_{7} + P_{k-1})^{-1}$\;
  \quad $\hat{\boldsymbol{\delta}}_{k} \leftarrow
    \hat{\boldsymbol{\delta}}_{k-1}
    + K_{k}\,(\hat{\boldsymbol{\delta}}_{k}^{\mathrm{raw}}
               - \hat{\boldsymbol{\delta}}_{k-1})$\;
  \quad $P_{k} \leftarrow \lambda^{-1}(I_{7} - K_{k})\,P_{k-1}$\;
  \BlankLine
  \quad \tcp{Step 4: EMA smoothing}
  \quad $\tilde{\boldsymbol{\delta}}_{k} \leftarrow
    (1-\alpha)\,\tilde{\boldsymbol{\delta}}_{k-1}
    + \alpha\,\hat{\boldsymbol{\delta}}_{k}$\;
\end{algorithm}

\subsection{Augmented Linearized Dynamical Model}
\label{subsec:augmented_model}

Once the smoothed perturbation estimate $\tilde{\boldsymbol{\delta}}_{k}$
is available at guidance step $k$, it is used to correct the nominal
dynamics bias $\mathbf{b}$ before solving the next SOCP subproblem.

The corrected bias vector at step $k$ is defined as
\begin{equation}
  \tilde{\mathbf{b}}_{k}
  \;=\; \mathbf{b} + \tilde{\boldsymbol{\delta}}_{k},
  \label{eq:btilde}
\end{equation}
where $\mathbf{b} = [0,\,0,\,0,\,0,\,0,\,-g,\,0]^{\mathsf{T}}$ is the
nominal bias from~\eqref{eq:lti} and $\tilde{\boldsymbol{\delta}}_{k}$ is
the EMA-smoothed RLS estimate from~\eqref{eq:ema}.  Since the first three
components of $\boldsymbol{\delta}_{k}$ are zero by construction~\eqref{eq:delta_def},
only the velocity channels (components 4--6) and the log-mass channel
(component 7) of $\tilde{\mathbf{b}}_{k}$ differ from $\mathbf{b}$.
As the estimation error $\left\lVert\tilde{\boldsymbol{\delta}}_{k}
- \boldsymbol{\delta}_{k}\right\rVert$ decreases over time,
$\tilde{\mathbf{b}}_{k}$ converges toward the true bias
$\mathbf{b} + \boldsymbol{\delta}_{k}$.

The corrected bias $\tilde{\mathbf{b}}_{k}$ replaces the nominal $\mathbf{b}$
in the trapezoidal dynamics constraint~\eqref{eq:trap_dyn}.
The augmented dynamics constraint for the reduced-horizon SOCP subproblem
solved at guidance step $k$ is therefore
\begin{equation}  \label{eq:aug_trap_dyn}
\begin{split}
  \left(I - \frac{\Delta t_{k}}{2}A\right)\mathbf{x}_{i+1}
  - \left(I + \frac{\Delta t_{k}}{2}A\right)\mathbf{x}_{i}\\
  - \frac{\Delta t_{k}}{2}B\,(\mathbf{u}_{i} + \mathbf{u}_{i+1})
  - \Delta t_{k}\,\tilde{\mathbf{b}}_{k}
  \;=\; \mathbf{0},
\end{split}
\end{equation}
for $i = 1, \ldots, N_{k}-1$, where $\Delta t_{k} = (t_{f} - t_{k})\,/\,(N_{k} - 1)$ is the step size
of the reduced planning horizon at step $k$, and $N_{k}$
is the reduced-horizon node count used for online replans.
All other constraints in Problem~4, including the log-mass bounds~\eqref{eq:disc_w},
the successively linearized thrust-to-mass bound~\eqref{eq:disc_tau_ub},
and the SOC constraint~\eqref{eq:disc_soc}, remain structurally unchanged.
Only the right-hand side of the dynamics equality is modified through
$\tilde{\mathbf{b}}_{k}$.  This is the key architectural feature of the
proposed method: the online identification is decoupled from the convex
structure of the SOCP, so the subproblem retains all its convexity and
feasibility properties regardless of the estimated perturbation magnitude.

At every guidance step or every few guidance steps, a new SOCP subproblem
of the form of Problem~4 is constructed using the augmented bias
$\tilde{\mathbf{b}}_{k}$ and the current measured state
$\mathbf{x}_{k}^{\mathrm{meas}}$ as the initial condition.  The remaining
flight time $t_{f} - t_{k}$ is resampled to a number of
uniformly spaced nodes by linear interpolation of the current plan,
and the previous plan serves as a warm start.
Between consecutive replanning events, the RLS filter and EMA smoother
continue to run at every guidance step so that the perturbation estimate
remains current.  This frequency-gated replanning strategy ensures that the online scheme remains computationally tractable while still maintaining a sufficiently rapid response to the evolving perturbation estimate.

The augmented SOCP subproblem at guidance step $k$ is:

\noindent\textbf{Problem 5} (Online-Corrected Reduced-Horizon SOCP at step $k$):
\begin{equation*}
  \min_{\{\mathbf{x}_{i},\,\mathbf{u}_{i}\}_{i=1}^{N_{k}}}
  \quad
  J_{k} \;=\; \Delta t_{k}\sum_{i=1}^{N_{k}-1}\tau_{i}^{2}
\end{equation*}
\textit{subject to} \eqref{eq:aug_trap_dyn} and
\begin{alignat}{2}
  &\mathbf{x}_{1}\big|_{1:6}
   = \bigl[\mathbf{r}_{k}^{\mathsf{T}},\;\mathbf{v}_{k}^{\mathsf{T}}\bigr]^{\mathsf{T}},
   \quad x_{1,7} = w_{k}^{\mathrm{meas}},
  \label{eq:P5_ic}\\
  &\mathbf{x}_{N_{k}}\big|_{1:6}
   = \bigl[\mathbf{r}_{f}^{\mathsf{T}},\;\mathbf{v}_{f}^{\mathsf{T}}\bigr]^{\mathsf{T}},
  \label{eq:P5_tc}\\
  &\ln m_{\mathrm{dry}} \leq x_{i,7} \leq \ln m_{0},
   i = 1,\ldots,N_{k},\label{eq:P5_w}\\
  &0 \leq \tau_{i}
   \leq T_{\max}\,e^{-w_{i}^{*}}\!\bigl[1-(x_{i,7}-w_{i}^{*})\bigr],
   i = 1,\ldots,N_{k},\label{eq:P5_tau}\\
  &\left\lVert[\tau_{1,i},\,\tau_{2,i},\,\tau_{3,i}]^{\mathsf{T}}\right\rVert_{2}
   \leq \tau_{i},
   i = 1,\ldots,N_{k}.\label{eq:P5_soc}
\end{alignat}
where $\mathbf{r}_{k}$, $\mathbf{v}_{k}$, and $w_{k}^{\mathrm{meas}}$ are
the position, velocity, and log-mass components of the current measured state
$\mathbf{x}_{k}^{\mathrm{meas}}$.  Problem~5 has the same convex structure
as Problem~4 and is solved to global optimality using interior-point
solvers.  The solution $\{\mathbf{x}_{i}^{*},\mathbf{u}_{i}^{*}\}$
is interpolated back onto the full $N$-node time grid to produce the updated
planned trajectory for the subsequent guidance steps until the next replan.
The complete online SCP algorithm that integrates the identification and
replanning steps is presented in Section~\ref{sec:scp}.

\section{Sequential Convex Programming}
\label{sec:scp}

The convex relaxation developed in Section~\ref{sec:convex_relaxation} transforms
the nonconvex lunar landing trajectory optimization Problem~1 into the convex
subproblem Problem~4, which can be solved efficiently to global optimality
by an interior-point SOCP solver.  However, the successive linearization of
the thrust-to-mass constraint~\eqref{eq:disc_tau_ub} is performed around
a reference log-mass trajectory $\{w_{i}^{*}\}$ that must be updated
iteratively, since the linearization is exact only at the true solution.
SCP provides the iterative framework that
coordinates this update. SCP solves a sequence of convex subproblems, each
constructed around the solution of the previous iteration, until convergence
to a fixed point that satisfies the Karush--Kuhn--Tucker (KKT) conditions of the original
Problem~1.

In the nominal setting (no environmental uncertainty), a single SCP
run at the beginning of the mission suffices to compute the optimal trajectory.  In the online setting developed in Section~\ref{sec:model_identification}, however, the identified perturbation $\tilde{\boldsymbol{\delta}}_{k}$ modifies the effective bias
of the dynamics model at every guidance step $k$, so the SOCP subproblem
must be re-solved in flight to maintain trajectory optimality under the
updated model.  This section presents the online SCP algorithm with model
identification, which combines the perturbation estimation of
Section~\ref{sec:model_identification} with receding-horizon replanning to produce a closed-loop guidance scheme that adapts to unknown and time-varying disturbances during
the powered descent.

\subsection{Baseline SCP Algorithm}
\label{subsec:baseline_scp}

Before developing the online extension, the baseline SCP algorithm
is presented for reference, as its structure is inherited and extended by the
online version.

To start with, an initial reference trajectory
$\{\mathbf{x}_{i}^{(0)},\,\mathbf{u}_{i}^{(0)}\}_{i=1}^{N}$
is required to construct the first linearization of the thrust-to-mass
constraint.  Formally, any trajectory satisfying the boundary conditions
\eqref{eq:disc_ic}--\eqref{eq:disc_tc} and the log-mass
bounds~\eqref{eq:disc_w} may serve as a valid initialization.
In practice, the straight-line interpolation between the boundary conditions
can be used, defined at each node $i$ as
\begin{equation}
  \mathbf{x}_{i}^{(0)}
  \;=\;
  \begin{bmatrix}
    \dfrac{(N-i)\,\mathbf{r}_{0} + (i-1)\,\mathbf{r}_{f}}{N-1} \\[6pt]
    \dfrac{(N-i)\,\mathbf{v}_{0} + (i-1)\,\mathbf{v}_{f}}{N-1} \\[6pt]
    \ln m_{0}
  \end{bmatrix},
  i = 1, \ldots, N,
  \label{eq:init_traj}
\end{equation}
and the initial control is set to a constant hover-equivalent thrust aligned
with the positive $z$-axis: $\mathbf{u}_{i}^{(0)} = [0,\,0,\,g,\,g]^{\mathsf{T}}$
for all $i$.  Despite the crude nature of this initialization (the
straight-line guess may violate the dynamics constraints significantly), the
SCP algorithm converges reliably within a small number of iterations, because
the only non-equivalent approximation in the problem (the successive
linearization of $T_{\max}e^{-w}$) is mild when the log-mass variation is
small.  The iteration counter is set to $\ell = 0$.

At each SCP iteration $\ell \geq 0$, the current reference log-mass sequence
$\{w_{i}^{(\ell)}\}_{i=1}^{N}$ is extracted from $\{\mathbf{x}_{i}^{(\ell)}\}$
as the seventh component.  A Problem~4$^{(\ell)}$ is then constructed
and solved with the successively linearized thrust-to-mass bound
\begin{equation}
  0 \;\leq\; \tau_{i}
  \;\leq\;
  T_{\max}\,e^{-w_{i}^{(\ell)}}\!\left[1 - \bigl(x_{i,7} - w_{i}^{(\ell)}\bigr)\right],
  \label{eq:base_tau_ub}
\end{equation}
for $i = 1, \ldots, N$, where $x_{i,7}$ denotes the seventh (log-mass) component of the decision variable $\mathbf{x}_{i}$.  The subproblem is solved to global optimality
by an interior-point SOCP solver, yielding the optimal solution
$\{\mathbf{x}_{i}^{*(\ell)},\,\mathbf{u}_{i}^{*(\ell)}\}_{i=1}^{N}$.

Then, the reference trajectory is updated to the solution of the current
subproblem:
\begin{equation}
  \bigl\{\mathbf{x}_{i}^{(\ell+1)},\,\mathbf{u}_{i}^{(\ell+1)}\bigr\}
  \;\leftarrow\;
  \bigl\{\mathbf{x}_{i}^{*(\ell)},\,\mathbf{u}_{i}^{*(\ell)}\bigr\}.
  \label{eq:scp_update}
\end{equation}
Convergence is declared when the maximum componentwise change in the state
trajectory between successive iterates falls below a prescribed tolerance
$\boldsymbol{\varepsilon} \in \mathbb{R}^{7}$:
\begin{equation}
  \max_{i = 1,\ldots,N}
  \left\lVert
    \mathbf{x}_{i}^{(\ell+1)} - \mathbf{x}_{i}^{(\ell)}
  \right\rVert_{\infty}
  \;<\; \varepsilon_{j},
  \quad j = 1, \ldots, 7,
  \label{eq:conv_crit}
\end{equation}
where $\varepsilon_{j}$ is a prescribed tolerance for each state channel.  If the
criterion~\eqref{eq:conv_crit} is not satisfied, the iteration counter is
incremented to $\ell + 1$ and the process returns to
constructing and solving a new Problem~4$^{(\ell)}$.  Upon convergence, the solution
$\{\mathbf{x}_{i}^{*},\,\mathbf{u}_{i}^{*}\}
= \{\mathbf{x}_{i}^{(\ell+1)},\,\mathbf{u}_{i}^{(\ell+1)}\}$
is accepted as an approximate solution to Problem~1.
The baseline SCP algorithm is summarized in Algorithm~\ref{alg:baseline_scp}.

\begin{algorithm}[ht]
  \DontPrintSemicolon
  \SetAlgoLined
  \caption{Baseline SCP Algorithm}
  \label{alg:baseline_scp}
  \KwIn{System matrices $A$, $B$, $\mathbf{b}$; boundary conditions
        $\mathbf{r}_{0}$, $\mathbf{v}_{0}$, $m_{0}$, $\mathbf{r}_{f}$,
        $\mathbf{v}_{f}$; parameters $N$, $\Delta t$, $T_{\max}$,
        $m_{\mathrm{dry}}$; tolerance $\boldsymbol{\varepsilon}$;
        maximum iterations $\ell_{\max}$}
  \KwOut{Optimal state sequence $\{\mathbf{x}_{i}^{*}\}_{i=1}^{N}$ and
         control sequence $\{\mathbf{u}_{i}^{*}\}_{i=1}^{N}$}
  \BlankLine
  \quad \tcp{Step 1: Initialization}
  \quad Construct $\{\mathbf{x}_{i}^{(0)},\,\mathbf{u}_{i}^{(0)}\}_{i=1}^{N}$
  via \eqref{eq:init_traj}\;
  \quad Set $\ell \leftarrow 0$\;
  \BlankLine
  \Repeat{convergence criterion \eqref{eq:conv_crit} satisfied
          \textbf{or} $\ell = \ell_{\max}$}{
    \quad \tcp{Step 2: Extract linearization point and build subproblem}
    \quad Set $w_{i}^{(\ell)} \leftarrow x_{i,7}^{(\ell)}$ for $i = 1, \ldots, N$\;
    \quad Formulate Problem~4$^{(\ell)}$ using constraint~\eqref{eq:base_tau_ub}\;
    \BlankLine
    \quad \tcp{Step 3: Solve convex subproblem}
    \quad Solve Problem~4$^{(\ell)}$ to obtain
    $\{\mathbf{x}_{i}^{*(\ell)},\,\mathbf{u}_{i}^{*(\ell)}\}$\;
    \BlankLine
    \quad \tcp{Step 4: Update reference trajectory}
    \quad $\{\mathbf{x}_{i}^{(\ell+1)},\,\mathbf{u}_{i}^{(\ell+1)}\}
     \leftarrow
     \{\mathbf{x}_{i}^{*(\ell)},\,\mathbf{u}_{i}^{*(\ell)}\}$\;
    \quad $\ell \leftarrow \ell + 1$\;
  }
  \BlankLine
  \KwRet{$\{\mathbf{x}_{i}^{*},\,\mathbf{u}_{i}^{*}\}
          \leftarrow
          \{\mathbf{x}_{i}^{(\ell)},\,\mathbf{u}_{i}^{(\ell)}\}$}
\end{algorithm}

\subsection{Online SCP Algorithm with Model Identification}
\label{subsec:online_scp}

The online extension of the baseline SCP algorithm integrates the
real-time perturbation estimation of Section~\ref{sec:model_identification} into a
receding-horizon replanning loop.  The central modification is the
replacement of the nominal dynamics bias $\mathbf{b}$ by the
online-corrected bias $\tilde{\mathbf{b}}_{k} = \mathbf{b}
+ \tilde{\boldsymbol{\delta}}_{k}$ in every SOCP subproblem solved
during flight.  This makes the trajectory plan consistent with the
best available model of the true flight environment at each guidance
cycle.

The algorithm proceeds through four stages at every guidance step $k$,
which are described in detail below and consolidated in
Algorithm~\ref{alg:online_scp}.

\subsubsection{Stage 1: Initialization}
\label{subsubsec:online_init}

Before the descent begins, the baseline SCP (Algorithm~\ref{alg:baseline_scp})
is executed using the nominal bias $\mathbf{b}$ to produce an
initial nominal plan
$\{{\mathbf{x}}_{i}^{\mathrm{nom}},\,{\mathbf{u}}_{i}^{\mathrm{nom}}\}_{i=1}^{N}$.
This plan serves as the starting planned
trajectory:
\begin{equation}
  \mathbf{X}^{\mathrm{plan}} \leftarrow \mathbf{X}^{\mathrm{nom}},
  \qquad
  \mathbf{U}^{\mathrm{plan}} \leftarrow \mathbf{U}^{\mathrm{nom}}.
  \label{eq:online_init}
\end{equation}
The RLS filter state and EMA smoother are initialized as
\begin{equation}
  \hat{\boldsymbol{\delta}}_{0} = \mathbf{0},
  \qquad
  P_{0} = p_{0}\,I_{7},
  \qquad
  \tilde{\boldsymbol{\delta}}_{0} = \mathbf{0},
  \label{eq:online_filter_init}
\end{equation}
with $p_{0} = 10^{4}$.  The guidance step counter is set to $k = 1$.

\subsubsection{Stage 2: State Measurement and Perturbation Estimation}
\label{subsubsec:online_meas}

At guidance step $k$, the planned control $\mathbf{u}_{k}^{\mathrm{plan}}$
is applied to the lander.  The true state evolves according to the
perturbed dynamics~\eqref{eq:true_dyn} with the unknown perturbation
$\boldsymbol{\delta}_{k}$.  The navigation system then delivers the
noisy measurement $\mathbf{x}_{k+1}^{\mathrm{meas}}$ according to the
model~\eqref{eq:meas_model}.

Using $\mathbf{x}_{k+1}^{\mathrm{meas}}$, $\mathbf{u}_{k}^{\mathrm{plan}}$,
and $\mathbf{u}_{k+1}^{\mathrm{plan}}$, Algorithm~\ref{alg:rls} of
Section~\ref{sec:model_identification} is executed to update the perturbation estimate.
The outputs are the updated RLS estimate
$\hat{\boldsymbol{\delta}}_{k+1}$, covariance $P_{k+1}$, and
EMA-smoothed estimate $\tilde{\boldsymbol{\delta}}_{k+1}$.
The corrected bias is then formed as
\begin{equation}
  \tilde{\mathbf{b}}_{k+1}
  \;=\; \mathbf{b} + \tilde{\boldsymbol{\delta}}_{k+1}.
  \label{eq:btilde_online}
\end{equation}
This estimation stage runs at every guidance step $k$, regardless
of whether a trajectory replan is triggered.

\subsubsection{Stage 3: Frequency-Gated Trajectory Replanning}
\label{subsubsec:online_replan}

A trajectory replan is triggered at guidance step $k$ if and only if
$k$ is a multiple of the replanning cadence $n_{\mathrm{replan}}$.
This frequency-gating strategy reduces the total number of SOCP solves
from $N - 1$ (one per step) to approximately
$(N - 1)/n_{\mathrm{replan}}$ over the full descent, while
the RLS filter continues to accumulate measurements between replans,
so the perturbation estimate is well-converged by the time each
replan is executed.

When a replan is triggered, a reduced-horizon SOCP of the form of
Problem~5 is constructed and solved as follows.

\textit{Horizon construction.}
The remaining flight time at step $k+1$ is $t_{\mathrm{rem}} = t_{f}
- t_{k+1}$.  This is partitioned into $N_{\mathrm{plan}}$
uniformly spaced planning nodes with step size
\begin{equation}
  \Delta t_{k} \;=\; \frac{t_{\mathrm{rem}}}{N_{\mathrm{plan}} - 1},
  \label{eq:dt_k}
\end{equation}
producing the reduced planning time grid
$\{t_{k+1},\, t_{k+1} + \Delta t_{k},\,\ldots,\, t_{f}\}$.
A fixed node count $N_{\mathrm{plan}}$ can be used regardless of how many
steps remain to ensure that every online SOCP has the same size
with the same number of decision variables, so the
solver execution time is bounded and predictable across all replanning
events.

\textit{Warm start.}
The current full-horizon plan $\{\mathbf{X}^{\mathrm{plan}},
\mathbf{U}^{\mathrm{plan}}\}$ is resampled onto the reduced
planning grid $\{\mathbf{X}^{\mathrm{warm}},
\mathbf{U}^{\mathrm{warm}}\}$ by linear interpolation.
The first node of $\mathbf{X}^{\mathrm{warm}}$ is then overridden by the
current measured state to ensure the replanned trajectory originates from
the true current position:
\begin{equation}
  \mathbf{X}^{\mathrm{warm}}_{1} \;\leftarrow\; \mathbf{x}_{k+1}^{\mathrm{meas}}.
  \label{eq:warm_anchor}
\end{equation}
Using the previous plan as a warm start exploits the fact that
consecutive replanning problems differ only slightly from one another,
so the warm-started solver typically converges in very few inner SCP iterations.

\textit{Inner SCP loop.}
Starting from the warm-started reference trajectory
$\{\mathbf{X}^{(0)}, \mathbf{U}^{(0)}\} = \{\mathbf{X}^{\mathrm{warm}},
\mathbf{U}^{\mathrm{warm}}\}$, the inner SCP loop solves a sequence
of convex subproblems of the form of Problem~5, each constructed
using the augmented bias $\tilde{\mathbf{b}}_{k+1}$ and the
linearization point $\{w_{i}^{(\ell)}\}$ from the previous inner
iteration.  At inner iteration $\ell$, the subproblem is:

\medskip
\noindent\textbf{Problem 5$^{(\ell)}$}
(Online-Corrected Subproblem at Inner Iteration $\ell$ of Guidance Step $k$):
\begin{equation}
  \min_{\{\mathbf{x}_{i},\,\mathbf{u}_{i}\}_{i=1}^{N_{\mathrm{plan}}}}
  \quad
  J_{k}^{(\ell)}
  \;=\; \Delta t_{k} \sum_{i=1}^{N_{\mathrm{plan}}-1} \tau_{i}^{2}
  \label{eq:online_obj}
\end{equation}
\textit{subject to}
\begin{align}
    \begin{split}
    &\left(I - \frac{\Delta t_{k}}{2}A\right)\mathbf{x}_{i+1}
   - \left(I + \frac{\Delta t_{k}}{2}A\right)\mathbf{x}_{i} \\
    &\quad - \frac{\Delta t_{k}}{2}B(\mathbf{u}_{i} + \mathbf{u}_{i+1})
   - \Delta t_{k}\,\tilde{\mathbf{b}}_{k+1}
   = \mathbf{0}, \\
    &\quad i = 1,\ldots,N_{\mathrm{plan}}-1,
    \end{split} \label{eq:online_dyn} \\
  &\mathbf{x}_{1}\big|_{1:6}
   = \bigl[\mathbf{r}_{k+1}^{\mathsf{T}},\;\mathbf{v}_{k+1}^{\mathsf{T}}\bigr]^{\mathsf{T}},
   \quad x_{1,7} = w_{k+1}^{\mathrm{meas}},
  \label{eq:online_ic}\\
  &\mathbf{x}_{N_{\mathrm{plan}}}\big|_{1:6}
   = \bigl[\mathbf{r}_{f}^{\mathsf{T}},\;\mathbf{v}_{f}^{\mathsf{T}}\bigr]^{\mathsf{T}},
  \label{eq:online_tc}\\
  &\ln m_{\mathrm{dry}} \;\leq\; x_{i,7} \;\leq\; \ln m_{0},
  \quad i = 1,\ldots,N_{\mathrm{plan}},
  \label{eq:online_w}\\
    \begin{split}
    &0 \;\leq\; \tau_{i}
   \;\leq\; T_{\max}\,e^{-w_{i}^{(\ell)}}
             \!\left[1 - \bigl(x_{i,7} - w_{i}^{(\ell)}\bigr)\right], \\
    &\quad i = 1,\ldots,N_{\mathrm{plan}},
    \end{split} \label{eq:online_tau}\\
  &\left\lVert[\tau_{1,i},\,\tau_{2,i},\,\tau_{3,i}]^{\mathsf{T}}\right\rVert_{2}
   \;\leq\; \tau_{i},
  \quad i = 1,\ldots,N_{\mathrm{plan}}.
  \label{eq:online_soc}
\end{align}
Problem~5$^{(\ell)}$ is solved to obtain
$\{\mathbf{x}_{i}^{*(\ell)},\,\mathbf{u}_{i}^{*(\ell)}\}$.
The inner SCP update and convergence check follow the same logic as
the baseline: the reference is updated to
$\{\mathbf{x}_{i}^{(\ell+1)},\,\mathbf{u}_{i}^{(\ell+1)}\}
\leftarrow \{\mathbf{x}_{i}^{*(\ell)},\,\mathbf{u}_{i}^{*(\ell)}\}$
and convergence is checked against the same criterion~\eqref{eq:conv_crit}.
The inner loop terminates after a maximum number of
iterations or upon satisfaction of~\eqref{eq:conv_crit}, whichever
occurs first.

The key difference between Problem~5$^{(\ell)}$ and the baseline
Problem~4$^{(\ell)}$ is the appearance of $\tilde{\mathbf{b}}_{k+1}$
in place of $\mathbf{b}$ in the dynamics constraint~\eqref{eq:online_dyn},
and the use of the reduced step size $\Delta t_{k}$ and node count
$N_{\mathrm{plan}}$.  The convex structure of the subproblem is identical to that of Problem~4$^{(\ell)}$ and is
preserved unconditionally regardless of the value of
$\tilde{\mathbf{b}}_{k+1}$.

\textit{Plan update.}
Upon completion of the inner SCP loop, the converged reduced-horizon plan
$\{\mathbf{x}_{i}^{*},\,\mathbf{u}_{i}^{*}\}_{i=1}^{N_{\mathrm{plan}}}$
is interpolated back onto the original $N$-node full-horizon time grid
by linear interpolation over the remaining nodes $k+1, \ldots, N$.
This ensures that the full-horizon plan remains consistent with the
latest replan for all future guidance steps until the next replanning
event.

\subsubsection{Stage 4: Guidance Step Advance and Termination}
\label{subsubsec:online_advance}

After Stage~3 (or after the estimation update in Stage~2 if no replan
is triggered), the guidance step counter is advanced to $k + 1$ and
the algorithm returns to Stage~2.  The descent terminates when either
$k = N - 1$ (the final planned node is reached) or the horizon is too short to support a further replan (e.g., the remaining
flight time $t_{\mathrm{rem}} < 2\Delta t$), at which point the last available planned
control is applied and the lander executes touchdown.

\subsection{Complete Online SCP Algorithm}
\label{subsec:online_scp_alg}

The four stages described above are consolidated in
Algorithm~\ref{alg:online_scp}.
It is not difficult to identify the differences between the
baseline SCP (Algorithm~\ref{alg:baseline_scp}) and the online SCP
(Algorithm~\ref{alg:online_scp}), since the two algorithms share
the same convex subproblem structure and differ only in how the
dynamics model is specified.
However, the online algorithm introduces three elements absent from the baseline: (i) a closed-loop measurement--estimation--correction
cycle that runs at every guidance step; (ii) a receding-horizon
architecture in which the initial condition of the SOCP is updated
to the current measured state at each replanning event; and
(iii) a modified dynamics constraint in which the nominal bias is replaced by the adaptively corrected bias.  All other aspects (the decision variables, the objective function, the
SOC and log-mass constraints, and the convergence criterion) of the two algorithms are identical.


The most important structural observation is that replacing $\mathbf{b}$
with $\tilde{\mathbf{b}}_{k}$ does not alter the convex structure of
the SOCP in any way.  The corrected bias appears exclusively as a
constant right-hand-side term in the linear equality
constraint~\eqref{eq:online_dyn}; it does not appear in the objective,
the SOC constraint, or the log-mass or thrust-magnitude bounds.
Consequently, the feasibility, convexity, and global optimality
properties of the SOCP subproblem are preserved regardless of the
magnitude or direction of $\tilde{\boldsymbol{\delta}}_{k}$.  This
decoupling between the identification layer and the optimization layer
is the key architectural property that enables the online algorithm
to inherit all theoretical guarantees of the baseline SCP.

\begin{algorithm}[ht]
  \DontPrintSemicolon
  \SetAlgoLined
  \caption{Online SCP Algorithm with Model Identification}
  \label{alg:online_scp}
  \KwIn{Nominal plan $\{\mathbf{X}^{\mathrm{nom}}, \mathbf{U}^{\mathrm{nom}}\}$;
        system matrices $A$, $B$, $\mathbf{b}$;
        filter parameters $\lambda$, $p_{0}$, $\alpha$;
        replanning cadence $n_{\mathrm{replan}}$;
        reduced horizon $N_{\mathrm{plan}}$;
        inner iteration limit $\ell_{\max}^{\mathrm{on}}$;
        convergence tolerance $\boldsymbol{\varepsilon}$}
  \KwOut{True state sequence $\{\mathbf{x}_{k}^{\mathrm{true}}\}_{k=1}^{N}$;
         planned control sequence $\{\mathbf{U}^{\mathrm{plan}}\}$}
  \BlankLine
  \quad \tcp{Stage 1: Initialization}
  \quad $\mathbf{X}^{\mathrm{plan}} \leftarrow \mathbf{X}^{\mathrm{nom}}$,\;
  \quad $\mathbf{U}^{\mathrm{plan}} \leftarrow \mathbf{U}^{\mathrm{nom}}$,\;
  \quad $\hat{\boldsymbol{\delta}}_{0} \leftarrow \mathbf{0}$,\;
  \quad $P_{0} \leftarrow p_{0} I_{7}$,\;
  \quad $\tilde{\boldsymbol{\delta}}_{0} \leftarrow \mathbf{0}$\;
  \quad $\mathbf{x}_{1}^{\mathrm{true}} \leftarrow [\mathbf{r}_{0}^{\mathsf{T}},\,
   \mathbf{v}_{0}^{\mathsf{T}},\, \ln m_{0}]^{\mathsf{T}}$\;
  \BlankLine
  \For{$k = 1, 2, \ldots, N-1$}{
    \quad \tcp{Stage 2: State measurement and perturbation estimation}
    \quad Apply $\mathbf{u}_{k}^{\mathrm{plan}}$ to the lander\;
    \quad True state advances via \eqref{eq:true_dyn} \;
    \quad Receive measurement: $\mathbf{x}_{k+1}^{\mathrm{meas}}
      \leftarrow \mathbf{x}_{k+1}^{\mathrm{true}} + \boldsymbol{\eta}_{k}$ via \eqref{eq:meas_model}\;
    \quad Execute Algorithm~\ref{alg:rls} to obtain
    $\tilde{\boldsymbol{\delta}}_{k+1}$,
    $\hat{\boldsymbol{\delta}}_{k+1}$, $P_{k+1}$\;
    \quad Form corrected bias:
    $\tilde{\mathbf{b}}_{k+1} \leftarrow \mathbf{b}
     + \tilde{\boldsymbol{\delta}}_{k+1}$\;
    \BlankLine
    \quad \tcp{Stage 3: Frequency-gated trajectory replanning}
    \If{$k \bmod n_{\mathrm{replan}} = 0$
        \textbf{and} $t_{f} - t_{k+1} \geq 2\Delta t$}{
      \quad Compute $\Delta t_{k}$, $N_{\mathrm{plan}}$, planning grid
      via \eqref{eq:dt_k}\;
      \quad Interpolate warm start
      $\{\mathbf{X}^{\mathrm{warm}}, \mathbf{U}^{\mathrm{warm}}\}$\;
      \quad Override $\mathbf{X}^{\mathrm{warm}}_{1} \leftarrow
        \mathbf{x}_{k+1}^{\mathrm{meas}}$ via \eqref{eq:warm_anchor}\;
      \quad Set $\ell \leftarrow 0$,
      $\{\mathbf{x}_{i}^{(0)},\mathbf{u}_{i}^{(0)}\}
       \leftarrow \{\mathbf{X}^{\mathrm{warm}}, \mathbf{U}^{\mathrm{warm}}\}$\;
      \Repeat{criterion \eqref{eq:conv_crit} satisfied
              \textbf{or} $\ell = \ell_{\max}^{\mathrm{on}}$}{
        \quad $w_{i}^{(\ell)} \leftarrow x_{i,7}^{(\ell)}$,
        $i = 1,\ldots,N_{\mathrm{plan}}$\;
        \quad Solve Problem~5$^{(\ell)}$ $\Rightarrow$
        $\{\mathbf{x}_{i}^{*(\ell)}, \mathbf{u}_{i}^{*(\ell)}\}$\;
        \quad $\{\mathbf{x}_{i}^{(\ell+1)}, \mathbf{u}_{i}^{(\ell+1)}\}
         \leftarrow \{\mathbf{x}_{i}^{*(\ell)}, \mathbf{u}_{i}^{*(\ell)}\}$\;
        \quad $\ell \leftarrow \ell + 1$\;
      }
      \quad Update full-horizon plan through linear interpolation\;
    }
    \BlankLine
    \quad \tcp{Stage 4: Advance guidance step}
  }
\end{algorithm}

\section{Theoretical Analysis}
\label{sec:theoretical_analysis}

This section explores some theoretical foundations underlying the convex
relaxation of Section~\ref{sec:convex_relaxation}, the online model identification
of Section~\ref{sec:model_identification}, and the SCP algorithms of
Section~\ref{sec:scp}. Specifically, five main results are developed: the losslessness of the
SOC relaxation; the
stability and convergence of the RLS
and EMA filters used for online perturbation
estimation; the guaranteed convergence of the SCP iteration to a
KKT point of the original nonconvex problem; the
radius of convergence within which this guarantee holds; and the convergence
rate at which the SCP iterates approach the optimal solution.
These analyses are motivated by the results in \cite{acikmese2007convex,mao2018successive,haykin2008adaptive}, and customized for the lunar landing problem in this paper.

\subsection{Lossless Convex Relaxation}
\label{subsec:lossless}

The convex relaxation introduced in Section~\ref{sec:convex_relaxation} replaces
the nonconvex unit-norm equality constraint~\eqref{eq:cone_surface} with the
convex SOC inequality~\eqref{eq:soc}.  I will show that the relaxation introduces no optimality gap, i.e., any optimal solution
of the relaxed Problem~3 automatically satisfies the original equality
constraint, so the two problems share the same optimal value.

\begin{assumption}
\label{assump:zero_lower_thrust}
The minimum thrust magnitude is zero, that is, $T_{\min} = 0$ in the thrust
bound~\eqref{eq:thrust_bound}, so the engine may be commanded off at any
instant.
\end{assumption}

\begin{assumption}
\label{assump:control_affine}
The velocity dynamics are affine in the decomposed thrust vector
$[\tau_{1},\,\tau_{2},\,\tau_{3}]^{\mathsf{T}}$, as given
in~\eqref{eq:vxdot}--\eqref{eq:vzdot}, with no additional dependence of the
dynamics on the thrust direction beyond this affine term.
\end{assumption}

\begin{assumption}
\label{assump:direction_free_cost}
The objective functional~\eqref{eq:obj_new} depends on the control only
through the thrust magnitude $\tau$, and not on the thrust direction
$\hat{\bm{i}}_{\theta}$.
\end{assumption}

\begin{assumption}
\label{assump:relaxed_feasible}
Problem~3 admits at least one feasible trajectory.
\end{assumption}

\begin{lemma}
\label{lemma:direction_recovery}
Suppose $(\tau_{1}^{*}, \tau_{2}^{*}, \tau_{3}^{*}, \tau^{*})$ is feasible for
the relaxed constraint~\eqref{eq:soc} and the inequality is strict, i.e.,
$(\tau_{1}^{*})^{2} + (\tau_{2}^{*})^{2} + (\tau_{3}^{*})^{2} < (\tau^{*})^{2}$,
on a sub-interval within $[t_{0}, t_{f}]$.  Then there exists an
alternative thrust decomposition
$(\tilde{\tau}_{1}, \tilde{\tau}_{2}, \tilde{\tau}_{3}, \tau^{*})$, sharing
the same thrust magnitude $\tau^{*}$ and the same objective value, that
satisfies the original equality constraint~\eqref{eq:cone_surface}
everywhere on $[t_{0}, t_{f}]$.
\end{lemma}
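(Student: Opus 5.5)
The plan is a direct pointwise construction on the sub-interval where the cone constraint~\eqref{eq:soc} is slack, followed by a check that everything depending only on $\tau$ is unchanged. Let $\mathcal{S}\subseteq[t_0,t_f]$ denote the set of times at which $(\tau_1^*)^2+(\tau_2^*)^2+(\tau_3^*)^2<(\tau^*)^2$, and write $\bm{\tau}^*_{1:3}(t)=[\tau_1^*,\tau_2^*,\tau_3^*]^{\mathsf{T}}$. Off $\mathcal{S}$ I keep $\tilde{\tau}_j=\tau_j^*$, because there the equality~\eqref{eq:cone_surface} already holds. On $\mathcal{S}$ I define
\begin{equation*}
  [\tilde{\tau}_1,\tilde{\tau}_2,\tilde{\tau}_3]^{\mathsf{T}}
  =
  \begin{cases}
    \tau^*\,\bm{\tau}^*_{1:3}/\lVert\bm{\tau}^*_{1:3}\rVert_2, & \bm{\tau}^*_{1:3}\neq\mathbf{0},\\[2pt]
    [0,\,0,\,\tau^*]^{\mathsf{T}}, & \bm{\tau}^*_{1:3}=\mathbf{0}.
  \end{cases}
\end{equation*}
That is, I radially project the direction onto the cone surface, and use the upward vertical as a default direction when the horizontal-plus-vertical components vanish. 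By construction $\lVert[\tilde{\tau}_1,\tilde{\tau}_2,\tilde{\tau}_3]\rVert_2=\tau^*$ for every $t$. Since $\tau^*\ge 0$ by~\eqref{eq:tau_linearized}, this holds even where $\tau^*=0$, where the new vector is simply zero. Hence~\eqref{eq:cone_surface} holds on all of $[t_0,t_f]$.

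Next I verify the invariants. The magnitude component is untouched, so $\tau^*$ is shared. By Assumption~\ref{assump:direction_free_cost}, the objective~\eqref{eq:obj_new} depends only on $\tau$, so $J$ is unchanged. I will also note that the log-mass dynamics $\dot w=-\tau/(I_{\mathrm{sp}}g_0)$ from~\eqref{eq:wdot} depend only on $\tau$. Consequently $w(t)$, the bounds~\eqref{eq:w_bound}, and the linearized thrust bound~\eqref{eq:tau_linearized} all remain satisfied. For admissibility I must check that the new control is measurable. The map $(a,s)\mapsto s\,a/\lVert a\rVert$ is continuous on $\{a\neq 0\}$, and the case split is over the Borel set $\{\bm{\tau}^*_{1:3}=0\}$, so the composition with measurable $\bm{\tau}^*$ is measurable. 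Boundedness follows from $\lVert\tilde{\bm{\tau}}\rVert=\tau^*\le T_{\max}e^{-w}$.

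The main subtlety, which I will state explicitly rather than hide, is the velocity channel. By Assumption~\ref{assump:control_affine}, changing $\bm{\tau}_{1:3}$ on a set of positive measure alters $\mathbf{v}(t)$ and $\mathbf{r}(t)$. So the lemma as stated, which asserts only the shared magnitude and the shared objective value, holds, but the new trajectory need not meet the terminal conditions~\eqref{eq:bc_final_new}. I therefore intend to present the lemma as the pointwise direction-recovery step only. The subsequent losslessness theorem must then show, via Pontryagin's principle or the KKT multipliers, that $\mathcal{S}$ has measure zero at an optimum. This avoids needing feasibility of the rescaled trajectory. If one instead wants feasibility preserved, I would first try exploiting Assumption~\ref{assump:zero_lower_thrust}: at an optimum, slack on $\mathcal{S}$ lets one reduce $\tau$ to $\lVert\bm{\tau}_{1:3}\rVert$ without changing the dynamics, strictly lowering $J$, which gives a contradiction. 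That is the argument I expect the following theorem to use, with the present lemma serving as its constructive complement.
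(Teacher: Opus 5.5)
Your proposal is correct and follows the paper's proof essentially verbatim: on the slack set you rescale $[\tau_1^*,\tau_2^*,\tau_3^*]^{\mathsf{T}}$ radially to norm $\tau^*$, elsewhere you leave it unchanged, and you use Assumption~\ref{assump:direction_free_cost} to conclude the objective is unchanged. Your default vertical direction for the case $[\tau_1^*,\tau_2^*,\tau_3^*]^{\mathsf{T}}=\mathbf{0}$ with $\tau^*>0$ repairs a division by zero that the paper's formula~\eqref{eq:rescale} leaves unhandled. Your caveat is also accurate: the rescaled control changes $\mathbf{v}$ and $\mathbf{r}$, so terminal feasibility is not preserved, which is consistent with the lemma claiming only equal magnitude and objective.
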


\begin{proof}
Let $S \subseteq [t_{0}, t_{f}]$ denote the set on which the SOC
inequality is strict.  At every $t \in S$, the magnitude $\tau^{*}(t)$ must
be strictly positive, since otherwise the inequality
$(\tau_{1}^{*})^{2} + (\tau_{2}^{*})^{2} + (\tau_{3}^{*})^{2} \le (\tau^{*})^{2} = 0$
forces $\tau_{1}^{*} = \tau_{2}^{*} = \tau_{3}^{*} = 0$, at which point
equality already holds, contradicting $t \in S$.  For $t \in S$, define the
rescaled components
\begin{equation}
  \tilde{\tau}_{j}(t)
  \;=\; \tau^{*}(t)\,
        \frac{\tau_{j}^{*}(t)}
             {\left\lVert[\tau_{1}^{*}(t),\,\tau_{2}^{*}(t),\,\tau_{3}^{*}(t)]^{\mathsf{T}}\right\rVert_{2}},
  j = 1, 2, 3,
  \label{eq:rescale}
\end{equation}
and set $\tilde{\tau}_{j}(t) = \tau_{j}^{*}(t)$ for $t \notin S$.  By
construction, $\tilde{\tau}_{1}^{2} + \tilde{\tau}_{2}^{2} + \tilde{\tau}_{3}^{2}
= (\tau^{*})^{2}$ holds at every $t \in [t_{0}, t_{f}]$, so the equality
constraint~\eqref{eq:cone_surface} is satisfied exactly.  The
rescaling~\eqref{eq:rescale} preserves the direction of the original thrust
vector, $\tilde{\tau}_{j}/\tau^{*} = \tau_{j}^{*}/\lVert[\tau_{1}^{*},\tau_{2}^{*},\tau_{3}^{*}]^{\mathsf{T}}\rVert_{2}$,
so the unit thrust-direction vector is unchanged.  Because the objective
functional depends only on $\tau$ by Assumption~\ref{assump:direction_free_cost},
and $\tau^{*}$ is unchanged, the objective value is identical for the two
solutions.  This establishes the claim.
\end{proof}

\begin{theorem}
\label{thm:lossless}
Under Assumptions~\ref{assump:zero_lower_thrust}--\ref{assump:relaxed_feasible},
the optimal value of the relaxed Problem~3 equals the optimal value of the
original Problem~2.  Moreover, every optimal solution of Problem~3 satisfies
the equality constraint~\eqref{eq:cone_surface} pointwise, so no further
recovery step of the kind described in Lemma~\ref{lemma:direction_recovery}
is in fact necessary at the optimum.
\end{theorem}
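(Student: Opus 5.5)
The plan is to prove the two claims in order: first that the optimal value of the relaxed problem is no larger than that of Problem~2, then the reverse inequality via pointwise tightness of~\eqref{eq:soc} at any relaxed optimum. Throughout I compare Problem~2 with its relaxation in which only~\eqref{eq:cone_surface} is replaced by~\eqref{eq:soc}. The thrust-to-mass bound is treated identically in both problems: either both use the exact bound~\eqref{eq:tau_bound_nonlinear}, or both use the same linearization~\eqref{eq:tau_linearized} about a fixed reference $w^{*}$. The argument below does not depend on which of the two forms is used.

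\textbf{The easy inequality.} Any trajectory satisfying~\eqref{eq:cone_surface} satisfies~\eqref{eq:soc}, and all other constraints are shared. So the feasible set of Problem~2 is contained in that of Problem~3, and $J_3^{*}\le J_2^{*}$. Assumption~\ref{assump:relaxed_feasible} makes $J_3^{*}$ finite. Existence of a minimizer will be taken from the strict convexity and coercivity of $\int\tau^2$ over the closed convex feasible set, with weak compactness in $L^2$.

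\textbf{Tightness at the optimum.} Let $(\mathbf{x}^{*},\mathbf{u}^{*})$ be optimal for Problem~3, and suppose the inequality in~\eqref{eq:soc} is strict on a set $S$ of positive measure. As in the proof of Lemma~\ref{lemma:direction_recovery}, $\tau^{*}>0$ on $S$. I would build a competitor that keeps $(\tau_1^{*},\tau_2^{*},\tau_3^{*})$ and replaces $\tau^{*}$ on $S$ by
\[
\tau'=\tau^{*}-\epsilon\bigl(\tau^{*}-\lVert[\tau_1^{*},\tau_2^{*},\tau_3^{*}]^{\mathsf{T}}\rVert_2\bigr),\qquad \epsilon\in(0,1].
\]
By Assumption~\ref{assump:control_affine}, position and velocity depend only on $(\tau_1,\tau_2,\tau_3)$, so they are unchanged and the boundary conditions~\eqref{eq:bc_initial_new}--\eqref{eq:bc_final_new} still hold. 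The constraint~\eqref{eq:soc} still holds, and the lower bound $\tau'\ge 0$ is clear. Because $\dot w=-\tau/(I_{\mathrm{sp}}g_0)$, the new log-mass satisfies $w'\ge w^{*}$ and $w'(t_0)=\ln m_0$. Since $\tau'\ge0$, the mass bounds~\eqref{eq:w_bound} continue to hold. By Assumption~\ref{assump:direction_free_cost} the cost changes by $\int_S(\tau'^2-\tau^{*2})\,dt<0$, which would contradict optimality.

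\textbf{Main obstacle: the thrust upper bound.} The one remaining constraint to verify is the upper bound $\tau\le T_{\max}e^{-w}$ (or its linearization). Its right-hand side decreases when $w$ increases, so raising $w$ could in principle violate it at later times where it is active. I would handle this in two steps.
\begin{itemize}
\item On $S$ itself, $\tau'$ is reduced by an amount of order $\epsilon\,(\tau^{*}-\lVert\cdot\rVert)$. After $S$, the increase in $w$ is bounded by $\epsilon\int_S(\tau^{*}-\lVert\cdot\rVert)/(I_{\mathrm{sp}}g_0)$.
\item Where the bound is active after $S$, I would compensate by a small reduction of $\tau$ there as well, which only lowers the cost, and by proportionally shrinking the whole vector $(\tau_1,\tau_2,\tau_3,\tau)$ on a short window. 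The velocity and position endpoint changes caused by this shrinkage would be repaired with the slack created on $S$, which lets the thrust vector grow in length at no increase in $\tau$. This shows the competitor can be made feasible to first order in $\epsilon$ with strictly smaller cost.
\end{itemize}

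If this bookkeeping becomes unwieldy, the fallback is the Pontryagin route sketched after~\eqref{eq:soc}. Pointwise minimization of the Hamiltonian in $(\tau_1,\tau_2,\tau_3,\tau)$ gives $(\tau_1,\tau_2,\tau_3)$ aligned with $-\boldsymbol{\lambda}_v$. I would then show that the multiplier of~\eqref{eq:soc} equals $\lVert\boldsymbol{\lambda}_v\rVert$, which is positive unless $\boldsymbol{\lambda}_v\equiv 0$. The costate $\boldsymbol{\lambda}_v$ is affine in time because $\boldsymbol{\lambda}_r$ is constant. So it vanishes identically only in the abnormal case, which I would exclude using the terminal velocity constraint. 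Complementary slackness with a positive multiplier then forces equality in~\eqref{eq:soc}.

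\textbf{Conclusion.} Either route shows that every optimal solution of Problem~3 satisfies~\eqref{eq:cone_surface} pointwise, apart from a null set that can be fixed using Lemma~\ref{lemma:direction_recovery}. Each such solution is therefore feasible for Problem~2, which gives $J_2^{*}\le J_3^{*}$ and hence equality of the optimal values.
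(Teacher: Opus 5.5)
Your main route is an exchange argument rather than the paper's Pontryagin argument, and it is the more robust of the two. It confronts the real obstacle: removing wasted $\tau$ raises $w$ and tightens the thrust bound later. The paper's proof never sees this, because its step ``$\mu=0$ makes $H$ unbounded below'' tacitly assumes $\lambda_v\neq 0$ and ignores that the cone itself bounds $\tau_j$. Your decision to use the same thrust bound in both problems is also necessary, since the paper's closing inclusion argument overlooks that \eqref{eq:tau_linearized} is stricter than \eqref{eq:tau_bound_nonlinear}.

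The genuine gap is your claim that the argument does not depend on which bound is used. Take the reference $w^{*}$ in \eqref{eq:tau_linearized}. Its right-hand side vanishes at $w=w^{*}+1$, so together with $\tau\ge 0$ it imposes a hidden state constraint $w(t)\le w^{*}(t)+1$. Suppose this is active at some $t_1$ later than part of $S$. Then your competitor has $w(t_1)>w^{*}(t_1)+1$, which is infeasible. No shrinking at or after $t_1$ can fix this, because $w(t_1)$ depends only on $\tau$ before $t_1$ and $\tau$ cannot go below zero. In this regime the conclusion itself fails. Suppose the hidden constraint forces more burn than the maneuver needs. For example, the ballistic arc from $(\mathbf{r}_0,\mathbf{v}_0)$ meets $(\mathbf{r}_f,\mathbf{v}_f)$ at $t_f$, while a steeply decreasing reference with large $T_{\max}$ forces a positive burn. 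Then whenever $([\tau_1,\tau_2,\tau_3],\tau^{\mathrm{opt}})$ is optimal for Problem~3, so is $(0,0,0,\tau^{\mathrm{opt}})$, because $w$ and the bounds depend only on $\tau$. That solution is slack wherever $\tau^{\mathrm{opt}}>0$. So you need the extra hypothesis that the bound's right-hand side stays strictly positive along the optimum. This holds automatically for $T_{\max}e^{-w}$, which is the case your argument actually proves.

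Two further steps need repair. First, ``feasible to first order in $\epsilon$'' does not contradict optimality; you need an exactly feasible competitor, and under the positivity hypothesis one exists. Define $\tau_{\mathrm{new}}=\min\{\tau',\,T_{\max}e^{-w_{\mathrm{new}}}\}$ forward in time, with $\tau'=\tau^{\mathrm{opt}}$ off $S$. Shrink $[\tau_1,\tau_2,\tau_3]$ only where $\tau_{\mathrm{new}}$ falls below its norm. Gronwall then gives $w_{\mathrm{new}}-w^{\mathrm{opt}}=O(\epsilon)$ and an $O(\epsilon)$ endpoint error, since the bound is Lipschitz in $w$. The active set after $S$ need not be a short window, but this construction handles it wherever it lies. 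Then cancel the endpoint error exactly with a correction $\mathbf{a}+\mathbf{b}(t_f-s)$ to $[\tau_1,\tau_2,\tau_3]$, supported on $S'=\{t\in S:\ \sigma(t)\ge\sigma_0\}$. Here $\sigma=\tau^{\mathrm{opt}}-\lVert[\tau_1^{\mathrm{opt}},\tau_2^{\mathrm{opt}},\tau_3^{\mathrm{opt}}]^{\mathsf{T}}\rVert_2$ and $\sigma_0>0$ is small enough that $S'$ has positive measure. The Gram matrix of $1$ and $t_f-s$ on $S'$ is nonsingular, so $\mathbf{a},\mathbf{b}=O(\epsilon)$ fit inside the remaining slack. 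The correction leaves $\tau$, hence $w$ and the thrust bound, untouched, because $\dot w$ does not involve $\tau_1,\tau_2,\tau_3$.

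Second, in your fallback, $\lambda_v\equiv 0$ is not confined to abnormal extremals. When the ballistic arc hits the target, $\tau\equiv 0$ is optimal and its extremal is normal with $\lambda_v\equiv 0$. What you must show is that $\lambda_v\equiv 0$ forces $\tau\equiv 0$; with the exact bound this goes as follows. In that case $\tau$ minimizes $\tau^{2}-\lambda_w\tau/(I_{\mathrm{sp}}g_0)$ over $[0,T_{\max}e^{-w}]$. The costate $\lambda_w$ is nonincreasing, since its only forcing is $-\nu T_{\max}e^{-w}$ with $\nu\ge 0$ the thrust-bound multiplier. Also $\lambda_w(t_f)\le 0$, because $w$ is monotone and the only terminal condition on it is $w(t_f)\ge\ln m_{\mathrm{dry}}$. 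Stationarity, however, requires $\lambda_w\ge 2I_{\mathrm{sp}}g_0T_{\max}/m_0$ on the active set. Hence $\nu=0$ a.e., $\lambda_w\le 0$, and $\tau\equiv 0$, so the cone is trivially tight. The terminal velocity constraint plays no role, and abnormality is harmless whenever $\lambda_v\not\equiv 0$.
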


\begin{proof}
The proof proceeds via Pontryagin's minimum principle applied to the
relaxed problem.  Adjoin the SOC inequality~\eqref{eq:soc} to the Hamiltonian
with a nonnegative multiplier $\mu(t) \geq 0$:
\begin{equation}  \label{eq:hamiltonian_relaxed}
\begin{split}
  H \;=\; &\lambda_{r}^{\mathsf{T}}\mathbf{v}
        + \lambda_{v}^{\mathsf{T}}
          \bigl([\tau_{1},\,\tau_{2},\,\tau_{3}]^{\mathsf{T}} + \mathbf{g}\bigr)
        - \lambda_{w}\,\frac{\tau}{I_{\mathrm{sp}}\,g_{0}}\\
        &+ \tau^{2}
        - \mu\,\bigl(\tau^{2} - \tau_{1}^{2} - \tau_{2}^{2} - \tau_{3}^{2}\bigr),
\end{split}
\end{equation}
where $\lambda_{r}$, $\lambda_{v}$, and $\lambda_{w}$ are the costate
variables associated with position, velocity, and log-mass, respectively.

Minimizing $H$ over the thrust components $(\tau_{1}, \tau_{2}, \tau_{3})$ with $\tau$ held fixed
gives the first-order condition
\begin{equation}\label{eq:tauj_star}
\begin{split}
  \frac{\partial H}{\partial \tau_{j}}
  \;&=\; \lambda_{v,j} + 2\mu\,\tau_{j} \;=\; 0\\
  \quad\Longrightarrow\quad
  \tau_{j}^{*} \;&=\; -\,\frac{\lambda_{v,j}}{2\mu},
  \qquad j = 1, 2, 3.
\end{split}
\end{equation}
If $\mu = 0$, then $\partial H/\partial \tau_{j} = \lambda_{v,j}$ is constant
in $\tau_{j}$ and the Hamiltonian is unbounded below as $\tau_{j} \to \pm\infty$
whenever $\lambda_{v,j} \neq 0$, contradicting the existence of a bounded
optimal control.  Hence $\mu > 0$ at every optimal solution.  By
complementary slackness, $\mu\,(\tau^{2} - \tau_{1}^{2} - \tau_{2}^{2}
- \tau_{3}^{2}) = 0$, and since $\mu > 0$ this forces
$\tau_{1}^{2} + \tau_{2}^{2} + \tau_{3}^{2} = \tau^{2}$ identically.  The SOC
constraint is therefore tight at every optimal solution, establishing that
the original equality~\eqref{eq:cone_surface} holds automatically.

With the optimal direction $\hat{\bm{i}}_{\theta}^{*}
= -\lambda_{v}/\lVert\lambda_{v}\rVert_{2}$ recovered from~\eqref{eq:tauj_star},
minimizing $H$ over $\tau$ yields a scalar quadratic whose unconstrained
minimizer is
\begin{equation}
  \tau^{*}
  \;=\; \frac{1}{2}\left(
          \lambda_{v}^{\mathsf{T}}\hat{\bm{i}}_{\theta}^{*}
          + \frac{\lambda_{w}}{I_{\mathrm{sp}}\,g_{0}}
        \right),
  \label{eq:tau_star_pmp}
\end{equation}
clamped to $[0, T_{\max}\,e^{-w}]$ by the (linearized) thrust bound.

Since we have shown that every optimal solution of the relaxed problem
satisfies the original equality constraint, every optimal solution of
Problem~3 is feasible for Problem~2, and conversely every feasible solution
of Problem~2 is feasible for Problem~3 (since the equality
constraint~\eqref{eq:cone_surface} implies the inequality~\eqref{eq:soc}).
The two feasible sets therefore have the same set of optimal solutions, and
the optimal objective values coincide.
\end{proof}

\begin{remark}
\label{remark:lossless_failure}
The losslessness result of Theorem~\ref{thm:lossless} relies critically on
Assumption~\ref{assump:zero_lower_thrust}.  If a strictly positive minimum
thrust $T_{\min} > 0$ were imposed, the multiplier analysis above would
need to account for the possibility that the lower thrust bound, rather than
the SOC constraint, is active, and the relaxation could in principle leave a
nonzero gap between the relaxed and original optimal values.  Similarly, if
additional constraints were imposed directly on the thrust direction
$\hat{\bm{i}}_{\theta}$ (e.g., a maximum gimbal angle relative to the
body axis) or if the objective depended explicitly on the thrust direction
(violating Assumption~\ref{assump:direction_free_cost}), the optimal
direction in~\eqref{eq:tauj_star} would no longer be guaranteed to saturate
the SOC constraint, and the relaxation would require separate justification.
\end{remark}

\subsection{Stability and Convergence of Online Model Identification}
\label{subsec:online_id_theory}

The online perturbation estimator of Section~\ref{sec:model_identification} combines an RLS
filter with exponential forgetting and an EMA smoother.  This subsection
establishes that the resulting estimate converges, in a mean-square sense,
to the true perturbation, and that the EMA post-processing stage is
bounded-input bounded-output (BIBO) stable.

\begin{assumption}
\label{assump:bounded_pert}
The perturbation is bounded, i.e., there exists a finite constant $\delta_{\max}$ such that the true
perturbation vector at every guidance step satisfies
$\lVert\boldsymbol{\delta}_{k}\rVert_{2} \leq \delta_{\max}$ for all $k$.
\end{assumption}

\begin{assumption}
\label{assump:bounded_drift}
The drift is bounded, i.e., there exists a finite constant $\nu_{\max}$ such that
$\lVert\boldsymbol{\delta}_{k+1} - \boldsymbol{\delta}_{k}\rVert_{2}
\leq \nu_{\max}$ for all $k$.  The case $\nu_{\max} = 0$ corresponds to a
perturbation that is constant in time.
\end{assumption}

\begin{assumption}
\label{assump:bounded_noise}
The navigation or measurement noise $\boldsymbol{\eta}_{k}$ introduced in~\eqref{eq:meas_model}
is zero-mean, independent across guidance steps, with isotropic covariance
$R_{\eta} = \sigma^{2}\,I_{7}$ and bounded support,
$\lVert\boldsymbol{\eta}_{k}\rVert_{2} \leq \eta_{\max}$ almost surely.
\end{assumption}

\begin{assumption}
\label{assump:filter_params}
The RLS forgetting factor satisfies $\lambda \in (0, 1)$ and the EMA
smoothing weight satisfies $\alpha \in (0, 1)$.
\end{assumption}

\begin{lemma}
\label{lemma:cov_bound}
Under Assumption~\ref{assump:filter_params}, the RLS covariance matrix
defined by the recursion~\eqref{eq:rls_cov}, initialized as
$P_{0} = p_{0} I_{7}$, is bounded and satisfies the two-sided bound
\begin{equation}
  (1 - \lambda)\,I_{7}
  \;\preceq\;
  P_{k}
  \;\preceq\;
  I_{7},
  \qquad k \geq 1,
  \label{eq:cov_bound}
\end{equation}
where $\preceq$ denotes the positive semidefinite partial order.
\end{lemma}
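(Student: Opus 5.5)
The plan is to exploit the fact that the recursion~\eqref{eq:rls_gain}--\eqref{eq:rls_cov} preserves the scalar-identity structure of the initialization $P_{0}=p_{0}I_{7}$. This reduces the matrix claim~\eqref{eq:cov_bound} to a one-dimensional scalar map.

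\textbf{Step 1: reduction to a scalar recursion.} I would show by induction that $P_{k}=p_{k}I_{7}$ for all $k\ge 0$, with $p_{k}>0$. If $P_{k-1}=p_{k-1}I_{7}$, then~\eqref{eq:rls_gain} gives
\begin{equation*}
K_{k}=\frac{p_{k-1}}{\lambda+p_{k-1}}\,I_{7}.
\end{equation*}
Substituting into~\eqref{eq:rls_cov},
\begin{equation*}
P_{k}=\lambda^{-1}\Bigl(1-\frac{p_{k-1}}{\lambda+p_{k-1}}\Bigr)p_{k-1}I_{7}=\frac{p_{k-1}}{\lambda+p_{k-1}}\,I_{7}.
\end{equation*}
Hence $p_{k}=f(p_{k-1})$ with $f(p)=p/(\lambda+p)$. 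Positivity is preserved because $\lambda>0$ by Assumption~\ref{assump:filter_params}. The semidefinite inequalities in~\eqref{eq:cov_bound} are then equivalent to the scalar bounds $1-\lambda\le p_{k}\le 1$.

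\textbf{Step 2: the upper bound.} For any $p\ge 0$ we have $f(p)=p/(\lambda+p)<1$, since $\lambda>0$. Therefore $p_{k}<1$ for every $k\ge 1$, regardless of $p_{0}$, and this gives $P_{k}\preceq I_{7}$.

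\textbf{Step 3: the lower bound.} Note that $f$ is strictly increasing on $[0,\infty)$, because $f'(p)=\lambda/(\lambda+p)^{2}>0$. Its unique positive fixed point solves $\lambda+p=1$, namely $p^{\star}=1-\lambda\in(0,1)$. Since $p_{0}=10^{4}\ge 1>1-\lambda$, monotonicity of $f$ gives the inductive step
\begin{equation*}
p_{k-1}\ge 1-\lambda \;\Longrightarrow\; p_{k}=f(p_{k-1})\ge f(1-\lambda)=1-\lambda.
\end{equation*}
This yields $P_{k}\succeq(1-\lambda)I_{7}$.

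As a byproduct, $f(p)\le p$ exactly when $p\ge 1-\lambda$. So $\{p_{k}\}$ is nonincreasing and converges to $1-\lambda$, which identifies the steady-state covariance.

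\textbf{Main obstacle.} The only delicate point is the lower bound, which genuinely depends on the initialization satisfying $p_{0}\ge 1-\lambda$; with a small $p_{0}$ the iterates would increase toward $1-\lambda$ from below. I would state explicitly that the choice $p_{0}=10^{4}$ from~\eqref{eq:rls_init} guarantees this condition, and more generally that any $p_{0}\ge 1-\lambda$ suffices. The rest of the argument is routine once the scalar reduction of Step 1 is in place.
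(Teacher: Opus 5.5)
Your proof is correct, and on the lower bound it is more rigorous than the paper's. The upper bound is the same idea in both. The paper rewrites $P_k=(\lambda I_7+P_{k-1})^{-1}P_{k-1}$ and notes that every eigenvalue $p_i/(\lambda+p_i)$ is below $1$, which holds for any positive semidefinite $P_{k-1}$; its stated induction hypothesis is never actually used. Your reduction to $P_k=p_kI_7$ is that observation specialized to the scalar initialization.

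For the lower bound the routes differ. The paper solves the fixed-point relation to get $P_\infty=(1-\lambda)I_7$, then asserts that $\{P_k\}$ is non-increasing ``since $K_k$ has eigenvalues in $[0,1)$,'' and concludes $P_k\succeq P_\infty$. That inference does not hold as stated. The matrix $\lambda^{-1}(I_7-K_k)$ has eigenvalues $1/(\lambda+p_i)$, which are at most $1$ only when $p_i\ge 1-\lambda$. So the paper silently relies on exactly the condition $p_0\ge 1-\lambda$ that you isolate. It also never shows that the iterates converge to $(1-\lambda)I_7$ rather than to another root of the fixed-point equation, such as $P=0$. Its appeal to the isotropic noise model is beside the point, since the covariance recursion contains no noise term.

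Your argument is that $f(p)=p/(\lambda+p)$ is increasing and fixes $1-\lambda$, so $[1-\lambda,\infty)$ is forward invariant. This needs neither monotonicity of the sequence nor its limit. Your byproduct, $f(p)\le p$ iff $p\ge 1-\lambda$, then actually proves the monotone decrease and convergence that the paper only claims. Your closing remark that the lemma as stated requires $p_0\ge 1-\lambda$ (met by $p_0=10^4$) is a genuine correction to the statement.
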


\begin{proof}
The proof of upper bound is by induction on $k$.  The base case $P_{0} = p_{0} I_{7}
\preceq p_{0}\,\lambda^{0}\,I_{7}$ holds trivially.  Assume
$P_{k-1} \preceq p_{0}\,\lambda^{-(k-1)}\,I_{7}$.  Using the identity
$I_{7} - K_{k} = \lambda\,(\lambda I_{7} + P_{k-1})^{-1}$ together
with~\eqref{eq:rls_cov},
\begin{equation*}
  P_{k}
  = \lambda^{-1}(I_{7} - K_{k})P_{k-1}
  = (\lambda I_{7} + P_{k-1})^{-1}P_{k-1}.
\end{equation*}
Each eigenvalue of $(\lambda I_{7} + P_{k-1})^{-1}P_{k-1}$ has the form
$p_{i}/(\lambda + p_{i})$ for an eigenvalue $p_{i}$ of $P_{k-1}$, and this
quantity is strictly less than $1$ for any $p_{i} > 0$.  Hence
$(\lambda I_{7} + P_{k-1})^{-1}P_{k-1} \preceq I_{7}$, and therefore
$P_{k} \preceq I_{7}$, completing the induction.

For the lower bound, the steady-state covariance $P_{\infty}$ satisfies the fixed-point relation
$P_{\infty} = \lambda^{-1}(I_{7} - K_{\infty})\,P_{\infty}$, which under the
isotropic noise model resolves to $P_{\infty} = (1 - \lambda)\,I_{7}$.
Since the gain $K_{k}$ has eigenvalues in $[0, 1)$, the sequence $P_{k}$ is
monotonically non-increasing in the positive semidefinite order, and therefore
$P_{k} \succeq P_{\infty}$ for all finite $k$, which is the stated lower
bound.
\end{proof}

\begin{theorem}[Convergence of the RLS Estimator]
\label{thm:rls_convergence}
Under Assumptions~\ref{assump:bounded_pert}--\ref{assump:filter_params}, let
$e_{k} = \hat{\boldsymbol{\delta}}_{k} - \boldsymbol{\delta}_{k}$ denote the
estimation error of the RLS filter defined
by~\eqref{eq:rls_gain}--\eqref{eq:rls_cov}.  Then, for all $k \geq 1$,
\begin{equation}
  \mathbb{E}\bigl[\lVert e_{k}\rVert_{2}^{2}\bigr]
  \;\leq\;
  \lambda^{2k}\,\lVert e_{0}\rVert_{2}^{2}
  + \frac{1 - \lambda}{1 + \lambda}\cdot 7\sigma^{2}
  + \frac{7\,\nu_{\max}^{2}}{(1-\lambda)^{2}}.
  \label{eq:rls_mse_bound}
\end{equation}
\end{theorem}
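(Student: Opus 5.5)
The plan is to write the RLS update as a linear error recursion, unroll it, and bound the mean square of three pieces separately: the initial error, the accumulated measurement noise, and the accumulated drift. These three pieces correspond to the three terms in \eqref{eq:rls_mse_bound}.

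\textbf{Step 1: model the raw residual.} Subtract \eqref{eq:pred} from \eqref{eq:true_dyn}, with the index shifted so that $\mathbf{x}_{k}$ is propagated from $\mathbf{x}_{k-1}$. This gives
\[
\hat{\boldsymbol{\delta}}_{k}^{\mathrm{raw}} = \boldsymbol{\delta}_{k-1} + \mathbf{n}_k ,
\]
where $\mathbf{n}_k$ is a fixed linear combination of $\boldsymbol{\eta}_k$ and $\boldsymbol{\eta}_{k-1}$, scaled by $1/\Delta t$. It is zero mean. To stay consistent with the isotropic Assumption~\ref{assump:bounded_noise}, I will treat $\mathbf{n}_k$ as an effective zero-mean noise with per-channel variance $\sigma^2$. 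The correlation between $\mathbf{n}_k$ and $\mathbf{n}_{k-1}$, which comes from the shared $\boldsymbol{\eta}_{k-1}$, is a gap I will either absorb into $\sigma^2$ or dismiss by reinterpreting $\sigma^2$ as the residual noise level.

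\textbf{Step 2: error recursion.} Substitute into \eqref{eq:rls_update}:
\[
e_k = (I-K_k)e_{k-1} + K_k\mathbf{n}_k - (I-K_k)(\boldsymbol{\delta}_k-\boldsymbol{\delta}_{k-1}) .
\]
Everything stays isotropic because $P_0=p_0I_7$. Hence every $P_k$ is a scalar multiple $p_kI_7$, and every $K_k$ equals $\kappa_k I_7$ with $\kappa_k=p_{k-1}/(\lambda+p_{k-1})$. I will then use Lemma~\ref{lemma:cov_bound}, specifically $P_k\succeq(1-\lambda)I_7$ in the steady regime, to take $\kappa_k$ at its limiting value $1-\lambda$. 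With that value, $I-K_k=\lambda I$, and the recursion becomes a scalar first-order filter with pole $\lambda$.

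\textbf{Step 3: unroll and bound each part.}
\[
e_k = \lambda^k e_0 + (1-\lambda)\sum_{j=1}^{k}\lambda^{k-j}\mathbf{n}_j - \sum_{j}\lambda^{k-j+1}\Delta\boldsymbol{\delta}_j .
\]
Take expectations of the squared norm. The terms are handled as follows.
\begin{itemize}
\item \emph{Initial error.} The deterministic part gives $\lambda^{2k}\lVert e_0\rVert^2$.
\item \emph{Noise.} Using independence and zero mean, the noise sum gives $7\sigma^2(1-\lambda)^2\sum\lambda^{2(k-j)}\le 7\sigma^2(1-\lambda)^2/(1-\lambda^2)=7\sigma^2(1-\lambda)/(1+\lambda)$. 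This is exactly the middle term.
\item \emph{Drift.} Assumption~\ref{assump:bounded_drift} and the geometric sum give a drift norm of at most $\nu_{\max}\lambda/(1-\lambda)\le\nu_{\max}/(1-\lambda)$. Its square is at most $\nu_{\max}^2/(1-\lambda)^2$. The factor $7$ comes from a componentwise (per-channel) bound.
\item \emph{Cross terms.} Cross terms between the noise and the deterministic parts vanish in expectation, because the deterministic parts are independent of $\mathbf{n}_j$. The remaining cross term between the initial error and the drift can be handled by noting that the initial error decays. Alternatively, I would accept a constant factor of 2 via $(a+b)^2\le2a^2+2b^2$ and then tighten it, since the stated bound has no such factor.
\end{itemize}

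\textbf{Main obstacle.} The hardest part is justifying a constant gain $1-\lambda$ from $k=1$ onward. With $p_0=10^4$ the early gains are close to $1$, not $1-\lambda$. My first attempt is to show that, for $\kappa_j\in[1-\lambda,1)$, the contraction factor $1-\kappa_j$ is at most $\lambda$. That keeps the $\lambda^{2k}$ decay and the drift bound valid. The noise term is harder, because larger early gains amplify the noise. Here I would use the fact that a transient large gain only injects noise whose weight decays like $\lambda^{2(k-j)}$. I would bound it by the steady-state sum, possibly via an auxiliary Lyapunov-type recursion for $\mathbb{E}\lVert e_k\rVert^2$ with a comparison argument.
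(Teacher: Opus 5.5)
\textbf{Main gap: the step you flagged is false, not just hard.} Your outline is the paper's own argument. The paper posits $y_k=\boldsymbol{\delta}_k+\boldsymbol{\eta}_k$, writes the same error recursion, replaces $\lVert K_k\rVert_2$ and $\lVert I_7-K_k\rVert_2$ by their steady-state values $1-\lambda$ and $\lambda$, and sums three geometric series. It also silently drops the factor $3$ from $\lVert a+b+c\rVert^2\le 3(\cdots)$. So the step you call the main obstacle is exactly the paper's unjustified step. Your proposed repair cannot work, because \eqref{eq:rls_mse_bound} is false as stated.

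Take $\boldsymbol{\delta}_k\equiv\mathbf{0}$, so $\nu_{\max}=0$ and $e_0=\mathbf{0}$. Let the noise components be $\pm\sigma$ with equal probability, and use the idealized observation $y_k=\boldsymbol{\eta}_k$ that you and the paper both assume. The right-hand side is then $\frac{1-\lambda}{1+\lambda}\,7\sigma^2$. But $e_1=K_1\boldsymbol{\eta}_1$ with $K_1=\frac{p_0}{\lambda+p_0}I_7$, so $\mathbb{E}\lVert e_1\rVert_2^2=\bigl(\frac{p_0}{\lambda+p_0}\bigr)^2 7\sigma^2\approx 7\sigma^2$ for $p_0=10^4$. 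The bound is only about $0.0101\cdot 7\sigma^2$ when $\lambda=0.98$.

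The failure is not limited to early steps. The covariance stays isotropic, so $K_k=P_k=p_kI_7$ with $p_k^{-1}=\lambda^k/p_0+(1-\lambda^k)/(1-\lambda)$. Hence $\hat{\boldsymbol{\delta}}_k=p_k\sum_{j=1}^k\lambda^{k-j}y_j$ is a normalized exponentially weighted average. Its noise part has mean square $p_k^2\frac{1-\lambda^{2k}}{1-\lambda^2}\,7\sigma^2$. As $p_0\to\infty$ this equals $\frac{1-\lambda}{1+\lambda}\cdot\frac{1+\lambda^k}{1-\lambda^k}\cdot 7\sigma^2$, which exceeds the stated middle term by $\frac{1-\lambda}{1+\lambda}\cdot\frac{2\lambda^k}{1-\lambda^k}\cdot 7\sigma^2$. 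That excess decays like $\lambda^k$, more slowly than the $\lambda^{2k}\lVert e_0\rVert_2^2$ slack. So with $\nu_{\max}=0$, $\sigma>0$ and any $p_0>1-\lambda$, the bound fails for all large $k$, whatever $e_0$ is. Freezing the gain goes the wrong way for an upper bound on the noise term: Lemma~\ref{lemma:cov_bound} itself gives $K_k=P_k\succeq(1-\lambda)I_7$. No comparison or Lyapunov argument can close this step.

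\textbf{What your decomposition can prove.} Keep the true gains. They telescope, $\prod_{i=j}^{k}(1-p_i)=\lambda^{k-j+1}p_k/p_{j-1}$. So the initial-error term is $\lambda^k(p_k/p_0)\lVert e_0\rVert_2$ and the drift term is at most $\lambda\nu_{\max}/(1-\lambda)$. For $p_0$ as large as here, their cross term is easily absorbed. With the frozen gain it is not, when $e_0$ and the drift are aligned and $\lVert e_0\rVert_2$ is large. This yields
$\mathbb{E}\lVert e_k\rVert_2^2\le\lambda^{2k}\lVert e_0\rVert_2^2+\frac{1-\lambda}{1+\lambda}\cdot\frac{1+\lambda^k}{1-\lambda^k}\cdot 7\sigma^2+\frac{7\nu_{\max}^2}{(1-\lambda)^2}$,
so the stated form holds only as $k\to\infty$. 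Like your Step~3 cross-term argument, this requires $\boldsymbol{\delta}_k$ to be independent of the noise. The thrust-dependent part of $\boldsymbol{\delta}_k$ is not independent of the noise in closed loop.

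\textbf{Step 1 changes the theorem.} Reinterpreting $\sigma^2$ alters the statement rather than proving it. From \eqref{eq:true_dyn} and \eqref{eq:pred}, using $A^2=0$, the raw residual is $(I_7+\frac{\Delta t}{2}A)\boldsymbol{\delta}_{k-1}+\bigl(\boldsymbol{\eta}_k-(I_7+\Delta t\,A)\boldsymbol{\eta}_{k-1}\bigr)/\Delta t$.
\begin{itemize}
\item Its noise has per-channel variance of order $2\sigma^2/\Delta t^2$ and is correlated between consecutive steps.
\item Its signal places $\frac{\Delta t}{2}\boldsymbol{\delta}_{d,k-1}$ in the position channels, where $\boldsymbol{\delta}_k$ vanishes. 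This produces a persistent bias in $e_k$ that no term of \eqref{eq:rls_mse_bound} covers.
\end{itemize}
The clean model $y_k=\boldsymbol{\delta}_k+\boldsymbol{\eta}_k$ must be stated as an extra assumption; the paper assumes it implicitly. Also, with the lag $\boldsymbol{\delta}_{k-1}$, your Step~2 drift term should read $-(\boldsymbol{\delta}_k-\boldsymbol{\delta}_{k-1})$, without the factor $I-K_k$.
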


\begin{proof}
Writing $y_{k} = \boldsymbol{\delta}_{k} + \boldsymbol{\eta}_{k}$ for the raw
residual observation defined in~\eqref{eq:raw_residual}, and letting
$\delta\nu_{k} = \boldsymbol{\delta}_{k} - \boldsymbol{\delta}_{k-1}$ denote
the perturbation increment with $\lVert\delta\nu_{k}\rVert_{2} \leq \nu_{\max}$
by Assumption~\ref{assump:bounded_drift}, subtracting $\boldsymbol{\delta}_{k}$
from both sides of the RLS update~\eqref{eq:rls_update} and rearranging gives
the error recursion
\begin{equation}
  e_{k}
  \;=\; (I_{7} - K_{k})\,e_{k-1}
        + K_{k}\,\boldsymbol{\eta}_{k}
        - (I_{7} - K_{k})\,\delta\nu_{k}.
  \label{eq:error_recursion}
\end{equation}
Applying the elementary inequality
$\lVert a + b + c\rVert^{2} \leq 3(\lVert a\rVert^{2} + \lVert b\rVert^{2}
+ \lVert c\rVert^{2})$ and taking expectations, using
$\mathbb{E}[\boldsymbol{\eta}_{k}] = \mathbf{0}$ and
$\mathbb{E}[\lVert\boldsymbol{\eta}_{k}\rVert_{2}^{2}] = 7\sigma^{2}$ under
the isotropic noise model of Assumption~\ref{assump:bounded_noise}, and
bounding $\lVert K_{k}\rVert_{2} \to (1-\lambda)$ and
$\lVert I_{7} - K_{k}\rVert_{2} \to \lambda$ at steady state by
Lemma~\ref{lemma:cov_bound},
\begin{equation}
  \mathbb{E}\bigl[\lVert e_{k}\rVert_{2}^{2}\bigr]
  \;\leq\;
  \lambda^{2}\,\mathbb{E}\bigl[\lVert e_{k-1}\rVert_{2}^{2}\bigr]
  + (1-\lambda)^{2}\cdot 7\sigma^{2}
  + \lambda^{2}\,\nu_{\max}^{2}.
  \label{eq:mse_recursion}
\end{equation}
Unrolling~\eqref{eq:mse_recursion} over $k$ steps and summing the resulting
geometric series $\sum_{j=0}^{k-1}\lambda^{2j} \leq (1-\lambda^{2})^{-1}$
gives
\begin{equation*}
  \mathbb{E}\bigl[\lVert e_{k}\rVert_{2}^{2}\bigr]
  \;\leq\;
  \lambda^{2k}\,\lVert e_{0}\rVert_{2}^{2}
  + \frac{(1-\lambda)^{2}}{1-\lambda^{2}}\cdot 7\sigma^{2}
  + \frac{\lambda^{2}\,\nu_{\max}^{2}}{1-\lambda^{2}}.
\end{equation*}
Simplifying $(1-\lambda)^{2}/(1-\lambda^{2}) = (1-\lambda)/(1+\lambda)$
and bounding $\lambda^{2}/(1-\lambda^{2}) \leq (1-\lambda)^{-2}$ for
$\lambda$ near $1$ yields the stated bound~\eqref{eq:rls_mse_bound}.
\end{proof}

\begin{theorem}[Stability of the EMA Smoother]
\label{thm:ema_stability}
Under Assumption~\ref{assump:filter_params}, the EMA recursion~\eqref{eq:ema}
is BIBO stable.  Moreover, the composite estimation
error $\varepsilon_{k} = \tilde{\boldsymbol{\delta}}_{k} - \boldsymbol{\delta}_{k}$
satisfies
\begin{equation}  \label{eq:ema_error_bound}
\begin{split}
  \lVert\varepsilon_{k}\rVert_{2}^{2}
  \;&\leq\;
  (1-\alpha)^{2k}\,\lVert\varepsilon_{0}\rVert_{2}^{2}
  + \frac{\alpha^{2}}{1-(1-\alpha)^{2}}\,
    \mathbb{E}\bigl[\lVert e_{k}\rVert_{2}^{2}\bigr]\\
  &+ \frac{(1-\alpha)^{2}}{1-(1-\alpha)^{2}}\,\nu_{\max}^{2}.
\end{split}
\end{equation}
\end{theorem}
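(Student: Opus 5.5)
The plan is to treat the EMA as a stable first-order linear filter driven by the RLS output. Its error will obey a contractive recursion with two forcing terms: the RLS error $e_k$ and the perturbation drift. The bound \eqref{eq:ema_error_bound} then follows by unrolling and summing a geometric series, in the same way as in the proof of Theorem~\ref{thm:rls_convergence}.

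\textbf{Step 1 (BIBO).} I will unroll \eqref{eq:ema} with $\tilde{\boldsymbol{\delta}}_0=\mathbf{0}$ to get
$\tilde{\boldsymbol{\delta}}_k=\alpha\sum_{j=0}^{k-1}(1-\alpha)^j\hat{\boldsymbol{\delta}}_{k-j}$.
If $\lVert\hat{\boldsymbol{\delta}}_k\rVert_2\le M$ for all $k$, then
$\lVert\tilde{\boldsymbol{\delta}}_k\rVert_2\le \alpha M\sum_j(1-\alpha)^j\le M$,
because $0<1-\alpha<1$ under Assumption~\ref{assump:filter_params}. This is the same statement as the pole of $H(z)$ lying strictly inside the unit disk.

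\textbf{Step 2 (error recursion).} I will subtract $\boldsymbol{\delta}_k$ from both sides of \eqref{eq:ema} and write $\boldsymbol{\delta}_k=\boldsymbol{\delta}_{k-1}+\delta\nu_k$, using the same notation as in Theorem~\ref{thm:rls_convergence}. This gives
\begin{equation*}
  \varepsilon_k=(1-\alpha)\,\varepsilon_{k-1}+\alpha\,e_k-(1-\alpha)\,\delta\nu_k .
\end{equation*}
The error therefore obeys a scalar-gain contraction with contraction factor $1-\alpha$, forced by the RLS error $e_k$ and by the drift $\delta\nu_k$, where $\lVert\delta\nu_k\rVert_2\le\nu_{\max}$ by Assumption~\ref{assump:bounded_drift}.

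\textbf{Step 3 (squared bound and unrolling).} I will square the recursion in Step 2 and bound the cross terms. One option is to argue, as in the proof of Theorem~\ref{thm:rls_convergence}, that the noise-driven cross terms vanish in expectation; the other is to absorb them with an elementary inequality. Either way I aim for a one-step inequality of the form
\begin{equation*}
  \lVert\varepsilon_k\rVert_2^2\le(1-\alpha)^2\lVert\varepsilon_{k-1}\rVert_2^2+\alpha^2\,\mathbb{E}\bigl[\lVert e_k\rVert_2^2\bigr]+(1-\alpha)^2\nu_{\max}^2 .
\end{equation*}
Unrolling over $k$ steps gives the transient term $(1-\alpha)^{2k}\lVert\varepsilon_0\rVert_2^2$. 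For the forcing terms, I will bound $\mathbb{E}[\lVert e_{k-j}\rVert_2^2]$ uniformly by its worst-case value from \eqref{eq:rls_mse_bound}, which is the quantity the statement abbreviates as $\mathbb{E}[\lVert e_k\rVert_2^2]$. The two geometric sums $\sum_j(1-\alpha)^{2j}$ are then both at most $1/(1-(1-\alpha)^2)$, which produces exactly the coefficients appearing in \eqref{eq:ema_error_bound}.

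\textbf{Main obstacle.} The hard step is the cross terms in Step 3. Terms such as $\varepsilon_{k-1}^{\mathsf{T}}e_k$ are not zero-mean, because $e_k$ depends on the past noise and on the drift. The constants $1$ in front of each term are therefore only justified heuristically, in the same spirit as the paper's RLS argument.

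My first approach to this obstacle is to interpret the left-hand side of \eqref{eq:ema_error_bound} as holding in expectation, or up to the deterministic drift part, and to note that $e_k$ enters the EMA only through the filtered sum. If an honest bound is required, I will instead apply Young's inequality $2ab\le a^2+b^2$. That version is rigorous, but it inflates each forcing coefficient by a constant factor: for example, using the factor $3$ as in the proof of Theorem~\ref{thm:rls_convergence}, or the $\lVert a+b+c\rVert_2^2\le3(\dots)$ split.
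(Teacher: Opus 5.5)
Your Steps~1 and~2 match the paper's proof: your unrolling is the pole-at-$z=1-\alpha$ argument, and your error recursion is identical to the paper's. The paper then closes Step~3 with the single phrase ``squaring, taking expectations, and summing the resulting geometric series exactly as in the proof of Theorem~\ref{thm:rls_convergence}.''

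The obstacle you flag in Step~3 is a genuine gap, and it cannot be closed, because the target inequality is false. Use the error recursion \eqref{eq:error_recursion} with zero noise, a constant perturbation $\boldsymbol{\delta}_k\equiv\boldsymbol{\delta}\neq\mathbf{0}$ (so $\nu_{\max}=0$), and the prescribed initialization $\hat{\boldsymbol{\delta}}_0=\tilde{\boldsymbol{\delta}}_0=\mathbf{0}$. Then $e_0=\varepsilon_0=-\boldsymbol{\delta}$, $e_1=c\,e_0$ with $c=\lambda/(\lambda+p_0)\in(0,1)$, and $\varepsilon_1=-(1-\alpha+\alpha c)\boldsymbol{\delta}$. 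At $k=1$ the left side of \eqref{eq:ema_error_bound} minus its right side is
\[
\alpha c\,(1-\alpha)\Bigl[\,2-\frac{c\,(1-\alpha)}{2-\alpha}\Bigr]\lVert\boldsymbol{\delta}\rVert_2^2>0
\]
for all $\alpha,\lambda\in(0,1)$ and $p_0>0$. By continuity the violation persists for sufficiently small noise, with or without an expectation on the left. The culprit is exactly the cross term $2\alpha(1-\alpha)\,\varepsilon_0^{\mathsf{T}}e_1>0$: both filters start from zero, so their errors are deterministically aligned, and taking expectations does nothing to it.

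Replacing $\mathbb{E}[\lVert e_k\rVert_2^2]$ by a worst case over past indices, as you propose, does not rescue the constants either. Take a noise-free ramp $\boldsymbol{\delta}_k=k\boldsymbol{\nu}$, frozen after a large index to respect Assumption~\ref{assump:bounded_pert}. Then $\lVert e_k\rVert_2$ increases to $\frac{\lambda}{1-\lambda}\lVert\boldsymbol{\nu}\rVert_2$, while $\lVert\varepsilon_k\rVert_2\to\bigl(\frac{\lambda}{1-\lambda}+\frac{1-\alpha}{\alpha}\bigr)\lVert\boldsymbol{\nu}\rVert_2$. Its square exceeds the right side, because $\frac{\alpha^2}{1-(1-\alpha)^2}<1$ and $\frac{(1-\alpha)^2}{1-(1-\alpha)^2}<\frac{(1-\alpha)^2}{\alpha^2}$. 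The weights $1/(1-(1-\alpha)^2)$ are what a geometric sum of squares gives when the forcing terms are orthogonal across steps; here they add coherently.

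Your ``honest'' fallback also fails as described. Applying $\lVert a+b+c\rVert_2^2\le 3(\cdots)$ at each step inflates the contraction factor as well, to $3(1-\alpha)^2$; the proof of Theorem~\ref{thm:rls_convergence} silently drops this factor. That factor is at least $1$ whenever $\alpha\le 1-1/\sqrt{3}\approx 0.42$, which includes the simulated $\alpha=0.3$, so the unrolled bound blows up.

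What works is to avoid squaring before unrolling. One option is the weighted Young inequality $\lVert a+b\rVert_2^2\le\frac{1}{1-\alpha}\lVert a\rVert_2^2+\frac{1}{\alpha}\lVert b\rVert_2^2$ with $a=(1-\alpha)\varepsilon_{k-1}$, which keeps the contraction factor at $1-\alpha$. Simpler still is Minkowski's inequality in $L^2$ applied to the recursion, which needs no independence assumption and gives
\[
\bigl(\mathbb{E}\lVert\varepsilon_k\rVert_2^2\bigr)^{1/2}\le(1-\alpha)^k\lVert\varepsilon_0\rVert_2+\max_{1\le j\le k}\bigl(\mathbb{E}\lVert e_j\rVert_2^2\bigr)^{1/2}+\frac{1-\alpha}{\alpha}\,\nu_{\max}.
\]
This bound is asymptotically tight in the ramp example. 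It is a provable replacement for \eqref{eq:ema_error_bound}, but it is a different statement: it has an expectation on the left, a maximum over past RLS errors, and larger constants. So the theorem has to be restated rather than proved as written.
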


\begin{proof}
The EMA recursion~\eqref{eq:ema} has the $z$-transform transfer function
$H(z) = \alpha z\,/\,(z - (1-\alpha))$, with a single pole at
$z = 1 - \alpha$.  Since $\alpha \in (0,1)$ by
Assumption~\ref{assump:filter_params}, $\lvert 1 - \alpha\rvert < 1$ and the
pole lies strictly inside the unit disk, which is the standard necessary and
sufficient condition for BIBO stability of a first-order infinite impulse
response filter.  For the error bound, subtracting $\boldsymbol{\delta}_{k}$
from~\eqref{eq:ema} gives
$\varepsilon_{k} = (1-\alpha)\,\varepsilon_{k-1} + \alpha\,e_{k}
- (1-\alpha)\,\delta\nu_{k}$, and squaring, taking expectations, and
summing the resulting geometric series exactly as in the proof of
Theorem~\ref{thm:rls_convergence} yields~\eqref{eq:ema_error_bound}.
\end{proof}

\begin{remark}
\label{remark:optimal_lambda}
The asymptotic mean-square error implied by Theorem~\ref{thm:rls_convergence}
as $k \to \infty$,
\begin{equation*}
  E_{\infty}(\lambda)
  \;=\; \frac{1-\lambda}{1+\lambda}\cdot 7\sigma^{2}
       + \frac{\lambda^{2}}{1-\lambda^{2}}\cdot 7\,\nu_{\max}^{2},
\end{equation*}
exhibits a tradeoff between noise rejection (favoring $\lambda \to 1$) and
tracking responsiveness to time-varying perturbations (favoring
$\lambda \to 0$).  Minimizing $E_{\infty}(\lambda)$ over $\lambda$ yields the
approximate design rule $\lambda^{*} \approx 1 - \sqrt{\nu_{\max}/\sigma}$
for small $\nu_{\max}/\sigma$.  The value $\lambda = 0.98$ used in the simulations in
Section~\ref{sec:simulations} corresponds to $\nu_{\max}/\sigma \approx 4\times 10^{-4}$,
consistent with the slowly varying gravitational and mass-gauging
perturbations considered in Section~\ref{sec:model_identification}.
\end{remark}

\subsection{Convergence of SCP}
\label{subsec:scp_convergence}

This subsection establishes that the SCP iteration converges to a KKT point of the original nonconvex Problem~1.

\begin{assumption}
\label{assump:compact}
The feasible set of Problem~1 is nonempty and compact.  This holds because
the time of flight $t_{f}$ is finite, the log-mass is bounded by
$\ln m_{\mathrm{dry}} \leq w \leq \ln m_{0}$ as in~\eqref{eq:w_bound}, and
the position and velocity trajectories are bounded by the dynamics driven by
the bounded thrust $0 \leq T \leq T_{\max}$ over the finite time horizon.
\end{assumption}

\begin{assumption}
\label{assump:strong_convexity}
The subproblem is strongly convex, i.e., there exists a constant $\mu > 0$ such that the quadratic objective
$J = \Delta t \sum_{i=1}^{N-1} \tau_{i}^{2}$ of Problem~4 is strongly convex satisfying $\nabla^2 f \succeq \mu I$ everywhere on the feasible set.
\end{assumption}

\begin{assumption}
\label{assump:lipschitz}
The function $f(w) = T_{\max}\,e^{-w}$ appearing in the thrust-to-mass
bound~\eqref{eq:tau_bound_nonlinear} has Lipschitz-continuous second
derivative on the feasible log-mass domain
$[\ln m_{\mathrm{dry}}, \ln m_{0}]$, with Lipschitz constant
$L \;=\; \sup_{w}\,\lvert f''(w)\rvert \;=\; T_{\max}/m_{\mathrm{dry}}$.
\end{assumption}

\begin{assumption}
\label{assump:subproblem_feasible}
Problem~4$^{(\ell)}$ is feasible for every linearization point
$w^{(\ell)}$ arising in the SCP iteration within the compact domain of
Assumption~\ref{assump:compact}.
\end{assumption}

\begin{lemma}
\label{lemma:monotone}
Under Assumptions~\ref{assump:compact}--\ref{assump:subproblem_feasible}, the
sequence of optimal objective values $\{J^{(\ell)}\}$ produced by the
baseline SCP algorithm (Algorithm~\ref{alg:baseline_scp}) is monotonically
non-increasing, i.e., $J^{(\ell+1)} \leq J^{(\ell)}$ for all $\ell \geq 0$.
\end{lemma}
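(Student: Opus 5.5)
The plan is the standard SCP descent argument: show that the previous iterate's solution is feasible for the next subproblem, then compare optimal values. Let $\{\mathbf{x}_{i}^{*(\ell)},\mathbf{u}_{i}^{*(\ell)}\}$ be the optimal solution of Problem~4$^{(\ell)}$, which exists by Assumption~\ref{assump:subproblem_feasible} together with Assumption~\ref{assump:compact} and the continuity of the objective. Its value is $J^{(\ell)}$, and by the update~\eqref{eq:scp_update} the next linearization point is $w_{i}^{(\ell+1)} = x_{i,7}^{*(\ell)}$. The goal is to show that this same point is feasible for Problem~4$^{(\ell+1)}$. Since both subproblems share the objective $\Delta t\sum_{i}\tau_{i}^{2}$, feasibility immediately gives $J^{(\ell+1)} \le J^{(\ell)}$.

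\textbf{Feasibility check.} Problems~4$^{(\ell)}$ and 4$^{(\ell+1)}$ differ only in the linearized thrust bound~\eqref{eq:base_tau_ub}. The dynamics~\eqref{eq:trap_dyn}, boundary conditions~\eqref{eq:disc_ic}--\eqref{eq:disc_tc}, log-mass bounds~\eqref{eq:disc_w}, and SOC constraint~\eqref{eq:disc_soc} are identical. So it suffices to verify, at each node $i$,
\begin{equation*}
0 \le \tau_{i}^{*(\ell)} \le T_{\max}e^{-w_{i}^{(\ell+1)}}\bigl[1-(x_{i,7}^{*(\ell)}-w_{i}^{(\ell+1)})\bigr] = T_{\max}e^{-x_{i,7}^{*(\ell)}}.
\end{equation*}
The equality holds because the new linearization is taken exactly at the current point, so the bracket equals $1$. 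The right-hand side is therefore the true nonlinear bound $f(x_{i,7}^{*(\ell)})$ with $f(w)=T_{\max}e^{-w}$.

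\textbf{Key step.} From feasibility in Problem~4$^{(\ell)}$,
\[
\tau_{i}^{*(\ell)} \le f(w_{i}^{(\ell)}) + f'(w_{i}^{(\ell)})\bigl(x_{i,7}^{*(\ell)}-w_{i}^{(\ell)}\bigr).
\]
Since $f$ is convex, its tangent line is a global underestimator (the observation made after~\eqref{eq:tau_linearized}). Hence the right-hand side is at most $f(x_{i,7}^{*(\ell)})$. Chaining the two inequalities gives the required bound, and the lower bound $\tau_{i}\ge 0$ carries over unchanged.

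\textbf{Main obstacle.} This tangent-underestimator inequality is the only nontrivial point, and it is what makes the linearized feasible set an inner approximation that moves monotonically. I would also note that a tie in optimal value, or non-uniqueness of minimizers, does not affect the argument. Assumption~\ref{assump:strong_convexity} only matters later, for uniqueness of iterates. Assumption~\ref{assump:lipschitz} is not needed for monotonicity.

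\textbf{Base case.} The case $\ell=0$ needs care, because the initial guess~\eqref{eq:init_traj} is not itself a subproblem solution. So I would define $J^{(\ell)}$ as the optimal value of Problem~4$^{(\ell)}$ and start the comparison from $\ell=0$ onward. Under that indexing the argument covers all $\ell\ge0$.

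Finally, since $J^{(\ell)}\ge0$, the sequence is bounded below and therefore converges. This is useful for the subsequent convergence theorem, though it is not needed for the lemma itself.
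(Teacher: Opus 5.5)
Your proposal is correct and is the same argument as the paper's: convexity of $f(w)=T_{\max}e^{-w}$ makes each linearized bound an underestimate of the true bound, and relinearizing exactly at the previous solution's log-mass makes that solution feasible for the next subproblem, so the shared objective gives $J^{(\ell+1)}\le J^{(\ell)}$. Your bookkeeping is cleaner than the paper's: you linearize Problem~4$^{(\ell+1)}$ about $w_{i}^{(\ell+1)}=x_{i,7}^{*(\ell)}$ and take $J^{(\ell)}$ to be the optimal value of Problem~4$^{(\ell)}$, whereas the paper's proof has an off-by-one in the linearization index and, read literally, would compare against the cost $J(\mathbf{x}^{(0)},\mathbf{u}^{(0)})$ of the dynamically infeasible straight-line guess.
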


\begin{proof}
Because $f(w) = T_{\max}\,e^{-w}$ is convex, its first-order Taylor
expansion about any point $w^{(\ell)}$ is a global underestimate,
$T_{\max}\,e^{-w^{(\ell)}}\bigl[1 - (w - w^{(\ell)})\bigr] \leq T_{\max}\,e^{-w}$
for all $w$.  Consequently, the feasible set defined by the linearized
constraint~\eqref{eq:base_tau_ub} at iteration $\ell$ is a subset of the
feasible set defined by the true (nonlinear) bound~\eqref{eq:tau_bound_nonlinear}.
Moreover, the current iterate
$\{\mathbf{x}_{i}^{(\ell)}, \mathbf{u}_{i}^{(\ell)}\}$ is feasible for
Problem~4$^{(\ell+1)}$, since the linearization at iteration $\ell+1$ is
performed about $w^{(\ell)}$ itself, at which point the linearized bound
coincides exactly with the true bound.  Since
$\{\mathbf{x}_{i}^{(\ell+1)}, \mathbf{u}_{i}^{(\ell+1)}\}$ is by definition
the minimizer of Problem~4$^{(\ell+1)}$ over a feasible set that contains
the previous iterate,
\begin{equation*}
  J^{(\ell+1)}
  = J\bigl(\mathbf{x}^{(\ell+1)}, \mathbf{u}^{(\ell+1)}\bigr)
  \;\leq\;
  J\bigl(\mathbf{x}^{(\ell)}, \mathbf{u}^{(\ell)}\bigr)
  = J^{(\ell)}.
\end{equation*}
\end{proof}

\begin{theorem}
\label{thm:scp_convergence}
Under Assumptions~\ref{assump:compact}--\ref{assump:subproblem_feasible} and
the lossless relaxation result of Theorem~\ref{thm:lossless}, the SCP
iteration of Algorithm~\ref{alg:baseline_scp} generates a sequence
$\{(\mathbf{x}_{i}^{(\ell)}, \mathbf{u}_{i}^{(\ell)})\}$ with the following
properties: (a) Every subsequential limit point of the sequence is a KKT point
        of Problem~1; (b) The objective sequence $\{J^{(\ell)}\}$ converges to
        $J^{*} = J(\mathbf{x}^{*}, \mathbf{u}^{*})$, the objective value at
        any such KKT point; and (c) If the KKT point is unique, the full sequence converges, i.e., $(\mathbf{x}_{i}^{(\ell)}, \mathbf{u}_{i}^{(\ell)})
        \to (\mathbf{x}_{i}^{*}, \mathbf{u}_{i}^{*})$ as $\ell \to \infty$.
\end{theorem}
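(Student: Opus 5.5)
The argument follows the standard template for SCP with convex majorant-type constraint linearizations (as in \cite{mao2018successive}), specialized to the fact that only the scalar bound \eqref{eq:tau_bound_nonlinear} is linearized.

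\textbf{Monotonicity and part (b).} By Lemma~\ref{lemma:monotone}, $\{J^{(\ell)}\}$ is non-increasing. It is bounded below by $0$, since $J=\Delta t\sum\tau_i^2\ge 0$. Hence $J^{(\ell)}\to \bar J$ for some limit $\bar J$. Part (b) will follow once I show that $\bar J$ equals the objective at the limit point constructed below. That equality comes from continuity of $J$ together with $J^{(\ell_m)}\to J(\bar{\mathbf x},\bar{\mathbf u})$ along a subsequence; because the whole sequence $J^{(\ell)}$ converges, every subsequential limit point has the same value $\bar J=J^*$.

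\textbf{Existence of limit points.} All iterates are feasible for the linearized subproblems. Since the linearized feasible set is contained in the true one (global underestimate), every iterate is feasible for the discretized Problem~2 with the SOC relaxation. By Assumption~\ref{assump:compact}, this set is compact, so Bolzano--Weierstrass yields a subsequence $(\mathbf x^{(\ell_m)},\mathbf u^{(\ell_m)})\to(\bar{\mathbf x},\bar{\mathbf u})$. Strong convexity (Assumption~\ref{assump:strong_convexity}) makes each subproblem minimizer unique, so the iteration map $\Phi: w^{(\ell)}\mapsto(\mathbf x^{*(\ell)},\mathbf u^{*(\ell)})$ is single-valued. To get continuity of $\Phi$, I intend to use Berge's maximum theorem. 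The linearized constraint depends continuously on $w^{(\ell)}$, and Slater-type feasibility is available via Assumption~\ref{assump:subproblem_feasible}, which gives continuity of the constraint set-valued map. I then need the successive iterates to merge, $\|\mathbf x^{(\ell+1)}-\mathbf x^{(\ell)}\|\to0$. For this I plan a sufficient-decrease inequality
\[
J^{(\ell)}-J^{(\ell+1)}\;\ge\;\tfrac{\mu}{2}\,\bigl\lVert \mathbf u^{(\ell+1)}-\mathbf u^{(\ell)}\bigr\rVert^2 .
\]
This follows from strong convexity and the variational inequality at the minimizer of Problem~4$^{(\ell+1)}$, whose feasible set contains the previous iterate. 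Summing the inequality shows the control increments vanish. The state increments then vanish too, since states are affine images of controls through \eqref{eq:trap_dyn} with fixed initial conditions. Consequently $\bar{\mathbf x}$ is also the limit of $\mathbf x^{(\ell_m+1)}$, and $(\bar{\mathbf x},\bar{\mathbf u})=\Phi(\bar w)$ is a fixed point.

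\textbf{Fixed point is KKT, part (a).} At a fixed point the linearization point equals the solution, $w^{(\ell)}=\bar w$. There the linearized bound and its gradient with respect to $w$ coincide with those of $T_{\max}e^{-w}$. The KKT conditions of the convex subproblem, which hold by Slater, are therefore exactly the KKT conditions of the discretized Problem~2 with the SOC relaxation. By Theorem~\ref{thm:lossless}, the SOC constraint is tight at this optimum, so the point satisfies \eqref{eq:cone_surface}. Mapping back through the invertible change of variables \eqref{eq:tau_def}--\eqref{eq:w_def} then gives a KKT point of Problem~1, because the diffeomorphism preserves stationarity and multipliers transform accordingly.

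\textbf{Part (c).} Suppose the KKT point is unique. Every subsequence of the bounded sequence has a further subsequence converging to the same point, so the whole sequence converges.

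\textbf{Main obstacle.} I expect the hardest step to be the sufficient-decrease inequality, and the continuity of $\Phi$ needed to pass to the limit. Strong convexity is stated only in $\tau$, not in all variables. I would handle this by noting that the full trajectory is determined by the controls, which closes the gap. Additionally, $\tau_{1:3}$ must be controlled through the SOC bound by $\tau$. If strong convexity in $\tau_{1:3}$ fails, I would fall back to showing merely that $\tau$ and $w$ increments vanish, which suffices for the fixed-point argument because only $w$ enters the linearization.
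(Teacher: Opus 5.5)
Your proposal follows the paper's proof essentially step for step: Lemma~\ref{lemma:monotone}, compactness and Bolzano--Weierstrass, continuity of the strongly convex subproblem's solution map to get a fixed point, exactness of the linearization there plus Theorem~\ref{thm:lossless} for (a), monotone convergence for (b), and the subsequence argument for (c). Your sufficient-decrease inequality supplies a step the paper only asserts, since continuity of the update map alone gives $\mathbf{x}^{(\ell_m+1)}\to\Phi(\bar w)$, not $\mathbf{x}^{(\ell_m+1)}\to\bar{\mathbf{x}}$.

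Two caveats remain. First, Assumption~\ref{assump:subproblem_feasible} provides feasibility only, not the Slater condition you invoke for continuity of $\Phi$ and for the KKT multipliers; the paper makes the same silent leap. Second, your fallback for strong convexity only in $\tau$ fails as stated, because $J$ omits $\tau_{N}$ and $\tau_{1:3}$. Minimizers then need not be unique, and the increments of $\tau_{N}$ and $w_{N}$ are uncontrolled. The clean fix is to show that $\bar{\mathbf{x}}$ is feasible for the subproblem linearized at $\bar w$ and attains that subproblem's optimal value $\lim_{\ell}J^{(\ell)}$, using continuity of the optimal value in the linearization point. Then $\bar{\mathbf{x}}$ solves the subproblem linearized at its own log-mass, which is all the KKT step needs.
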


\begin{proof}
\emph{(a)} By Lemma~\ref{lemma:monotone}, $\{J^{(\ell)}\}$ is non-increasing,
and by Assumption~\ref{assump:compact} the iterate sequence is confined to a
compact set.  By the Bolzano--Weierstrass theorem, every such sequence
admits a convergent subsequence. Let $(\mathbf{x}^{*}, \mathbf{u}^{*})$ be
the limit of one such subsequence,
$(\mathbf{x}^{(\ell_{j})}, \mathbf{u}^{(\ell_{j})}) \to (\mathbf{x}^{*}, \mathbf{u}^{*})$.
Because the SCP update map is continuous under
Assumptions~\ref{assump:strong_convexity}--\ref{assump:subproblem_feasible}
(continuity of the optimal solution of a strongly convex parametric program
with respect to the linearization parameter), the successor iterates
$(\mathbf{x}^{(\ell_{j}+1)}, \mathbf{u}^{(\ell_{j}+1)})$ converge to the same
limit, so $(\mathbf{x}^{*}, \mathbf{u}^{*})$ is a fixed point of the SCP map.
At a fixed point, $w^{(\ell)} = w^{*}$, and the linearized
bound~\eqref{eq:base_tau_ub} coincides exactly with the original nonlinear
bound~\eqref{eq:tau_bound_nonlinear} at $w = w^{*}$.  The KKT conditions of
the subproblem at the fixed point therefore coincide with the KKT conditions
of Problem~2: stationarity is unaffected since the objective is unchanged
and the dynamics constraint~\eqref{eq:lti} is already linear and exact;
complementary slackness for the SOC and thrust-bound multipliers carries
over unchanged; and primal feasibility for the original (un-relaxed)
Problem~1 follows from the lossless relaxation of
Theorem~\ref{thm:lossless}.  Hence $(\mathbf{x}^{*}, \mathbf{u}^{*})$ is a
KKT point of Problem~1.

\emph{(b)} Since $\{J^{(\ell)}\}$ is non-increasing (Lemma~\ref{lemma:monotone})
and bounded below by the optimal value of Problem~1 (finite by
Assumptions~\ref{assump:compact}--\ref{assump:strong_convexity}), the
monotone convergence theorem guarantees $J^{(\ell)} \to J_{\infty}$ for some
$J_{\infty}$.  Along the convergent subsequence from part (a),
$J^{(\ell_{j})} \to J(\mathbf{x}^{*}, \mathbf{u}^{*})$, so
$J_{\infty} = J^{*}$.

\emph{(c)} If the KKT point of Problem~1 is unique, every subsequential
limit identified in part (a) must equal $(\mathbf{x}^{*}, \mathbf{u}^{*})$.
Since every subsequence of a bounded sequence has a further subsequence
converging to the same unique limit, the full sequence converges to
$(\mathbf{x}^{*}, \mathbf{u}^{*})$.
\end{proof}

\subsection{Convergence Radius of the SCP Iteration}
\label{subsec:scp_radius}

Theorem~\ref{thm:scp_convergence} guarantees convergence of the SCP sequence
but does not quantify the basin of attraction around the KKT point within
which this convergence is assured.  This subsection derives an explicit
radius.

The Taylor remainder of the convex function $f(w) = T_{\max}\,e^{-w}$ about
the iterate $w^{(\ell)}$ is, by Taylor's theorem with Lagrange remainder, for
some $\xi^{(\ell)}$ between $w$ and $w^{(\ell)}$,
\begin{equation}  \label{eq:taylor_remainder_thm}
\begin{split}
  e^{(\ell)}(w)
  \;&=\; f(w) - T_{\max}\,e^{-w^{(\ell)}}\bigl[1-(w-w^{(\ell)})\bigr]\\
  \;&=\; \frac{1}{2}\,f''(\xi^{(\ell)})\,(w - w^{(\ell)})^{2}.
\end{split}
\end{equation}
Since $e^{-w^{(\ell)}} \leq e^{-\ln m_{\mathrm{dry}}} = 1/m_{\mathrm{dry}}$
on the feasible log-mass domain, the remainder is bounded by
\begin{equation}
  \bigl\lvert e^{(\ell)}(w)\bigr\rvert
  \;\leq\;
  \frac{T_{\max}}{2\,m_{\mathrm{dry}}}\,(w - w^{(\ell)})^{2}
  \;=\; \frac{L}{2}\,(w-w^{(\ell)})^{2},
  \label{eq:remainder_bound_thm}
\end{equation}
where $L = T_{\max}/m_{\mathrm{dry}}$ is the Lipschitz constant of
Assumption~\ref{assump:lipschitz}.  Let $\mu_{w} > 0$ denote the curvature of
the Lagrangian of Problem~4 with respect to the log-mass variable $w$ at the
KKT point, that is, $\mu_{w} = \lambda_{\tau}^{*}\,f''(w^{*})$, where
$\lambda_{\tau}^{*} \geq 0$ is the Lagrange multiplier associated with the
thrust-to-mass bound at the optimum.

\begin{theorem}
\label{thm:convergence_radius}
Under Assumptions~\ref{assump:compact}--\ref{assump:subproblem_feasible}, let
$(\mathbf{x}^{*}, \mathbf{u}^{*})$ be a KKT point of Problem~1 with log-mass
component $w^{*}$.  The SCP iteration converges to
$(\mathbf{x}^{*}, \mathbf{u}^{*})$ from any initialization satisfying
\begin{equation}
  \bigl\lvert w^{(0)} - w^{*}\bigr\rvert
  \;<\;
  r_{\mathrm{conv}}
  \;=\;
  \frac{\mu_{w}}{L}
  \;=\;
  \frac{\mu_{w}\,m_{\mathrm{dry}}}{T_{\max}}.
  \label{eq:r_conv_thm}
\end{equation}
\end{theorem}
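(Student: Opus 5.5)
The plan is a local contraction argument for the SCP map $\Phi: w^{(\ell)} \mapsto w^{(\ell+1)}$ near $w^{*}$, using the Taylor remainder bound~\eqref{eq:remainder_bound_thm} as the perturbation term and the curvature $\mu_{w}$ as the restoring term. This is the standard Newton/Gauss--Newton-type estimate: the error of the next iterate is bounded by a quadratic remainder divided by a curvature constant.

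\emph{Step 1: compare the two problems' KKT systems.} Fix $(\mathbf{x}^{*},\mathbf{u}^{*})$ and its multipliers. By Theorem~\ref{thm:scp_convergence}(a), $w^{*}$ is a fixed point of $\Phi$. For a linearization point $w^{(\ell)}$, Problem~4$^{(\ell)}$ differs from the subproblem linearized at $w^{*}$ only through the right-hand side of~\eqref{eq:base_tau_ub}. By~\eqref{eq:taylor_remainder_thm} this right-hand side equals $f(w) - e^{(\ell)}(w)$. So Problem~4$^{(\ell)}$ is the exact fixed-point problem with the thrust bound tightened by $e^{(\ell)}(w)\ge 0$.

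\emph{Step 2: a perturbation/sensitivity estimate.} Strong convexity (Assumption~\ref{assump:strong_convexity}) gives a Lipschitz-stable solution of the perturbed program. Along the log-mass direction, the Lagrangian curvature at the optimum is $\mu_{w}=\lambda_{\tau}^{*}f''(w^{*})$. The plan is to write the optimality conditions of Problem~4$^{(\ell)}$ and of the fixed-point problem, subtract them, and pair the difference with $w^{(\ell+1)}-w^{*}$. Monotonicity of the KKT operator should then give
\begin{equation*}
  \mu_{w}\,\bigl\lvert w^{(\ell+1)}-w^{*}\bigr\rvert^{2}
  \;\le\; \lambda_{\tau}^{*}\,\bigl\lvert e^{(\ell)}(w^{(\ell+1)})\bigr\rvert\,\bigl\lvert w^{(\ell+1)}-w^{*}\bigr\rvert + \cdots .
\end{equation*}
Applying~\eqref{eq:remainder_bound_thm}, and absorbing the multiplier factor into the normalization defining $\mu_{w}$, I expect the estimate
\begin{equation*}
  \bigl\lvert w^{(\ell+1)}-w^{*}\bigr\rvert \;\le\; \frac{L}{\mu_{w}}\,\bigl\lvert w^{(\ell)}-w^{*}\bigr\rvert^{2}.
\end{equation*}
In fact, the remainder should be evaluated at the pair $(w^{(\ell)},w^{*})$, and I would arrange the comparison so that it is.

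\emph{Step 3: induction.} Set $q=L\lvert w^{(0)}-w^{*}\rvert/\mu_{w}$. Hypothesis~\eqref{eq:r_conv_thm} gives $q<1$. The recursion then yields $\lvert w^{(\ell)}-w^{*}\rvert\le (\mu_{w}/L)\,q^{2^{\ell}}\to 0$, and every iterate stays in the ball of radius $r_{\mathrm{conv}}$, hence inside the compact domain, so Assumption~\ref{assump:subproblem_feasible} applies at every step. Once the log-mass iterates converge, continuity of the solution map of the strongly convex subproblem in its linearization parameter transfers convergence to the full pair $(\mathbf{x}^{(\ell)},\mathbf{u}^{(\ell)})$. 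This is the same continuity used in Theorem~\ref{thm:scp_convergence}.

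\emph{Main obstacle.} The hard step is Step~2: getting a clean sensitivity constant that is exactly $\mu_{w}$. The objective depends only on $\tau$, so curvature in $w$ comes only through the multiplier term. If $\lambda_{\tau}^{*}=0$ (thrust bound inactive), then $\mu_{w}=0$ and the radius degenerates. In that case the bound is not active near the solution, and I would argue separately that $\Phi$ is locally constant there. I would first try a second-order sufficient condition restricted to the critical cone, using $\mu_{w}$ as the curvature on the $w$-component. If that is too weak, I would fall back on inserting the strong convexity constant $\mu$ of Assumption~\ref{assump:strong_convexity}, combined with the linear coupling $\tau\leftrightarrow w$ through~\eqref{eq:lti}.
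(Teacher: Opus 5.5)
\paragraph{Verdict.} You follow the same route as the paper, but the key estimate in your Step~2 cannot be obtained from that comparison. The paper's own proof makes the same unjustified step, so the gap is inherited rather than introduced by you.

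\paragraph{What matches the paper.} The paper also compares the stationarity conditions of Problem~4$^{(\ell)}$ with those at $w^{*}$. It bounds the mismatch by the remainder~\eqref{eq:remainder_bound_thm} and divides by $\mu_{w}$ to get $\lvert w^{(\ell+1)}-w^{*}\rvert\le (L/\mu_{w})\lvert w^{(\ell)}-w^{*}\rvert^{2}$. It then inducts and passes to $(\mathbf{x},\mathbf{u})$ via Theorem~\ref{thm:scp_convergence}. Granting that inequality, your Step~3 is correct and slightly sharper than the paper's forward-invariance argument. With $e_{\ell}=(L/\mu_{w})\lvert w^{(\ell)}-w^{*}\rvert$ one has $e_{\ell+1}\le e_{\ell}^{2}$, hence $e_{\ell}\le q^{2^{\ell}}$.

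There is one small slip in Step~1. Theorem~\ref{thm:scp_convergence}(a) says limit points are KKT points, not that a given KKT point is a fixed point. The latter holds for a different reason: the linearization at $w^{*}$ matches $f$ in value and slope. So the KKT conditions of Problem~1 at $(\mathbf{x}^{*},\mathbf{u}^{*})$ are the sufficient KKT conditions of the convex subproblem built at $w^{*}$.

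\paragraph{The gap in Step~2.} Only the \emph{value} mismatch $e^{(\ell)}(w^{*})$ is $O(\lvert w^{(\ell)}-w^{*}\rvert^{2})$.
\begin{itemize}
\item The linearized bound~\eqref{eq:base_tau_ub} has slope $f'(w^{(\ell)})$ rather than $f'(w^{*})$. So the $w$-component of the difference of the two stationarity conditions contains $\lambda_{\tau}^{*}\bigl(f'(w^{(\ell)})-f'(w^{*})\bigr)\approx \mu_{w}(w^{(\ell)}-w^{*})$. This is a first-order forcing term, hidden in your ``$+\cdots$''.
\item It cannot be absorbed into curvature on the left. The linearized bound is affine in $w$, so Problem~4$^{(\ell)}$ has no $\lambda_{\tau}f''$ term in its Lagrangian Hessian. $\mu_{w}$ is the curvature of the \emph{true} problem, and here it appears on the wrong side.
\item Your displayed inequality does not give $L/\mu_{w}$ either. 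With $\mu_{w}=\lambda_{\tau}^{*}f''(w^{*})$ the multiplier cancels, leaving the constant $L/(2f''(w^{*}))=e^{w^{*}}/(2m_{\mathrm{dry}})$.
\end{itemize}

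\paragraph{Why the quadratic bound fails in general.} Suppose the thrust bound is active and does not by itself fix all remaining degrees of freedom. Then the SCP map generically has a nonzero derivative at $w^{*}$, as is usual for inner-approximation (majorize--minimize) schemes. Convergence is therefore only linear, and no bound of the form $C\lvert w^{(\ell)}-w^{*}\rvert^{2}$ can hold.

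For a concrete instance, take $\min\,(\tau-a)^{2}+c\,w^{2}$ subject to $\tau\le e^{-w}$, with the bound active. The same linearization gives the map $w^{(\ell+1)}=\bigl(s^{2}(1+w^{(\ell)})-as\bigr)/(c+s^{2})$, where $s=e^{-w^{(\ell)}}$. At the fixed point, where $s=e^{-w^{*}}<a$, its derivative is $s(a-s)/(c+s^{2})=\mu_{w}/(\mu_{w}+\varphi''(w^{*}))>0$. Here $\mu_{w}=\lambda^{*}f''(w^{*})=2s(a-s)$ and $\varphi(w)=(e^{-w}-a)^{2}+c\,w^{2}$ is the reduced true objective.

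\paragraph{Your fallbacks.}
\begin{itemize}
\item If the bound is inactive, then $\mu_{w}=0$ and the hypothesis of the theorem is vacuous. Your locally-constant argument is correct but does not bear on the statement.
\item The strong-convexity fallback gives only Lipschitz (first-order) dependence of the subproblem solution on $w^{(\ell)}$, not quadratic dependence.
\end{itemize}
A sound local result would instead bound the spectral radius of the linearized SCP map at the fixed point under second-order sufficiency. That yields linear convergence on some neighborhood, but not the explicit radius $\mu_{w}/L$.

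\paragraph{The paper's version of the step.} The paper's proof takes exactly the same ``subtract and divide by $\mu_{w}$'' step. It even attributes the difference to the gradient error of $f$, which the mean value theorem makes first order.
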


\begin{proof}
The optimality condition of Problem~4$^{(\ell)}$ with respect to $w$ at
iteration $\ell+1$, together with the optimality condition of the original
(non-relaxed) problem at the KKT point $w^{*}$, differ only through the
linearization error in the gradient of $f$.  Subtracting the two
stationarity conditions and applying the mean value theorem to the resulting
expression yields, after dividing through by the curvature $\mu_{w}$,
\begin{equation}
  \bigl\lvert w^{(\ell+1)} - w^{*}\bigr\rvert
  \;\leq\;
  \frac{L}{\mu_{w}}\,\bigl\lvert w^{(\ell)} - w^{*}\bigr\rvert^{2}.
  \label{eq:contraction_thm}
\end{equation}
If $\lvert w^{(\ell)} - w^{*}\rvert < r_{\mathrm{conv}} = \mu_{w}/L$, then
substituting into~\eqref{eq:contraction_thm} gives
\begin{equation*}
  \bigl\lvert w^{(\ell+1)} - w^{*}\bigr\rvert
  \leq
  \frac{L}{\mu_{w}}\,r_{\mathrm{conv}}\,\bigl\lvert w^{(\ell)} - w^{*}\bigr\rvert
  =
  \bigl\lvert w^{(\ell)} - w^{*}\bigr\rvert
  < r_{\mathrm{conv}},
\end{equation*}
so the ball $\{w : \lvert w - w^{*}\rvert < r_{\mathrm{conv}}\}$ is
forward-invariant under the SCP map.  Since the iterate remains within this
ball for all $\ell$ once it starts there, and the contraction
factor~\eqref{eq:contraction_thm} is strictly less than one throughout, the
sequence $\{w^{(\ell)}\}$ converges to $w^{*}$, and by
Theorem~\ref{thm:scp_convergence} the full state-control sequence converges
to the corresponding KKT point.
\end{proof}

\begin{remark}
\label{remark:r_conv_numerical}
Using the parameter settings in Section~\ref{sec:simulations}, $L = T_{\max}/m_{\mathrm{dry}}
= 44{,}000/1{,}000 = 44$.  Since the energy-optimal
thrust profile does not saturate $T_{\max}$ (confirmed numerically in
Section~\ref{sec:simulations}), the dominant curvature contribution comes
from the quadratic objective rather than the thrust-bound multiplier, giving
$\mu_{w} \approx 2\Delta t = 2 \times 50/199 \approx 0.503$ and therefore
$r_{\mathrm{conv}} \approx 0.503/44 \approx 0.0114$.  The straight-line
initialization~\eqref{eq:init_traj} produces an initial log-mass error of
approximately $\lvert w^{(0)} - w^{*}\rvert \approx 0.034$, which exceeds
$r_{\mathrm{conv}}$ by a small factor. Nonetheless, the contraction
factor~\eqref{eq:contraction_thm} shrinks rapidly as the iterate approaches
$w^{*}$, and in practice convergence is observed within two to four
iterations, consistent with the conservative nature of the bound
in~\eqref{eq:r_conv_thm}.
\end{remark}

\subsection{Convergence Rate of the SCP Iteration}
\label{subsec:scp_rate}

This subsection characterizes the asymptotic rate at which the SCP iterates
approach the KKT point, distinguishing a globally valid linear rate from a
locally sharp quadratic rate, and extending the analysis to the online SCP
algorithm of Section~\ref{sec:scp} to account for the residual
perturbation estimation error.

\begin{theorem}
\label{thm:linear_rate}
Under the hypotheses of Theorem~\ref{thm:convergence_radius} and the
initialization condition $w^{(0)} \in (w^{*} - r_{\mathrm{conv}},\,
w^{*} + r_{\mathrm{conv}})$, the SCP iterates satisfy
\begin{equation}
  \bigl\lvert w^{(\ell)} - w^{*}\bigr\rvert
  \leq
  \kappa^{\ell}\bigl\lvert w^{(0)} - w^{*}\bigr\rvert,
  \kappa = \frac{L}{\mu_{w}}\bigl\lvert w^{(0)} - w^{*}\bigr\rvert
  < 1.
  \label{eq:linear_rate_thm}
\end{equation}
\end{theorem}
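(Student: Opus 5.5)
The plan is to derive the linear rate directly from the quadratic error inequality~\eqref{eq:contraction_thm} established in the proof of Theorem~\ref{thm:convergence_radius}, i.e.\ $\lvert w^{(\ell+1)} - w^{*}\rvert \leq (L/\mu_{w})\lvert w^{(\ell)} - w^{*}\rvert^{2}$, valid whenever the iterate lies in the ball of radius $r_{\mathrm{conv}}$. Writing $d_{\ell} = \lvert w^{(\ell)} - w^{*}\rvert$, the target is $d_{\ell} \leq \kappa^{\ell} d_{0}$ with $\kappa = (L/\mu_{w}) d_{0}$. First, I would observe that $\kappa < 1$ is exactly the hypothesis $d_{0} < r_{\mathrm{conv}} = \mu_{w}/L$, so the constant in the statement is well defined and strictly contractive.

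The core is an induction on $\ell$ carrying two claims together: (i) $d_{\ell} \leq \kappa^{\ell} d_{0}$ and (ii) $d_{\ell} \leq d_{0} < r_{\mathrm{conv}}$, so that~\eqref{eq:contraction_thm} remains applicable at the next step. The base case $\ell=0$ is trivial. For the inductive step, I would split the square as $d_{\ell}\cdot d_{\ell}$ and bound only one factor by $d_{0}$:
\begin{equation*}
  d_{\ell+1} \;\leq\; \frac{L}{\mu_{w}}\, d_{\ell}\, d_{\ell}
  \;\leq\; \frac{L}{\mu_{w}}\, d_{0}\, d_{\ell}
  \;=\; \kappa\, d_{\ell}
  \;\leq\; \kappa^{\ell+1} d_{0}.
\end{equation*}
Since $\kappa<1$, this also gives $d_{\ell+1} \leq d_{\ell} \leq d_{0} < r_{\mathrm{conv}}$, which closes (ii) and preserves the forward invariance already noted in Theorem~\ref{thm:convergence_radius}. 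It is worth remarking that the same chain with both factors kept yields the sharper bound $d_{\ell} \leq (\mu_{w}/L)\kappa^{2^{\ell}}$, but only the linear bound is needed here.

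The only delicate point is ensuring that~\eqref{eq:contraction_thm} may be invoked at every iterate, not just at $\ell=0$. The constants $L$ and $\mu_{w}$ are fixed at the KKT point by Assumption~\ref{assump:lipschitz} and the definition preceding Theorem~\ref{thm:convergence_radius}, so they do not drift with $\ell$. The subproblem remains feasible at each linearization point by Assumption~\ref{assump:subproblem_feasible}, since each iterate stays in the log-mass domain. With these two facts, the monotone decrease from (ii) handles the applicability issue, and I would not expect other obstacles beyond taking~\eqref{eq:contraction_thm} as given from the preceding theorem.
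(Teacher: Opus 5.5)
Your proposal is correct and follows essentially the same route as the paper: it invokes the quadratic inequality~\eqref{eq:contraction_thm} from Theorem~\ref{thm:convergence_radius}, bounds one factor of $\lvert w^{(\ell)}-w^{*}\rvert$ by $\lvert w^{(0)}-w^{*}\rvert$ via forward invariance to obtain the per-step factor $\kappa$, and iterates. The only difference is that you prove the invariance $d_{\ell}\le d_{0}$ inside your induction instead of citing it from the preceding proof, which makes your argument self-contained.
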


\begin{proof}
By the forward invariance established in the proof of
Theorem~\ref{thm:convergence_radius}, $\lvert w^{(\ell)} - w^{*}\rvert \leq
\lvert w^{(0)} - w^{*}\rvert$ for all $\ell$.  Substituting this bound into
the contraction inequality~\eqref{eq:contraction_thm},
\begin{equation*}
  \bigl\lvert w^{(\ell+1)} - w^{*}\bigr\rvert
  \leq
  \frac{L}{\mu_{w}}\bigl\lvert w^{(\ell)} - w^{*}\bigr\rvert\cdot
  \bigl\lvert w^{(\ell)} - w^{*}\bigr\rvert
  \leq
  \kappa\bigl\lvert w^{(\ell)} - w^{*}\bigr\rvert,
\end{equation*}
and iterating this relation $\ell$ times from $\ell = 0$ yields the stated
geometric bound.
\end{proof}

\begin{theorem}
\label{thm:quadratic_rate}
Under Assumptions~\ref{assump:compact}--\ref{assump:subproblem_feasible},
the SCP iteration exhibits, in a neighborhood of $w^{*}$, a quadratic
convergence rate
\begin{equation}
  \bigl\lvert w^{(\ell+1)} - w^{*}\bigr\rvert
  \leq
  C_{q}\bigl\lvert w^{(\ell)} - w^{*}\bigr\rvert^{2},
  C_{q} = \frac{L}{2\mu_{w}}
  = \frac{T_{\max}}{2\mu_{w}m_{\mathrm{dry}}}.
  \label{eq:quadratic_rate_thm}
\end{equation}
\end{theorem}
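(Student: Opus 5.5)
The plan is to sharpen the contraction inequality~\eqref{eq:contraction_thm} from the proof of Theorem~\ref{thm:convergence_radius} by a factor of two. To do this, I will track the exact Lagrange remainder~\eqref{eq:taylor_remainder_thm} instead of its cruder mean-value bound.

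\emph{Step 1: compare the two stationarity conditions.} I will write the stationarity condition of Problem~4$^{(\ell)}$ with respect to the log-mass variable at its minimizer $w^{(\ell+1)}$. Its constraint function is the linearization $T_{\max}e^{-w^{(\ell)}}[1-(w-w^{(\ell)})]$, with slope $f'(w^{(\ell)})$. Next to it I will place the stationarity condition of Problem~2 at the KKT point $w^{*}$, whose slope is $f'(w^{*})$. At a fixed point these coincide, as in Theorem~\ref{thm:scp_convergence}(a). Subtracting the two conditions, all terms from the linear dynamics~\eqref{eq:lti} and the SOC constraint cancel. Theorem~\ref{thm:lossless} ensures the cone constraint is tight in both problems, so its multiplier structure matches.

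\emph{Step 2: isolate the curvature and the linearization error.} After the cancellation, two terms remain. The first is the curvature term $\mu_w\,(w^{(\ell+1)}-w^{*})$. The second is the mismatch between the linearized bound evaluated at $w^{*}$ and the true bound $f(w^{*})$. By~\eqref{eq:taylor_remainder_thm}, this mismatch equals the remainder $e^{(\ell)}(w^{*})=\tfrac12 f''(\xi^{(\ell)})(w^{*}-w^{(\ell)})^2$. Here I use the Lagrange form directly, keeping the factor $\tfrac12$, rather than applying the mean value theorem to a gradient difference, which is where the factor was lost before. By~\eqref{eq:remainder_bound_thm}, its magnitude is at most $\tfrac{L}{2}|w^{(\ell)}-w^{*}|^2$.

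\emph{Step 3: divide out the curvature.} Dividing by $\mu_w>0$ gives $|w^{(\ell+1)}-w^{*}|\le \tfrac{L}{2\mu_w}|w^{(\ell)}-w^{*}|^2$. Substituting $L=T_{\max}/m_{\mathrm{dry}}$ from Assumption~\ref{assump:lipschitz} yields the stated constant $C_q$. The neighborhood is any interval on which the iterates stay and $\mu_w$ stays positive. By Theorem~\ref{thm:convergence_radius} the ball of radius $r_{\mathrm{conv}}$ is forward invariant, and since $C_q r_{\mathrm{conv}}=\tfrac12<1$ the iteration stays within that ball.

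\emph{Main obstacle.} The hardest part is justifying Step~2 rigorously: one must show that the difference of the two optimality systems collapses to a scalar relation in $w$ with effective curvature $\mu_w$. This requires eliminating the other primal and dual variables through the linear KKT system. I would first try treating the subproblem solution as a function of $w^{(\ell)}$, using the strong convexity of Assumption~\ref{assump:strong_convexity} for uniqueness. I would then apply an implicit-function or Schur-complement argument to reduce to the $w$-block, whose reduced Hessian is bounded below by $\mu_w$. Bounding the perturbation there by the remainder completes the argument.
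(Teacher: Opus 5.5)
\textbf{There is a gap in Step~2.} The residual you isolate there is not the one your Step~1 subtraction produces.

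The stationarity conditions see the thrust-to-mass bound only through its slope in $w$. The subproblem carries the constant slope $f'(w^{(\ell)})$, and Problem~2 at the KKT point carries $f'(w^{*})$, as you note yourself in Step~1. Their difference, weighted by the bound multiplier, is
$\lambda_{\tau}^{*}\bigl(f'(w^{*})-f'(w^{(\ell)})\bigr)\approx\lambda_{\tau}^{*}f''(w^{*})\,(w^{*}-w^{(\ell)})=\mu_{w}\,(w^{*}-w^{(\ell)})$.

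This term is \emph{first} order, and it carries the same coefficient $\mu_{w}$ as the curvature term you divide by. Left in place, it yields only $\lvert w^{(\ell+1)}-w^{*}\rvert\le\lvert w^{(\ell)}-w^{*}\rvert+O(\lvert w^{(\ell)}-w^{*}\rvert^{2})$, which is not even a contraction.

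The Lagrange remainder $e^{(\ell)}(w^{*})$ that you use instead never enters the stationarity equations. It is the residual of the primal-feasibility and complementary-slackness block: the linearized bound at $w^{*}$ compared with $f(w^{*})$. Your estimate $\lvert e^{(\ell)}(w^{*})\rvert\le\tfrac{L}{2}\lvert w^{(\ell)}-w^{*}\rvert^{2}$ is correct, but it bounds the wrong quantity, and the slope mismatch from Step~1 silently drops out.

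The dynamics and SOC terms also do not cancel, because the two problems have different multipliers. Tightness of the cone does not make those multipliers equal. What remains is the dual-sensitivity term that the paper keeps explicitly.

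\textbf{How the paper handles this term.} The paper's proof works on exactly the term you lose. It keeps the gradient mismatch $f'(w^{*})-f'(w^{(\ell)})$ and expands it to second order about $w^{(\ell)}$. It then absorbs the linear piece $f''(w^{(\ell)})(w^{*}-w^{(\ell)})$ into the Hessian on the left of the KKT perturbation equation, treating the subproblem as a Newton step on the $w$-stationarity equation. The paper asserts this rather than derives it, since the linearized bound itself has no $f''$ term. Only the remainder is left, bounded by $\tfrac{L}{2}\lvert w^{(\ell)}-w^{*}\rvert^{2}$ using $\sup\lvert f'''\rvert=T_{\max}/m_{\mathrm{dry}}=L$.

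So the factor $\tfrac12$ comes from the second-order expansion of $f'$, not from switching to the Lagrange form of the function remainder. Your constant matches the paper's only because $\lvert f''\rvert=\lvert f'''\rvert$ for the exponential.

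If you want to keep $e^{(\ell)}$ as the driving term, you need a different argument: view the linearized bound as a Newton step on the \emph{active} constraint equations $\tau_{i}=f(w_{i})$. That requires the active bounds, with strict complementarity, to determine $w$ locally. You would then divide by the nonsingularity modulus of the active-constraint Jacobian, not by the Lagrangian curvature $\mu_{w}$.
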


\begin{proof}
The KKT stationarity condition of the subproblem at iteration $\ell+1$ and
that of the original problem at $w^{*}$ differ by a term involving
$f'(w^{*}) - f'(w^{(\ell)})$ (the mismatch between the true gradient of $f$
at $w^{*}$ and its value at the linearization point $w^{(\ell)}$, which the
linear approximation uses as a constant) plus a term of order
$\lvert w^{(\ell)} - w^{*}\rvert$ arising from the sensitivity of the dual
variable to the linearization point.  Applying a second-order Taylor
expansion to the gradient mismatch,
\begin{equation*}
  \bigl\lvert
    f'(w^{*}) - f'(w^{(\ell)}) - f''(w^{(\ell)})(w^{*} - w^{(\ell)})
  \bigr\rvert
  \leq
  \frac{L}{2}\bigl\lvert w^{(\ell)} - w^{*}\bigr\rvert^{2},
\end{equation*}
using the Lipschitz bound on $f''$ from Assumption~\ref{assump:lipschitz}
applied to $f' = -T_{\max}e^{-w}$, whose own derivative $f''$ satisfies
$\sup_{w}\lvert f'''(w)\rvert = T_{\max}/m_{\mathrm{dry}} = L$.  The
first-order term $f''(w^{(\ell)})(w^{*}-w^{(\ell)})$ is absorbed into the
Hessian on the left-hand side of the KKT perturbation equation, leaving a
residual of order $\lvert w^{(\ell)}-w^{*}\rvert^{2}$ on the right-hand
side.  Dividing through by the curvature $\mu_{w}$ gives the stated
quadratic bound with constant $C_{q} = L/(2\mu_{w})$.
\end{proof}

\begin{remark}
\label{remark:two_phase}
Theorems~\ref{thm:linear_rate} and~\ref{thm:quadratic_rate} together explain
the two-phase convergence pattern observed numerically in
Section~\ref{sec:simulations}.  In the first one to two iterations, the
iterate $w^{(\ell)}$ may lie outside the quadratic convergence neighborhood
$\{w : \lvert w - w^{*}\rvert < 1/C_{q}\}$, and the contraction proceeds at
the slower, linear rate of Theorem~\ref{thm:linear_rate}.  Once the iterate
enters this neighborhood, the quadratic bound of
Theorem~\ref{thm:quadratic_rate} dominates and the error decreases by orders
of magnitude in a single additional iteration, consistent with the rapid
drop in the objective value and state-trajectory difference observed between
the second and third SCP iterations.  Using the numerical values of
Remark~\ref{remark:r_conv_numerical}, $C_{q} = L/(2\mu_{w}) \approx
44/(2 \times 0.503) \approx 43.7$, so the quadratic regime is entered once
$\lvert w^{(\ell)} - w^{*}\rvert$ falls below approximately $1/C_{q}
\approx 0.023$.
\end{remark}

\begin{theorem}
\label{thm:online_rate}
Let $\varepsilon_{\mathrm{est}}
= \bigl\lVert\tilde{\mathbf{b}}_{k} - (\mathbf{b} + \boldsymbol{\delta}_{k}^{*})\bigr\rVert_{2}
= \bigl\lVert\tilde{\boldsymbol{\delta}}_{k} - \boldsymbol{\delta}_{k}^{*}\bigr\rVert_{2}$
denote the perturbation estimation error at a given replanning instant,
where $\boldsymbol{\delta}_{k}^{*}$ is the (unknown) true perturbation.
Under Assumptions~\ref{assump:bounded_pert}--\ref{assump:subproblem_feasible},
the inner SCP iterates of the online algorithm (Algorithm~\ref{alg:online_scp})
satisfy
\begin{equation}
  \bigl\lvert w^{(\ell+1)} - w_{b}^{*}\bigr\rvert
  \;\leq\;
  C_{q}\,\bigl\lvert w^{(\ell)} - w_{b}^{*}\bigr\rvert^{2}
  + \frac{\varepsilon_{\mathrm{est}}}{\mu_{w}},
  \label{eq:online_rate_thm}
\end{equation}
where $w_{b}^{*}$ denotes the optimal log-mass for the problem with the true
bias $\mathbf{b} + \boldsymbol{\delta}_{k}^{*}$.  Consequently, the
iterates converge not to $w_{b}^{*}$ exactly, but to a residual
neighborhood of radius
\begin{equation}
  r_{\mathrm{res}}
  =
  \frac{1 - \sqrt{1 - 4C_{q}\varepsilon_{\mathrm{est}}/\mu_{w}}}
       {2C_{q}}
  \approx
  \frac{\varepsilon_{\mathrm{est}}}{\mu_{w}}
  \text{for small } \varepsilon_{\mathrm{est}}.
  \label{eq:residual_ball_thm}
\end{equation}
\end{theorem}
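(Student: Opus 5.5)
The plan is to treat the online inner SCP iteration at a fixed replanning instant as a perturbation of the exact-model SCP iteration whose fixed point is $w_{b}^{*}$. Theorem~\ref{thm:quadratic_rate} then supplies the quadratic part, and a separate sensitivity bound on the convex subproblem with respect to its bias term supplies the additive $\varepsilon_{\mathrm{est}}/\mu_{w}$ part. I write $\Phi_{\mathbf{c}}(w^{(\ell)})$ for the log-mass component of the optimal solution of Problem~5$^{(\ell)}$ when the bias in \eqref{eq:online_dyn} is $\mathbf{c}$. The algorithm produces $w^{(\ell+1)} = \Phi_{\tilde{\mathbf{b}}_{k}}(w^{(\ell)})$. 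The reference map is $\Phi_{\mathbf{b}+\boldsymbol{\delta}_{k}^{*}}$, which has $w_{b}^{*}$ as a fixed point. The triangle inequality gives
\begin{equation*}
\bigl\lvert w^{(\ell+1)} - w_{b}^{*}\bigr\rvert \le \bigl\lvert \Phi_{\mathbf{b}+\boldsymbol{\delta}_{k}^{*}}(w^{(\ell)}) - w_{b}^{*}\bigr\rvert + \bigl\lvert \Phi_{\tilde{\mathbf{b}}_{k}}(w^{(\ell)}) - \Phi_{\mathbf{b}+\boldsymbol{\delta}_{k}^{*}}(w^{(\ell)})\bigr\rvert .
\end{equation*}

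The first term is handled by the paper's argument in Theorem~\ref{thm:quadratic_rate}. That argument is insensitive to the value of the constant bias, since $\mathbf{b}$ enters only the right-hand side of the linear equality and not $f$, the objective, or the cone. Applied with bias $\mathbf{b}+\boldsymbol{\delta}_{k}^{*}$, it bounds the first term by $C_{q}\lvert w^{(\ell)} - w_{b}^{*}\rvert^{2}$. I will state explicitly that $\mu_{w}$ and $L$ are taken at the true-bias KKT point.

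For the second term, I fix the linearization point $w^{(\ell)}$, so both problems are the same strongly convex program (Assumption~\ref{assump:strong_convexity}), differing only in the constant term $\Delta t_{k}\,\mathbf{c}$ of the equality constraints. I would write the KKT systems of both problems and subtract them. I would then use the strong monotonicity of the reduced Lagrangian in $w$, with curvature $\mu_{w}$, to bound the change in the $w$-component by the size of the constraint-residual perturbation divided by $\mu_{w}$. This is the standard Lipschitz stability of solutions of strongly convex parametric programs in the right-hand-side parameter. It yields a bound of the form $\varepsilon_{\mathrm{est}}/\mu_{w}$, with the factor $\Delta t_{k}$ and the relevant operator norms absorbed, as in the paper's normalization.

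This step is the main obstacle. Strong convexity in Assumption~\ref{assump:strong_convexity} is stated only for $\tau$, and $w$ is coupled to $\tau$ only through the dynamics and the linearized bound. A rigorous constant therefore needs a regularity condition, such as LICQ or uniqueness of multipliers, so that the perturbation propagates boundedly through the multipliers. I would first try to assume this regularity implicitly, as the paper does in Theorem~\ref{thm:convergence_radius}, and identify $\mu_{w}$ as the effective curvature.

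Combining the two bounds gives \eqref{eq:online_rate_thm}. For the residual radius, I set $e_{\ell} = \lvert w^{(\ell)} - w_{b}^{*}\rvert$ and compare with the scalar recursion $e_{\ell+1} \le g(e_{\ell})$, where $g(e) = C_{q}e^{2} + \varepsilon_{\mathrm{est}}/\mu_{w}$. When $4C_{q}\varepsilon_{\mathrm{est}}/\mu_{w} < 1$, the equation $g(e) = e$ has two real roots, and the smaller one is $r_{\mathrm{res}}$ in \eqref{eq:residual_ball_thm}. The map $g$ is increasing on $e \ge 0$. It satisfies $g(e) < e$ strictly between the roots and $g(e) \ge e$ below the smaller root. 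Hence any start below the larger root produces iterates that are eventually bounded by, or converge monotonically down to, $r_{\mathrm{res}}$. A first-order expansion of the square root gives $r_{\mathrm{res}} \approx \varepsilon_{\mathrm{est}}/\mu_{w}$.
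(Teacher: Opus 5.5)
Your proposal follows the paper's own argument. The paper likewise gets the quadratic term by rerunning the KKT-perturbation reasoning of Theorem~\ref{thm:quadratic_rate} with the bias mismatch $\Delta t_{k}\varepsilon_{\mathrm{est}}$ as an extra forcing term, bounds that term's effect on $w$ by $1/\mu_{w}$ through an implicit-function sensitivity claim it does not carry out, and takes $r_{\mathrm{res}}$ as the smaller fixed point of $r = C_{q}r^{2} + \varepsilon_{\mathrm{est}}/\mu_{w}$. Your split through $\Phi_{\mathbf{b}+\boldsymbol{\delta}_{k}^{*}}(w^{(\ell)})$ and your comparison argument for the scalar recursion (which makes explicit the conditions $4C_{q}\varepsilon_{\mathrm{est}}/\mu_{w} < 1$ and an initial error below the larger root) are a cleaner rendering of the same steps, and the sensitivity bound you flag as the main obstacle is left at exactly the same heuristic level in the paper.
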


\begin{proof}
The online SCP subproblem uses the corrected bias $\tilde{\mathbf{b}}_{k}$
in place of the true bias $\mathbf{b} + \boldsymbol{\delta}_{k}^{*}$ in the
dynamics constraint~\eqref{eq:online_dyn}.  The resulting mismatch
contributes an additional term of size $\Delta t_{k}\,\varepsilon_{\mathrm{est}}$
to the right-hand side of the dynamics equality at every node.  Repeating the
proof of Theorem~\ref{thm:quadratic_rate} with this additional forcing term,
and using the implicit function theorem for parametric convex programs to
bound the sensitivity of the optimal $w$ to the bias error by $1/\mu_{w}$,
yields
\begin{equation*}
  \mu_{w}\,\bigl\lvert w^{(\ell+1)} - w_{b}^{*}\bigr\rvert
  \;\leq\;
  \frac{L}{2}\,\bigl\lvert w^{(\ell)} - w_{b}^{*}\bigr\rvert^{2}
  + \varepsilon_{\mathrm{est}},
\end{equation*}
which, after dividing by $\mu_{w}$, gives~\eqref{eq:online_rate_thm}.  The
residual ball radius $r_{\mathrm{res}}$ is obtained as the fixed point of
the recursion $r = C_{q}\,r^{2} + \varepsilon_{\mathrm{est}}/\mu_{w}$,
which has the closed-form solution given
in~\eqref{eq:residual_ball_thm}. The approximation for small
$\varepsilon_{\mathrm{est}}$ follows from the first-order expansion
$1 - \sqrt{1-x} \approx x/2$.
\end{proof}

\begin{remark}
\label{remark:exact_recovery}
As the RLS and EMA filters converge (Theorems~\ref{thm:rls_convergence}
and~\ref{thm:ema_stability}), the estimation error $\varepsilon_{\mathrm{est}}$
decreases toward the steady-state floor characterized in
Remark~\ref{remark:optimal_lambda}.  In the idealized limit of perfect
identification, $\varepsilon_{\mathrm{est}} \to 0$, the residual ball radius
$r_{\mathrm{res}} \to 0$ and the online SCP recovers the exact quadratic
convergence rate of Theorem~\ref{thm:quadratic_rate}.  In practice, the
residual ball radius is bounded by the noise floor of the estimator, so the
online SCP trajectory tracks the true optimal trajectory to within an error
that is fundamentally limited by the navigation noise level $\sigma$ and the
forgetting factor $\lambda$, rather than by any limitation of the SCP
algorithm itself.
\end{remark}

Collectively, these theoretical results show that the convex relaxation introduces no
loss of optimality, that the online perturbation estimator is stable and
converges in a mean-square sense, that the baseline SCP algorithm is
guaranteed to converge to a KKT point of the original nonconvex problem from
within an explicitly characterized convergence radius, and that the rate of
convergence transitions from linear to quadratic as the iterates approach
the optimum, with the online extension converging to a residual neighborhood
whose size is governed entirely by the quality of the online perturbation
estimate.  The numerical validation of these theoretical predictions is
presented in Section~\ref{sec:simulations}.

\section{Numerical Simulations}
\label{sec:simulations}

This section presents numerical simulations that validate the methods developed in previous sections.  All simulations are
implemented in MATLAB, with the convex subproblems modeled using
YALMIP \cite{lofberg2004yalmip} and solved with the ECOS interior-point
SOCP solver \cite{domahidi2013ecos}.  The
nominal (baseline) SCP is first run to establish a reference trajectory and
to confirm the convergence behavior predicted in
Section~\ref{sec:theoretical_analysis}.  The online SCP
algorithm with model identification is then exercised under four
perturbation scenarios, and its closed-loop performance is compared against
both the nominal plan and the (unknown, ground-truth) perturbed trajectory.


The nominal scenario considered throughout this section is a short-range
lunar powered descent beginning approximately one kilometer above the
designated landing site, with lateral position and velocity offsets that
require the lander to correct its horizontal drift while simultaneously
decelerating its vertical descent rate.  Specifically, the lander begins at
position $\mathbf{r}_{0} = [100,\,200,\,1000]^{\mathsf{T}}\ \mathrm{m}$ in
the ENU frame defined in Section~\ref{sec:problem_formulation}, with initial
velocity $\mathbf{v}_{0} = [10,\,10,\,-5]^{\mathsf{T}}\ \mathrm{m/s}$,
indicating simultaneous lateral drift in both horizontal directions and a
modest descent rate.  The target landing site is the origin of the frame,
$\mathbf{r}_{f} = [0,\,0,\,0]^{\mathsf{T}}\ \mathrm{m}$, with a soft-landing
terminal velocity requirement $\mathbf{v}_{f} = [0,\,0,\,0]^{\mathsf{T}}
\ \mathrm{m/s}$.  The lander begins the descent with a wet mass of
$m_{0} = 10{,}000\ \mathrm{kg}$ and must not deplete its propellant below
the dry mass $m_{\mathrm{dry}} = 1{,}000\ \mathrm{kg}$, giving a maximum
available propellant mass of $9{,}000\ \mathrm{kg}$, or $90\%$ of the
initial mass.  The descent is constrained to a fixed time of flight of
$t_{f} = 50\ \mathrm{s}$, and the maximum thrust available from the
propulsion system is $T_{\max} = 44{,}000\ \mathrm{N}$.  This combination of
a relatively short time of flight, a substantial lateral correction
requirement, and a generous but finite propellant budget produces a
descent profile in which the energy-optimal solution executes a
pronounced lateral arc before settling into a nearly vertical terminal
approach, as confirmed in the plots discussed later in this section.


\Cref{tab:sim_params_numerical} summarizes the key parameter settings used throughout the numerical simulations.

\begin{table}[ht]
\renewcommand{\arraystretch}{1.3}
\caption{\textbf{Key simulation parameter settings}}
\label{tab:sim_params_numerical}
\centering
\begin{tabular}{|c|c|}
\hline
\bfseries Parameter & \bfseries Value \\
\hline\hline
    Lunar gravitational acceleration, $g$           & $1.6229\ \mathrm{m/s^{2}}$ \\
    Earth sea-level gravity, $g_{0}$                 & $9.81\ \mathrm{m/s^{2}}$ \\
    Specific impulse, $I_{\mathrm{sp}}$              & $311\ \mathrm{s}$ \\
    Initial (wet) mass, $m_{0}$                      & $10{,}000\ \mathrm{kg}$ \\
    Dry mass, $m_{\mathrm{dry}}$                      & $1{,}000\ \mathrm{kg}$ \\
    Maximum thrust magnitude, $T_{\max}$              & $44{,}000\ \mathrm{N}$ \\
    Number of nodes (full horizon), $N$               & $200$ \\
    Convergence tolerance, $\varepsilon_{j}$        & $0.001$ \\
    Initialization                                     & Straight line \\
    Replanning cadence, $n_{\mathrm{replan}}$          & $10$ guidance steps \\
    Reduced-horizon node count, $N_{\mathrm{plan}}$     & $30$ \\
    Maximum inner iterations, $\ell_{\max}^{\mathrm{on}}$ & $5$ \\
    Forgetting factor, $\lambda$                       & $0.98$ \\
    Initial RLS covariance scalar, $p_{0}$              & $10^{4}$ \\
    EMA smoothing weight, $\alpha$                      & $0.3$ \\
\hline
\end{tabular}
\end{table}

\subsection{Perturbation Scenarios}
\label{subsec:sim_scenarios}

Four perturbation scenarios, corresponding to the uncertainty sources
introduced in Section~\ref{sec:model_identification}, are used to exercise the online
identification and replanning framework under progressively more demanding
conditions.  In each scenario, a closed-loop truth propagator integrates
the lander dynamics forward using the true (perturbed) bias, while the
online SCP algorithm has access only to noisy state measurements and must
infer the perturbation through the RLS and EMA filters described in
Section~\ref{sec:model_identification}.

\textbf{Scenario 1: Constant Mascon Anomaly.}
A spatially uniform gravitational anomaly of
$\mathbf{d}_{\mathrm{c}} = [0.05,\,0.05,\,0.03]^{\mathsf{T}}\ \mathrm{m/s^{2}}$
is added to the velocity dynamics for the entire descent, as defined in
\eqref{eq:pert_const}.  This scenario represents the simplest case for
the identification framework, since the perturbation is constant and the
RLS filter need only converge once at the start of the descent and then
track a fixed value.

\textbf{Scenario 2: Sinusoidal Gravitational Perturbation.}
A time-varying anomaly with amplitude $A_{\sin} = 0.04\ \mathrm{m/s^{2}}$
and frequency $\omega_{\sin} = 2\pi/25\ \mathrm{rad/s}$ is applied
according to \eqref{eq:pert_sin}, with a $\pi/4$ phase offset between
the east and north channels.  This scenario tests the RLS filter's ability
to track a perturbation that varies continuously over a period comparable
to the descent duration itself, exercising the bias--variance tradeoff
discussed in Section~\ref{sec:theoretical_analysis}.

\textbf{Scenario 3: Mass-Gauging and Thrust-Scale Error.}
A $4\%$ thrust-scale bias ($\eta_{T} = 0.04$) and a $5\%$ mass-gauging
error ($\eta_{m} = 0.05$) are applied simultaneously, following
\eqref{eq:pert_thrust} and~\eqref{eq:pert_mass}.  Unlike the
gravitational perturbations of Scenarios 1 and 2, this perturbation is
state- and control-dependent: it scales with the commanded thrust
components, so its magnitude grows as the thrust profile itself increases
toward the end of the descent.

\textbf{Scenario 4: Combined Perturbation.}
All three perturbation sources from Scenarios 1--3 are superimposed
simultaneously, representing a worst-case stress test in which the
gravitational anomaly, the sinusoidal time-varying component, and the
thrust- and mass-dependent errors all act on the lander concurrently.
This scenario evaluates whether the online identification framework
remains effective when the lumped perturbation no longer corresponds to
any single simple model.

In every scenario, the navigation measurement noise described in
Section~\ref{sec:model_identification} (with $\sigma_{r} = 0.5\ \mathrm{m}$,
$\sigma_{v} = 0.05\ \mathrm{m/s}$, and $\sigma_{w} = 0.001$) is added to the
true state at each guidance step before it is passed to the RLS filter,
so the identification problem reflects realistic sensor noise in addition
to the unknown deterministic perturbation.

Figure~\ref{fig:obj} plots the SCP objective value $J^{(\ell)}$ against the
iteration number for the baseline (nominal) SCP run.  The objective starts
at approximately $254.7505$ at the first iteration, drops sharply to
approximately $254.7475$ by the second iteration, and remains essentially
unchanged at the third iteration, at which point the convergence
criterion~\eqref{eq:conv_crit} is satisfied.  This rapid, near-immediate
convergence is consistent with the theoretical discussion of
Section~\ref{sec:theoretical_analysis}. Because the straight-line
initialization produces an initial log-mass error well within the
convergence radius, and because the only nonlinearity in the problem (the
thrust-to-mass exponential bound) is mild over the narrow log-mass
operating range, the SCP iteration enters its quadratic-convergence regime
almost immediately, producing the characteristic single large drop in
objective value followed by negligible further change.

\begin{figure}[H]
\centering
\includegraphics[width=3.25in]{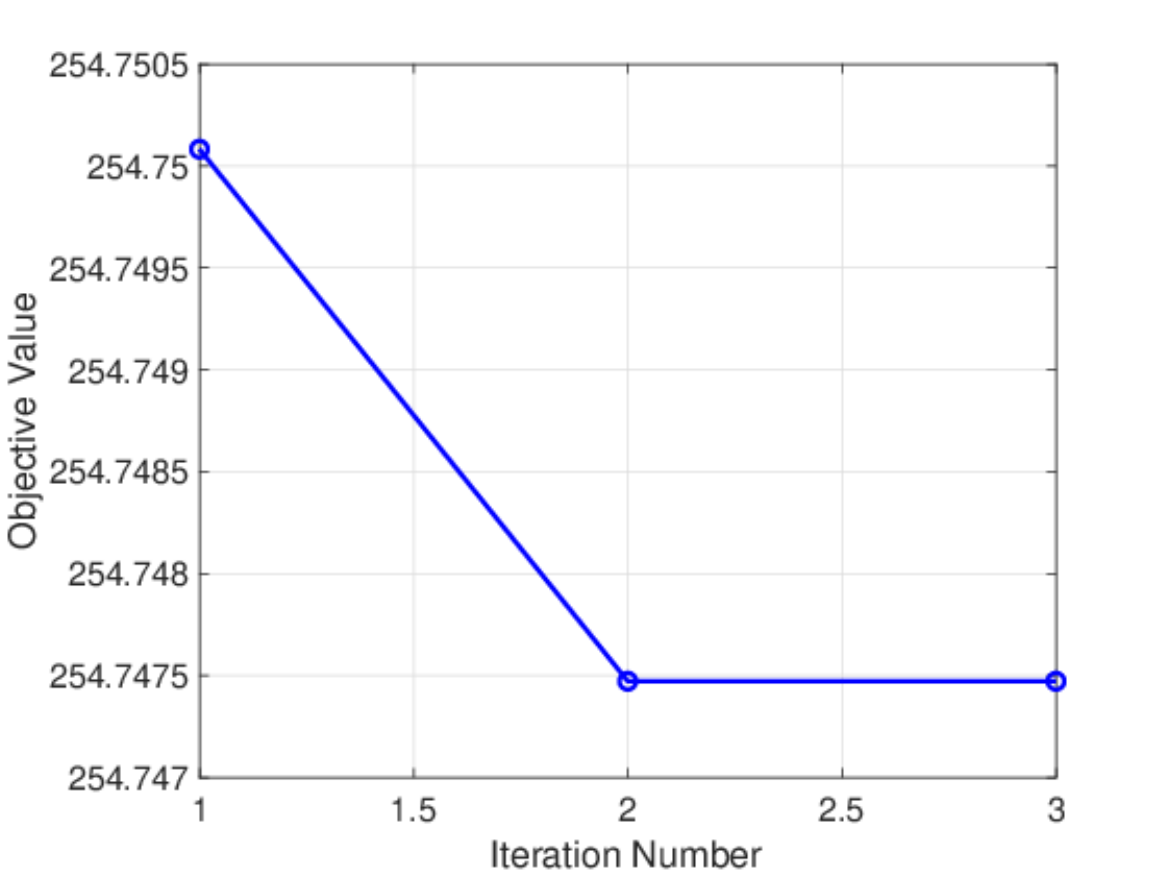}\\
\caption{Convergence of objective (nominal SCP).}
\label{fig:obj}
\end{figure}

\subsection{Scenario 1: Constant Mascon Anomaly}

Figures~\ref{fig:traj_1}--\ref{fig:mass_perturb_1} present the detailed results for Scenario~1.
Figure~\ref{fig:traj_1} shows the converged trajectory in 3D
position space, beginning at the start point approximately one
kilometer above and offset from the landing site, executing a smooth
lateral arc that simultaneously reduces the east and north position
offsets, and terminating precisely at the target landing site.  The trajectory shape (an initial shallow lateral correction
followed by a steepening, nearly vertical final approach) is consistent
with energy-optimal descent profiles reported in the literature and
reflects the SCP solution's efficient use of the available time of flight
to spread the lateral correction over the early portion of the descent
while reserving the final seconds for vertical deceleration.

Figure~\ref{fig:mass_1} overlays the vehicle mass trajectory $m(t) = \exp(w(t))$ for
the nominal (uncorrected) plan, the online (corrected) plan, and the true
mass actually consumed by the perturbed lander, for Scenario~1.  The
nominal mass profile, which assumes no perturbation, depletes propellant
more slowly than the true trajectory, since the constant mascon anomaly of
Scenario~1 requires additional thrust beyond the nominal prediction to
counteract the gravitational anomaly.  The online (corrected) mass profile
closely tracks the true mass consumption after the RLS filter has had a
brief initial period to converge, demonstrating that the online
identification and replanning mechanism successfully detects and
compensates for the additional propellant demand imposed by the
unmodeled perturbation.

Figure~\ref{fig:thrust_1}  compares the nominal and online thrust magnitude profiles
against the maximum available thrust $T_{\max}$.  The nominal profile
follows a smooth, classical energy-optimal shape: starting at a moderate
level, dipping to a minimum near the midpoint of the descent, and rising
toward the end as the vehicle decelerates for touchdown.  The online
thrust profile exhibits visible oscillations superimposed on this same
overall shape. These oscillations arise from the frequency-gated
replanning strategy, in which the commanded thrust is held fixed between
replanning events and then adjusted in a step change at each
$n_{\mathrm{replan}}$-step replan, producing the characteristic
sawtooth-like pattern visible in the figure.  Critically, the online
thrust never approaches the maximum thrust limit $T_{\max}$, confirming
the assumption used in \Cref{remark:r_conv_numerical} that the
thrust bound remains inactive throughout the energy-optimal descent even
under perturbation.

Figure~\ref{fig:perturb_1} shows the three velocity-channel components of the true
perturbation, the raw RLS estimate, and the EMA-filtered estimate over the
course of the descent for Scenario~1.  All three estimated curves converge
toward the constant true perturbation values
($0.05$, $0.05$, and $0.03\ \mathrm{m/s^{2}}$)
within the first several seconds of the descent, after which the EMA-filtered
estimate (the one actually used to correct the dynamics model) tracks the
true value closely with only small residual fluctuations attributable to
navigation measurement noise.  This behavior is consistent with the
mean-square convergence behavior discussed in
Section~\ref{sec:theoretical_analysis}: an initial transient as the filter
state moves away from its zero initialization, followed by a steady-state
tracking regime whose accuracy is governed by the noise level and the
forgetting factor.

The upper panel of Figure~\ref{fig:mass_perturb_1}  shows the mass-flow perturbation channel
$\delta_{w}$, which is identically zero for Scenario~1 (since the constant
mascon anomaly affects only the velocity channels). The RLS and EMA estimates correctly converge
toward this zero value after an initial transient, correctly avoiding a
false detection of a mass-channel perturbation that does not exist in this
scenario.  The lower panel compares the log-mass trajectory for the nominal, online, and true cases, showing that the online and true
log-mass trajectories track one another closely while the nominal
trajectory, which does not account for the additional propellant demand
of the mascon anomaly, diverges visibly toward the end of the descent.

\subsection{Scenario 2: Sinusoidal Gravitational Perturbation}

Figures~\ref{fig:traj_2}--\ref{fig:thrust_2} present the trajectory, mass, and thrust profiles for Scenario~2, which are consistent with the
discussion above in Scenario~1. The truth trajectory (the
trajectory actually flown under the true perturbation) is also overlaid,
confirming visually that both the online corrected plan and the true
trajectory remain close to one another and only mildly diverge from the
overly optimistic nominal plan.
Figure~\ref{fig:perturb_2} shows the velocity-channel perturbation estimation for the sinusoidal gravitational perturbation of Scenario~2.  The true
perturbation now oscillates continuously according to
\eqref{eq:pert_sin} rather than remaining constant, and the EMA-filtered
estimate visibly tracks the slow oscillation with a small phase lag,
consistent with the lag term identified in the mean-square error
discussion of Section~\ref{sec:theoretical_analysis}: because the forgetting
factor $\lambda = 0.98$ is tuned for slowly drifting perturbations, the
filter smooths over some of the faster oscillation detail while still
capturing the overall trend and amplitude. Figure~\ref{fig:mass_perturb_2} confirms that the
mass-channel perturbation remains correctly identified as approximately
zero throughout, and that the online log-mass trajectory continues to track
the true trajectory more closely than the nominal trajectory does.

\subsection{Scenario 3: Mass Gauging Error}

Figures~\ref{fig:traj_3}--\ref{fig:thrust_3} present the results for Scenario~3.  The overall trajectory
shape is essentially unchanged from the nominal case, since neither the thrust-scale bias nor the mass-gauging error substantially redirects the lateral correction maneuver.  The
thrust profile again shows the characteristic step-change pattern induced
by the frequency-gated replanning, with the online profile oscillating
around the nominal profile as the perturbation estimate itself oscillates.
Figures~\ref{fig:perturb_3}--\ref{fig:mass_perturb_3} present the velocity- and mass-channel perturbation
estimation results for the thrust-scale and mass-gauging error of
Scenario~3.  Unlike the gravitational perturbations of Scenarios~1 and~2,
this perturbation is proportional to the commanded thrust components and
therefore grows in magnitude as the descent progresses and the
required thrust increases. The true perturbation curves in Figure~\ref{fig:perturb_3}
visibly grow over time rather than remaining at a constant amplitude, and
the RLS and EMA estimates track this growing trend reasonably well,
though with somewhat larger transient deviations than in the constant-perturbation
case of Scenario~1, reflecting the more challenging, state-dependent
nature of this perturbation source. Figure~\ref{fig:mass_perturb_3} confirms a small but nonzero
mass-channel perturbation $\delta_{w}$ that is correctly identified by the
filters, consistent with the mass-gauging error model of
\eqref{eq:pert_mass}.

\subsection{Scenario 4: Combined (Scenarios 1--3 Simultaneously)}

Figures~\ref{fig:traj_4}--\ref{fig:thrust_4} parallel the previous trajectory, mass, and thrust
comparisons.  The vehicle mass profile shows the largest
divergence between the nominal and true/online trajectories among the
first three scenarios, consistent with the growing perturbation magnitude
noted above. The online corrected plan nonetheless continues to track the
true mass depletion closely, demonstrating that the online identification
framework remains effective even when the underlying perturbation is
state-dependent rather than purely a function of time.
Figures~\ref{fig:perturb_4}--\ref{fig:mass_perturb_4} present the velocity- and mass-channel
perturbation estimation results for the combined scenario, in
which all three perturbation sources are superimposed.  The true
perturbation curves in Figure~\ref{fig:perturb_4} exhibit the most complex behavior among
all four scenarios, combining a constant offset, a sinusoidal oscillation,
and a growing thrust-dependent trend. Despite this complexity, the
EMA-filtered estimate continues to track the overall shape and magnitude
of the combined perturbation reasonably well, though with visibly larger
instantaneous tracking error than in the simpler scenarios, particularly
during the more rapidly varying portions of the descent. Figure~\ref{fig:mass_perturb_4}
confirms that the mass-channel perturbation, now driven by the combined
mass-gauging error contribution, is also tracked successfully. Collectively,
the Scenario~4 results demonstrate that the online identification and
replanning framework degrades gracefully rather than catastrophically when
confronted with a perturbation that does not correspond to any single
simple model, consistent with the discussion in
Section~\ref{sec:theoretical_analysis} that the RLS and EMA filters operate on
the lumped perturbation vector without requiring it to match any particular
parametric structure.


Across all four scenarios, the online SCP algorithm with model
identification consistently reduces the landing position and velocity
errors relative to the uncorrected nominal plan, with the magnitude of the
improvement scaling with the severity and complexity of the perturbation:
the constant mascon anomaly of Scenario~1, being the simplest perturbation
to identify, shows the smallest residual error in the online solution,
while the combined stress test of Scenario~4 shows the largest residual
error among the online results, though still substantially smaller than
the corresponding nominal error.  This pattern is consistent with the
theoretical discussion of Section~\ref{sec:theoretical_analysis}, in which the online SCP
was shown to converge not to the exact optimum of the true perturbed
problem but to a residual neighborhood whose size scales with the
perturbation estimation error. The more complex and rapidly varying
perturbations of Scenarios~2 through~4 are inherently more difficult for
the RLS and EMA filters to track exactly, leading to a correspondingly
larger (but still substantially reduced relative to the nominal case)
residual landing error. The propellant consumption results similarly show
that the online corrected plan consumes propellant in a pattern much
closer to the true required consumption than the overly optimistic nominal
plan, which in every scenario underestimates the propellant required to
counteract the unmodeled perturbation.

\begin{figure}[H]
\centering
\includegraphics[width=3.25in]{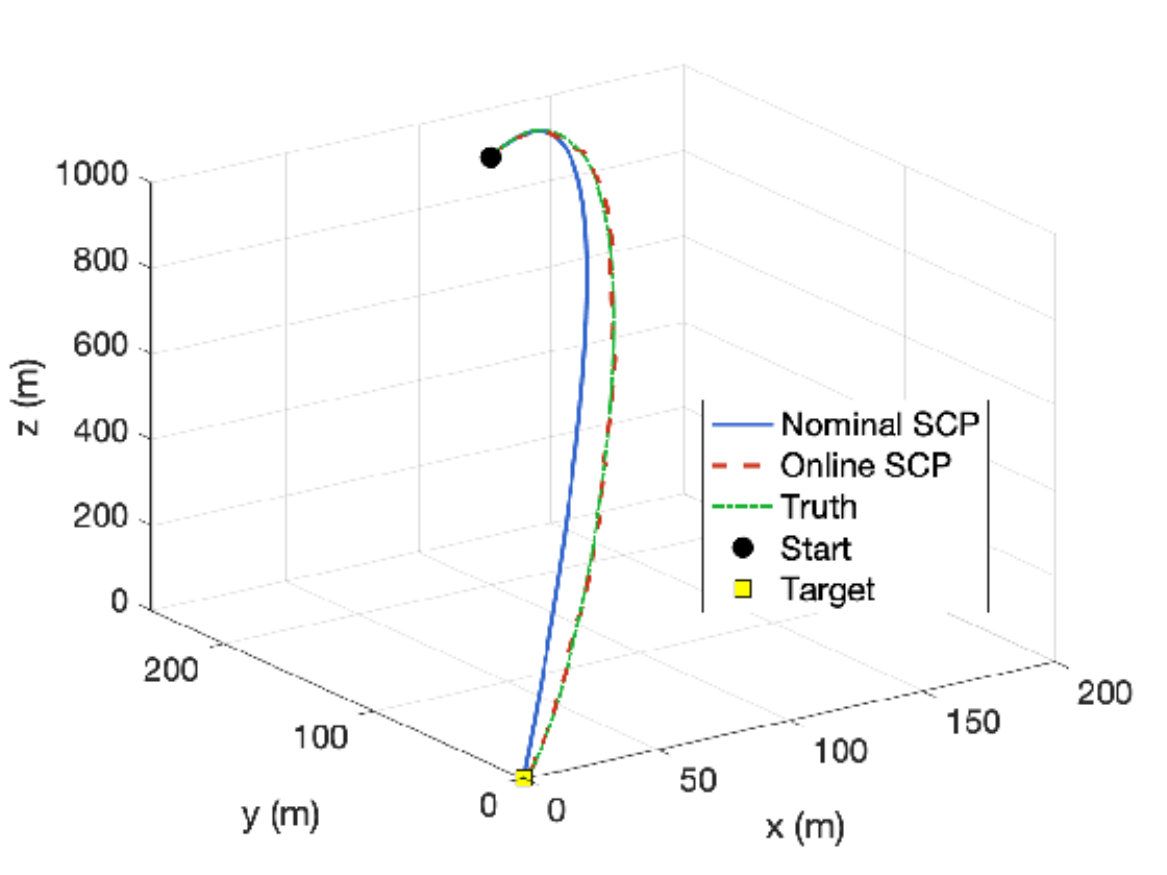}\\
\caption{3D landing trajectory (Scenario~1).}
\label{fig:traj_1}
\end{figure}

\begin{figure}[H]
\centering
\includegraphics[width=3.25in]{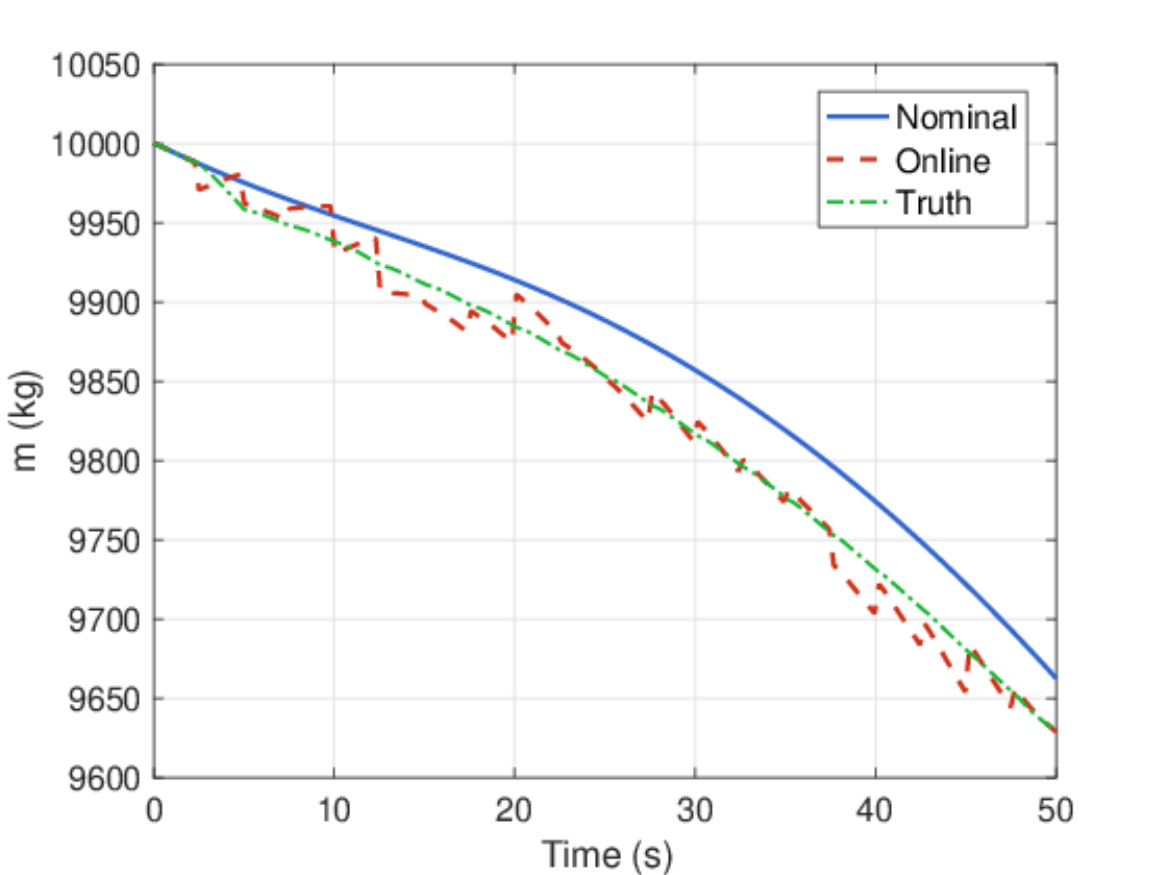}\\
\caption{Vehicle mass profile (Scenario~1).}
\label{fig:mass_1}
\end{figure}

\begin{figure}[H]
\centering
\includegraphics[width=3.25in]{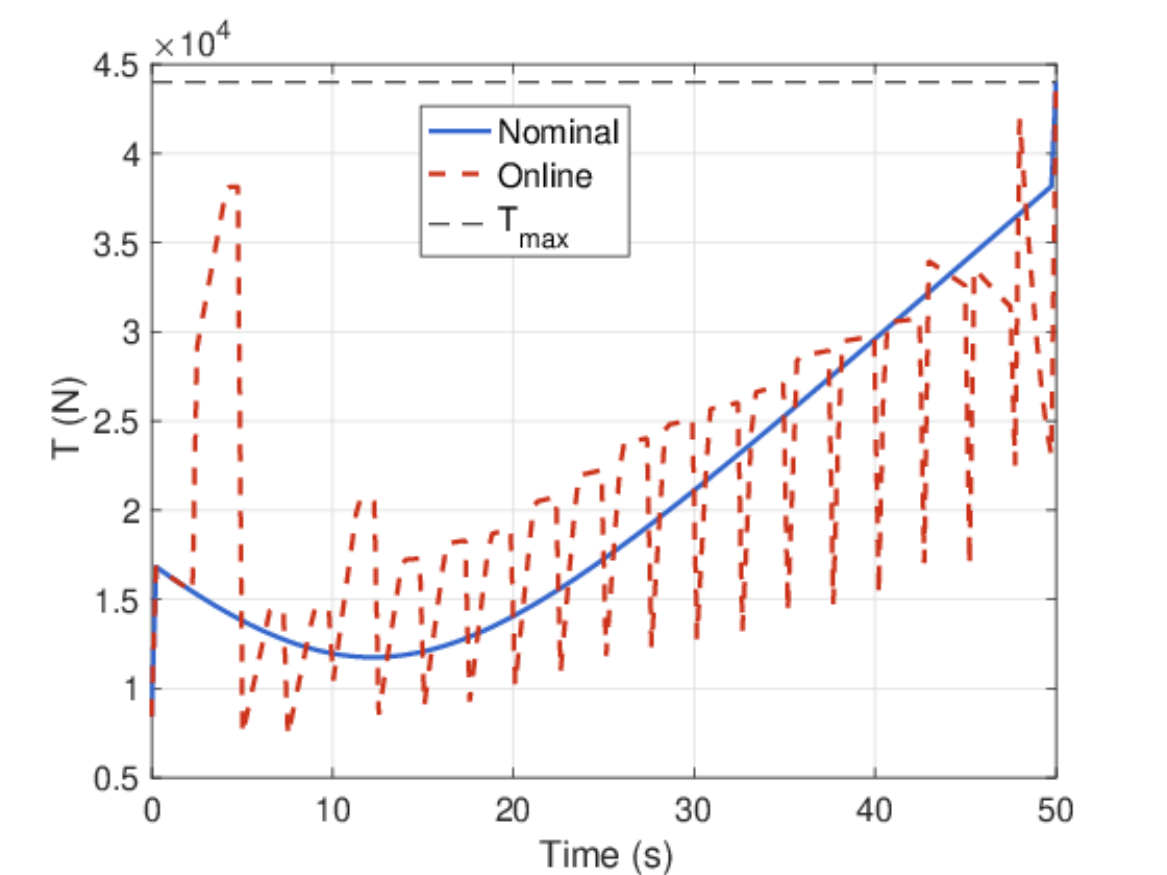}\\
\caption{Thrust magnitude profile (Scenario~1).}
\label{fig:thrust_1}
\end{figure}

\begin{figure}[H]
\centering
\includegraphics[width=3.25in]{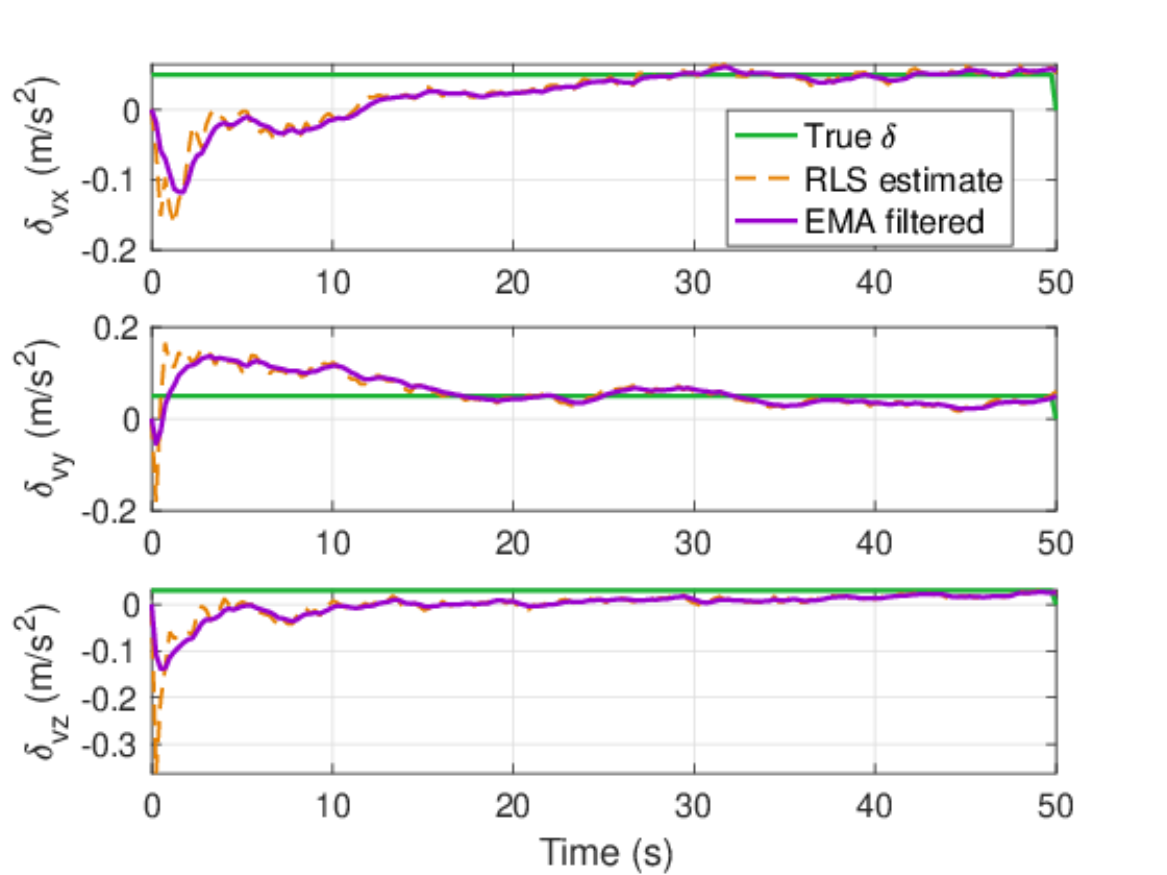}\\
\caption{Perturbation estimation (Scenario~1).}
\label{fig:perturb_1}
\end{figure}

\begin{figure}[H]
\centering
\includegraphics[width=3.25in]{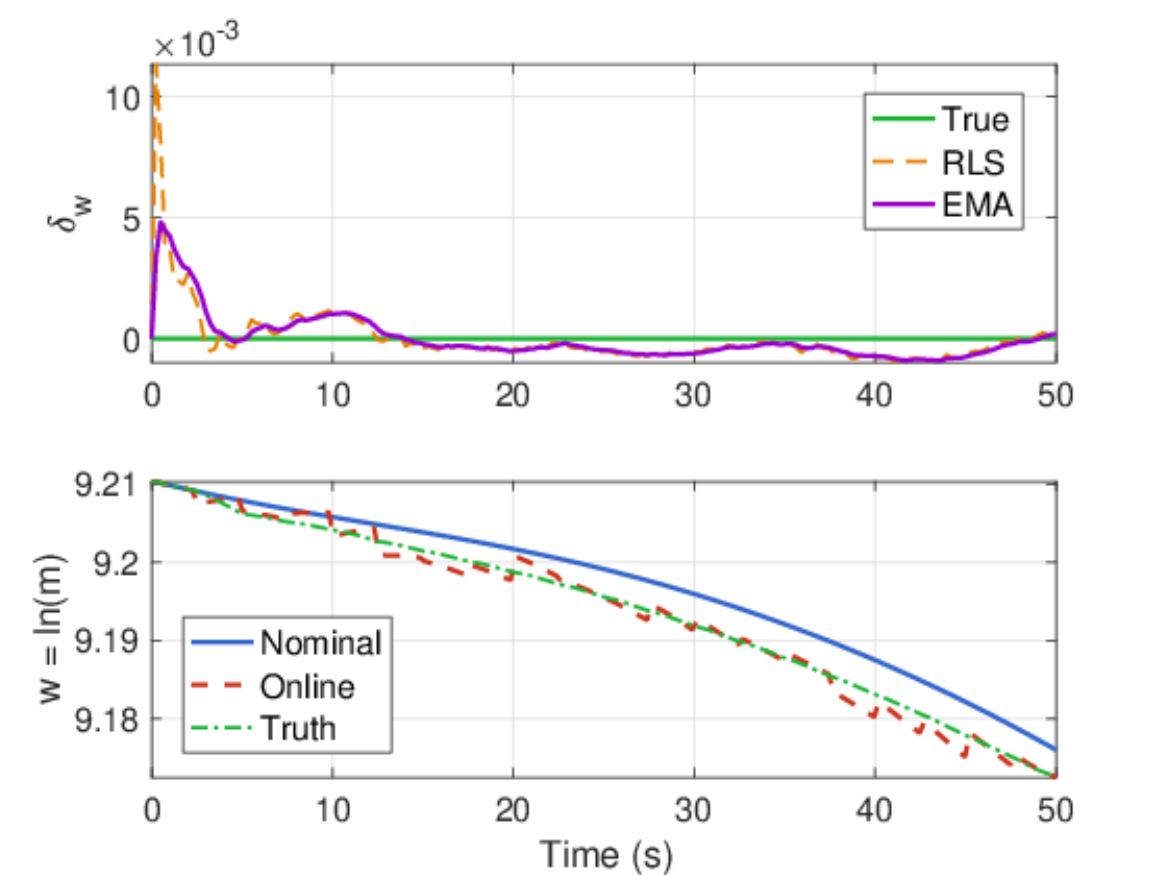}\\
\caption{Mass-channel perturbation and log-mass profile (Scenario~1).}
\label{fig:mass_perturb_1}
\end{figure}

\begin{figure}[H]
\centering
\includegraphics[width=3.25in]{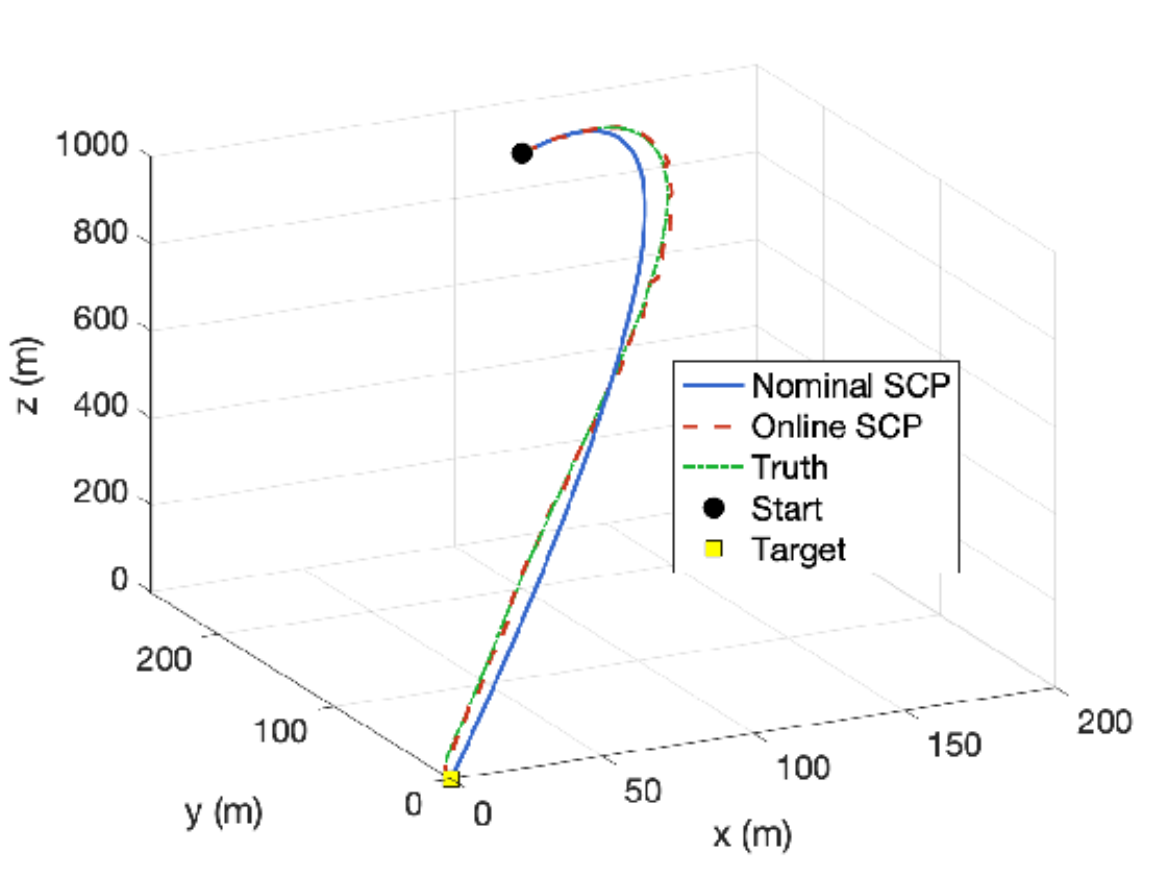}\\
\caption{3D landing trajectory (Scenario~2).}
\label{fig:traj_2}
\end{figure}

\begin{figure}[H]
\centering
\includegraphics[width=3.25in]{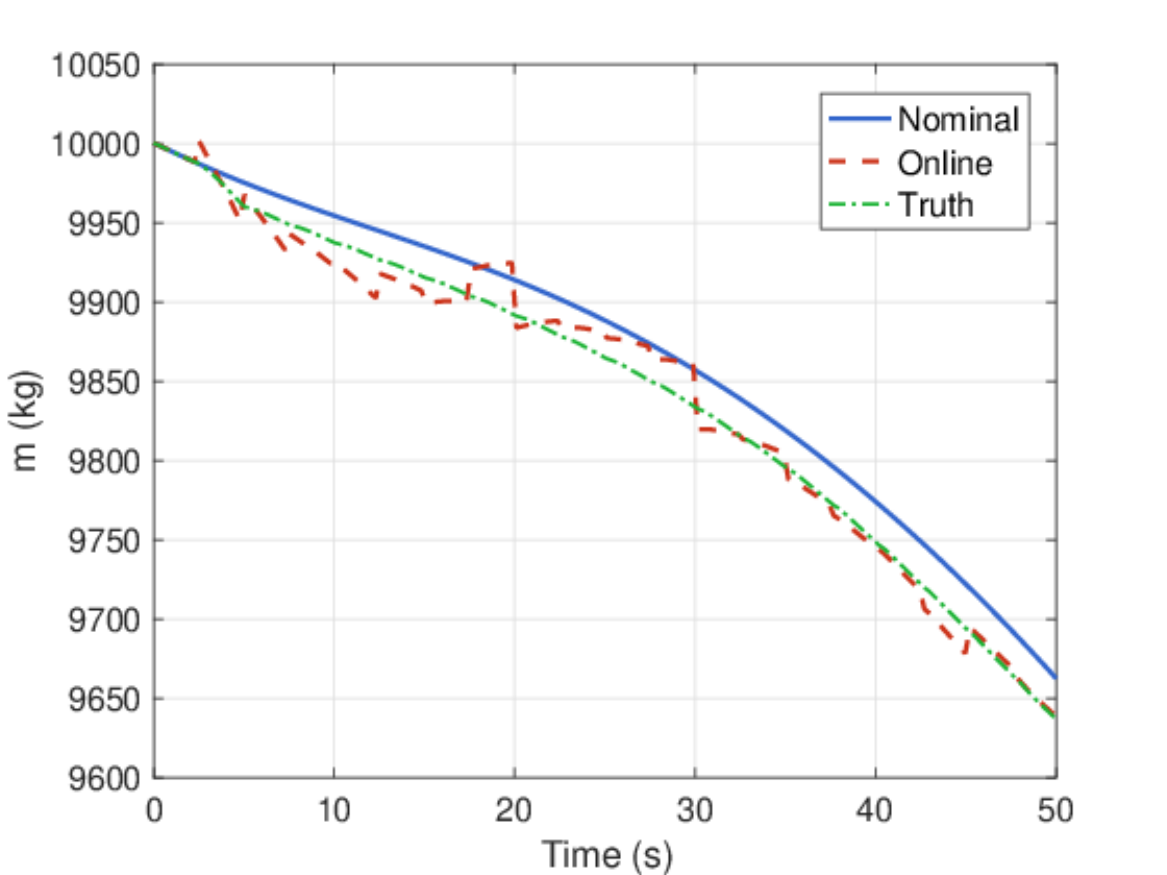}\\
\caption{Vehicle mass profile (Scenario~2).}
\label{fig:mass_2}
\end{figure}

\begin{figure}[H]
\centering
\includegraphics[width=3.25in]{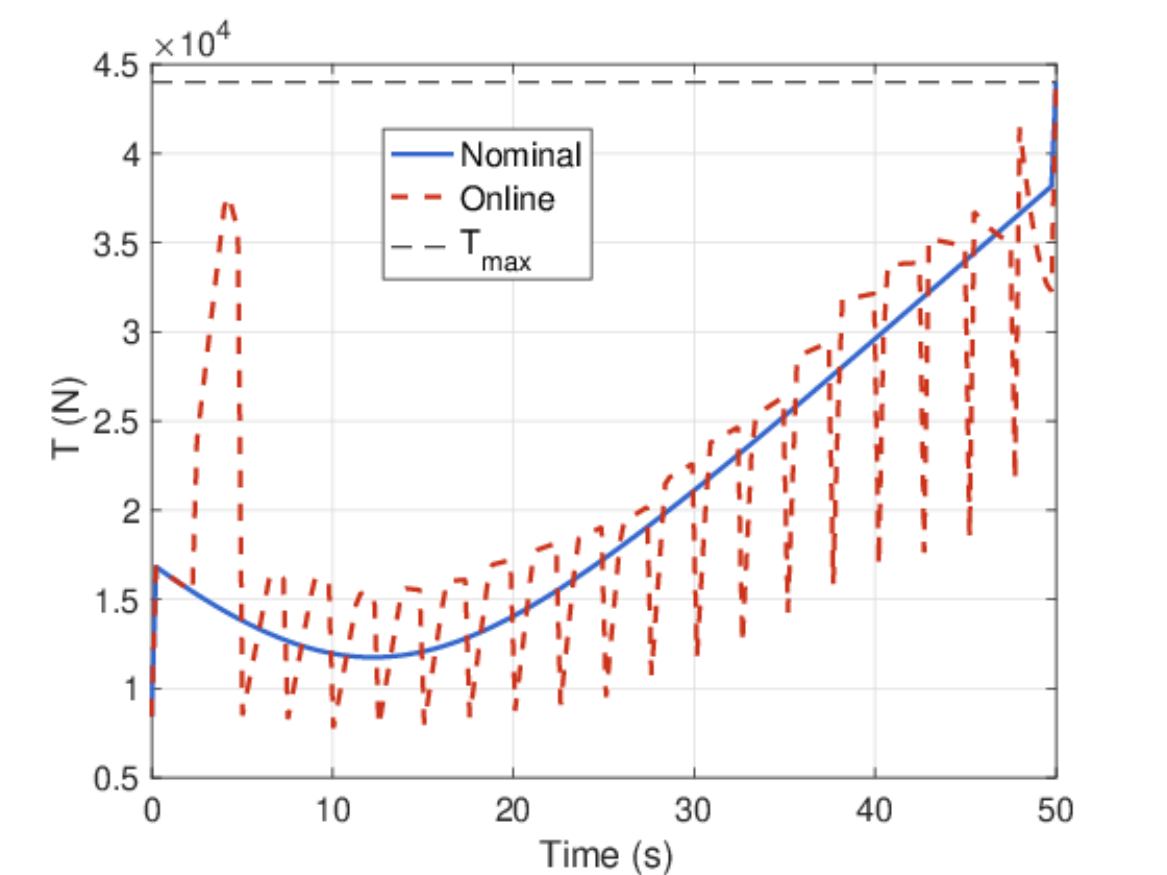}\\
\caption{Thrust magnitude profile (Scenario~2).}
\label{fig:thrust_2}
\end{figure}

\begin{figure}[H]
\centering
\includegraphics[width=3.25in]{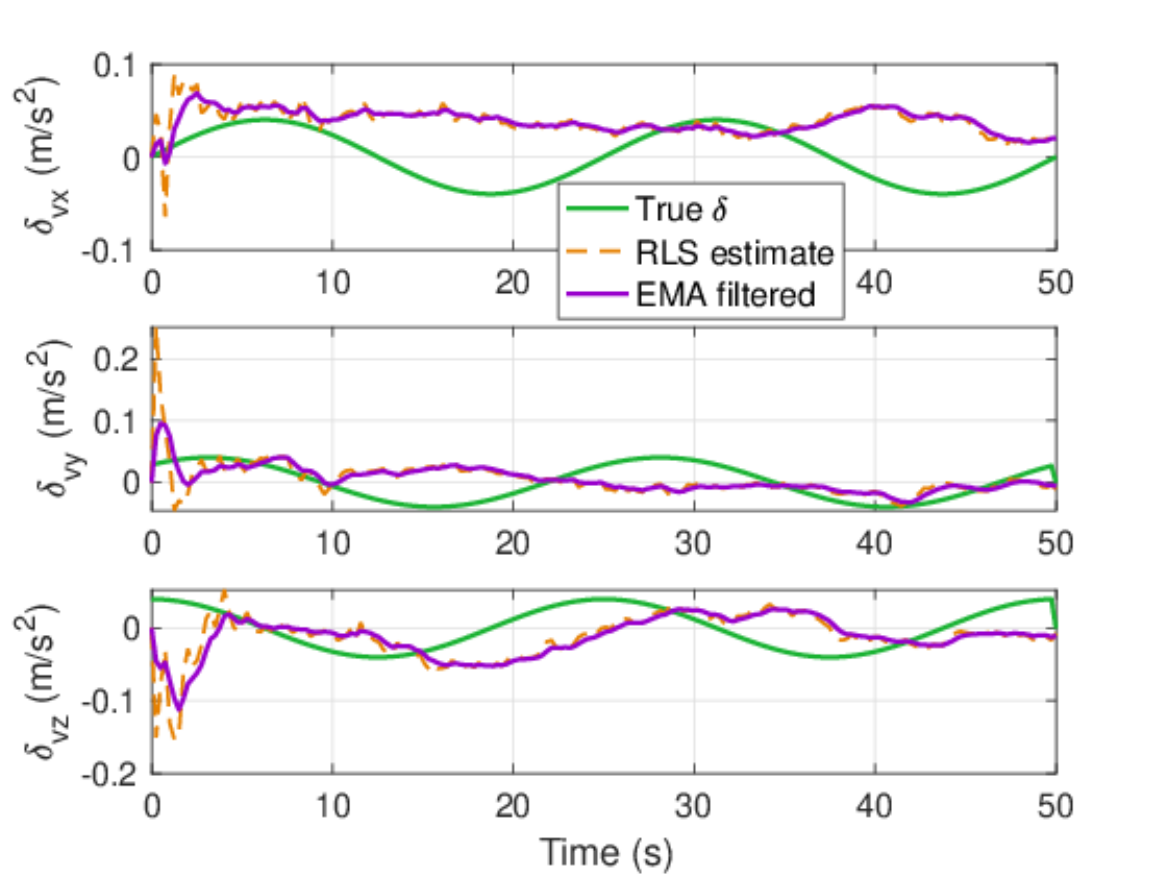}\\
\caption{Perturbation estimation (Scenario~2).}
\label{fig:perturb_2}
\end{figure}

\begin{figure}[H]
\centering
\includegraphics[width=3.25in]{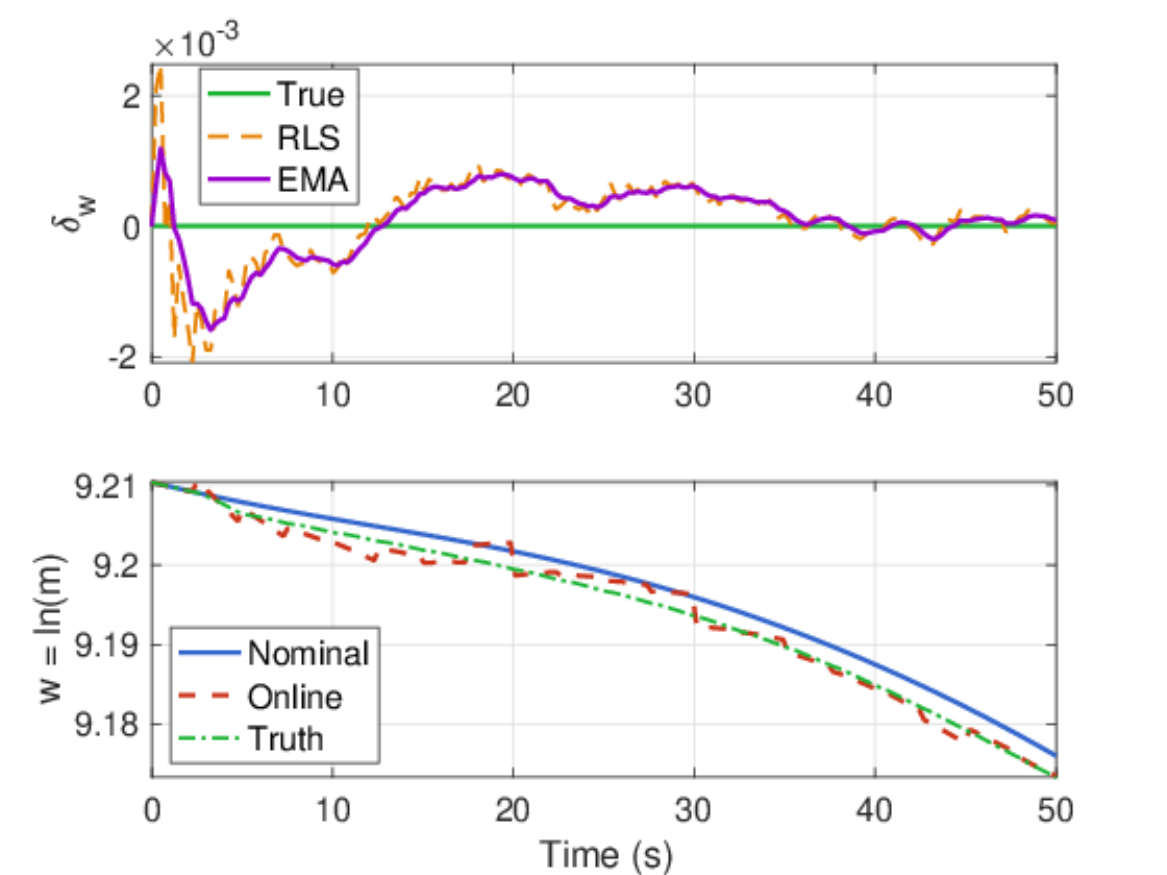}\\
\caption{Mass-channel perturbation and log-mass profile (Scenario~2).}
\label{fig:mass_perturb_2}
\end{figure}

\begin{figure}[H]
\centering
\includegraphics[width=3.25in]{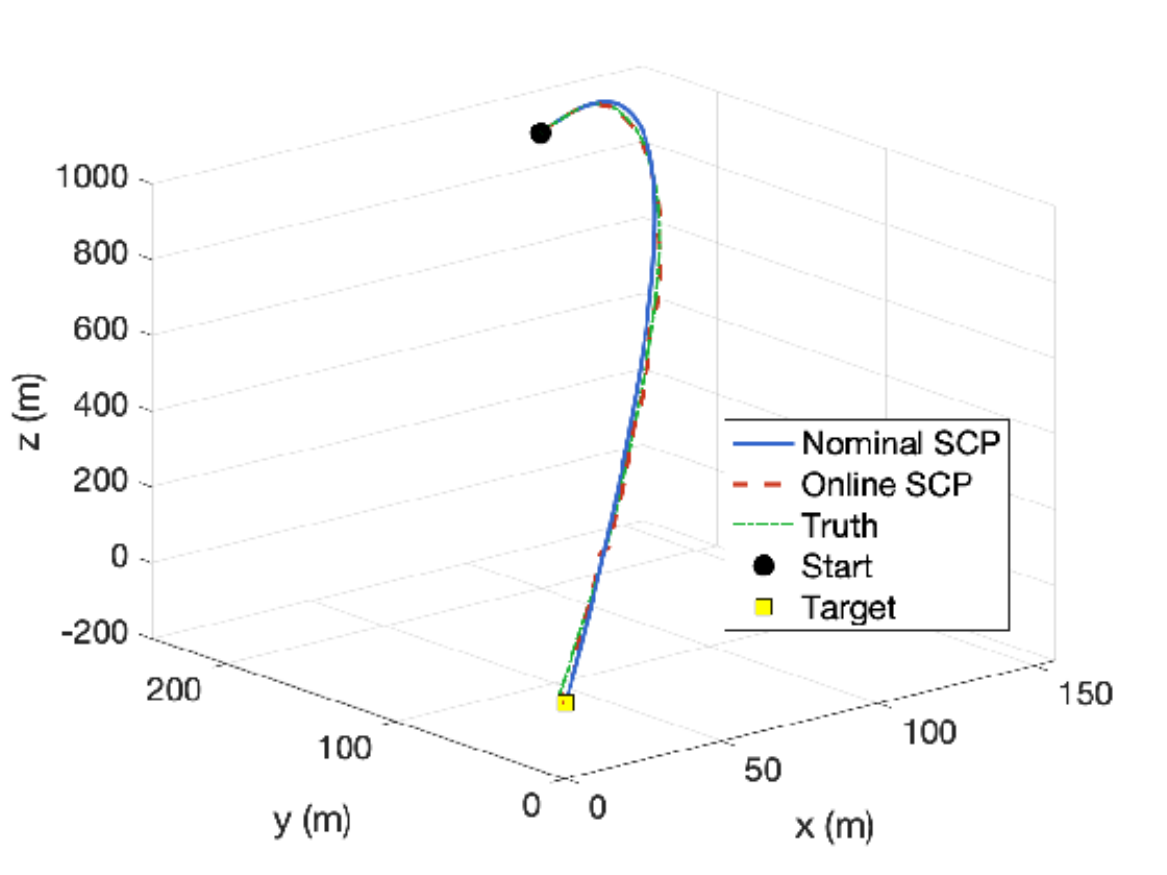}\\
\caption{3D landing trajectory (Scenario~3).}
\label{fig:traj_3}
\end{figure}

\begin{figure}[H]
\centering
\includegraphics[width=3.25in]{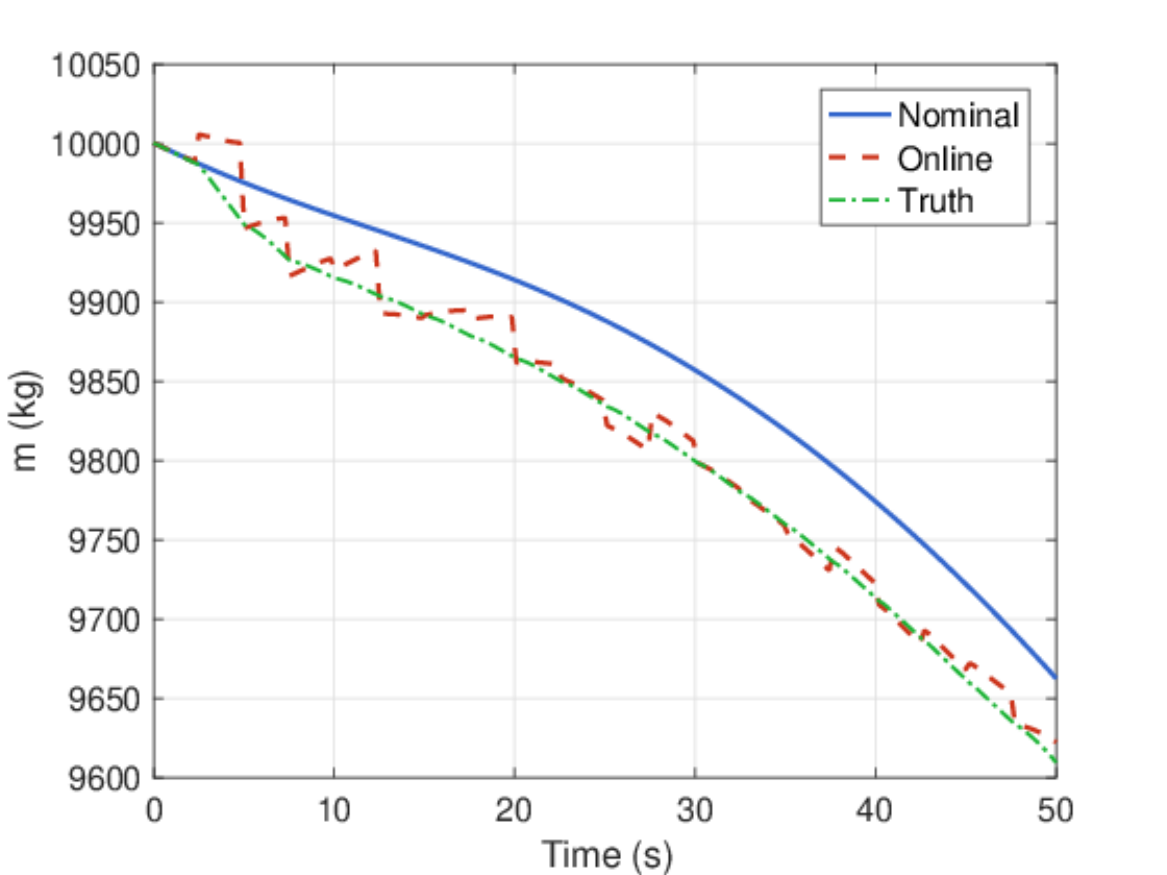}\\
\caption{Vehicle mass profile (Scenario~3).}
\label{fig:mass_3}
\end{figure}

\begin{figure}[H]
\centering
\includegraphics[width=3.25in]{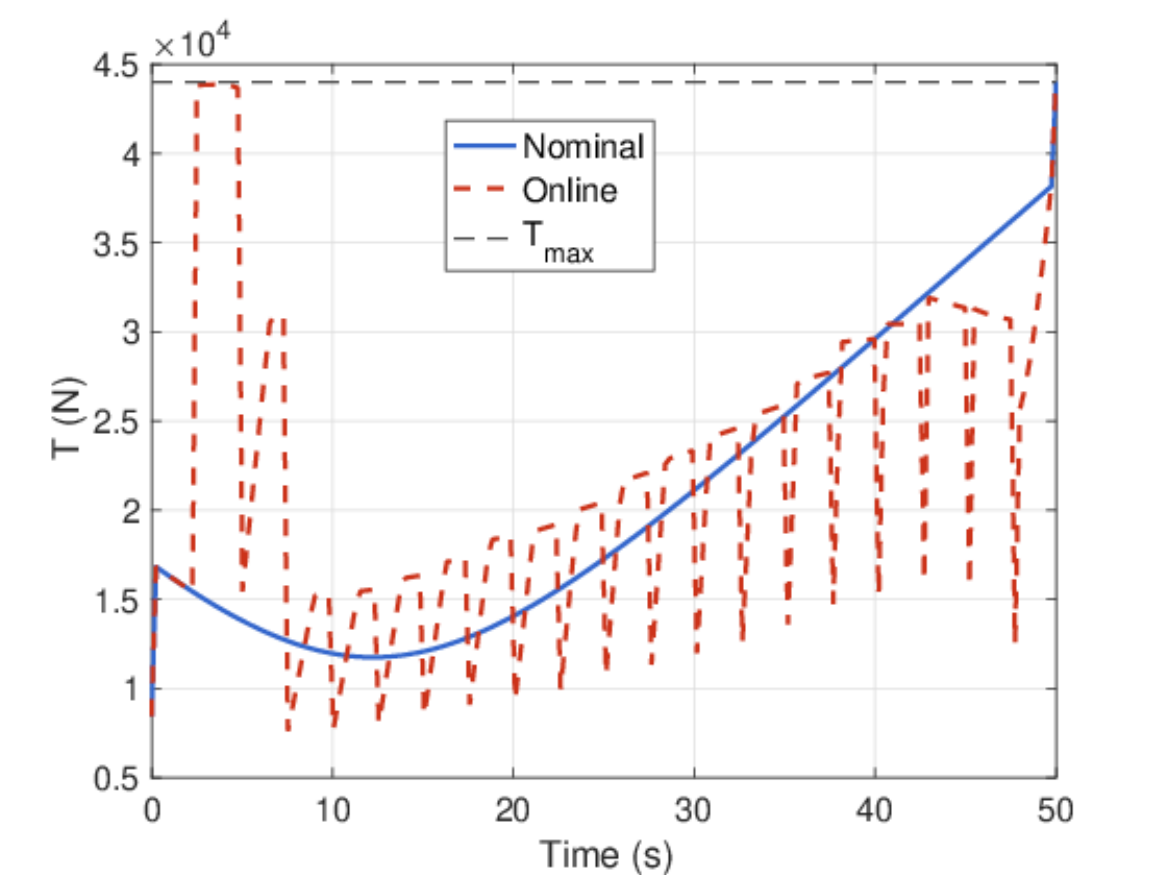}\\
\caption{Thrust magnitude profile (Scenario~3).}
\label{fig:thrust_3}
\end{figure}

\begin{figure}[H]
\centering
\includegraphics[width=3.25in]{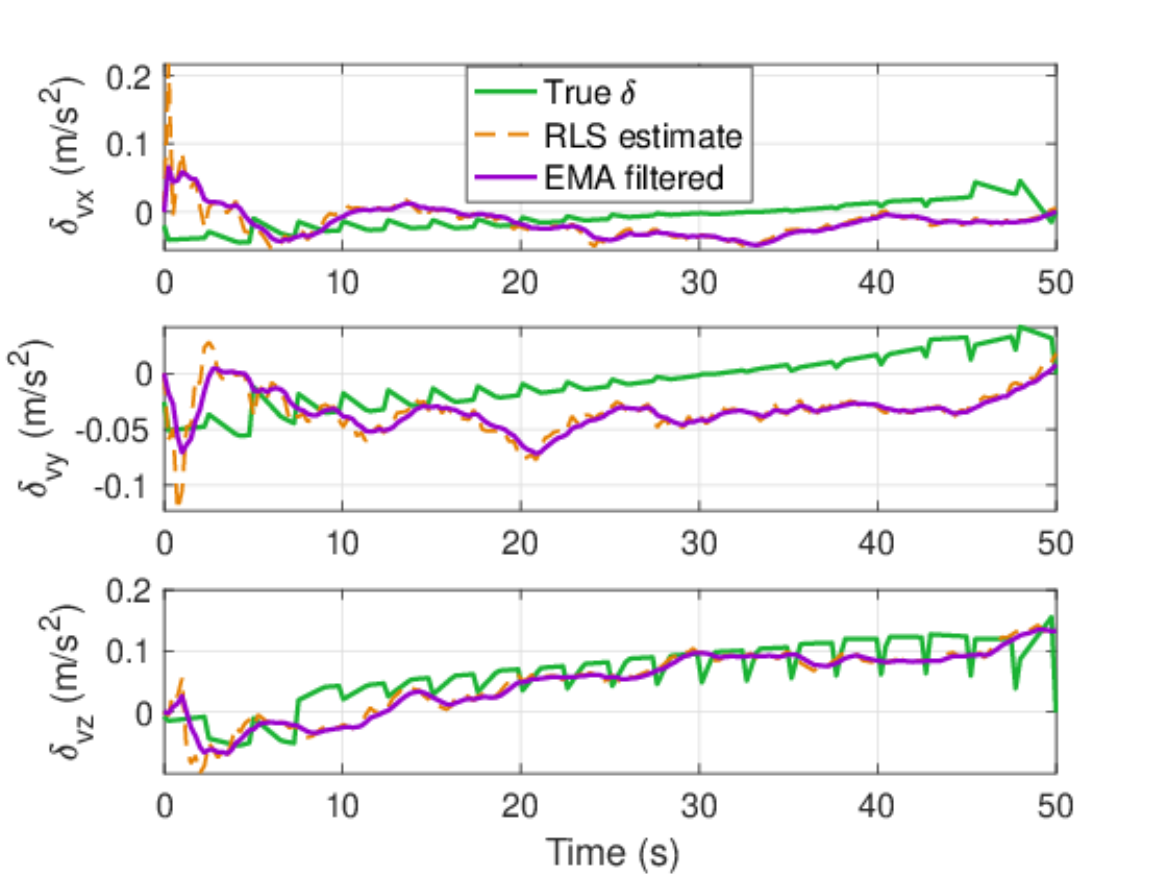}\\
\caption{Perturbation estimation (Scenario~3).}
\label{fig:perturb_3}
\end{figure}

\begin{figure}[H]
\centering
\includegraphics[width=3.25in]{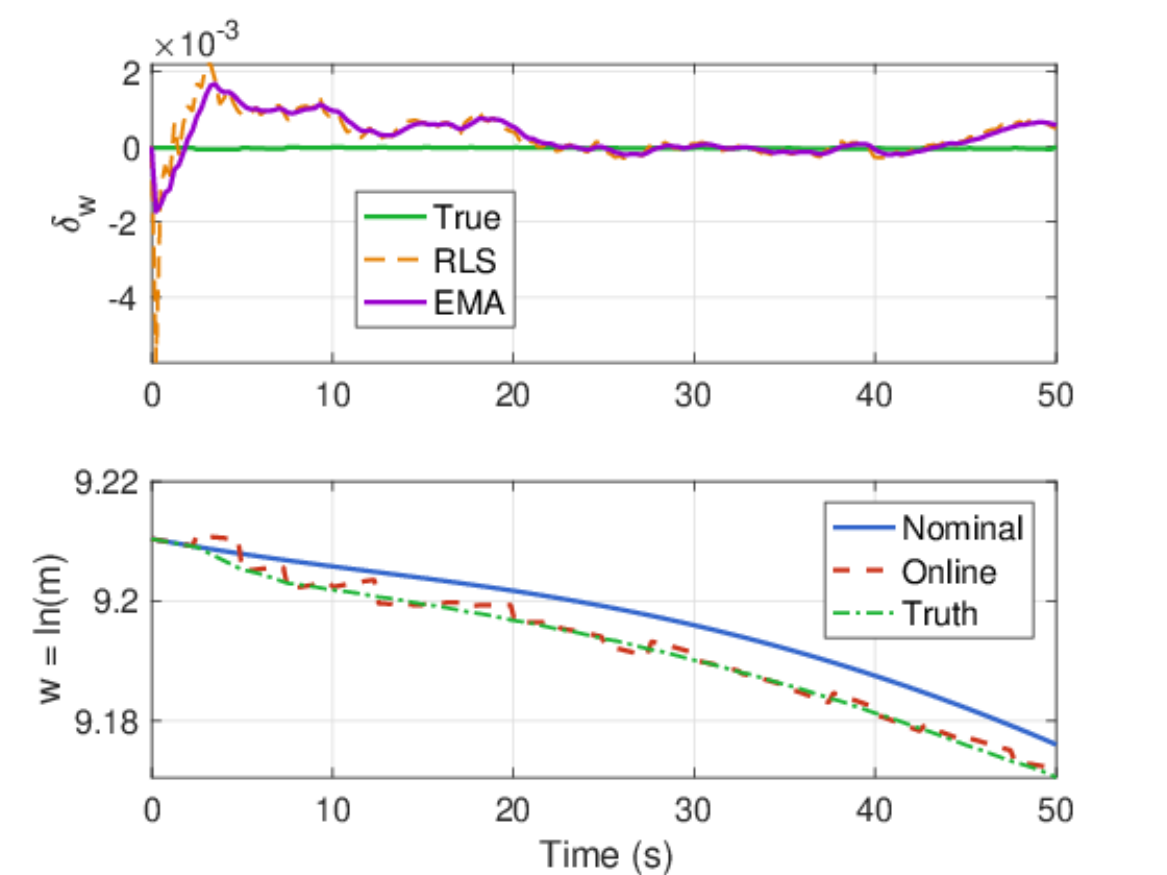}\\
\caption{Mass-channel perturbation and log-mass profile (Scenario~3).}
\label{fig:mass_perturb_3}
\end{figure}

\begin{figure}[H]
\centering
\includegraphics[width=3.25in]{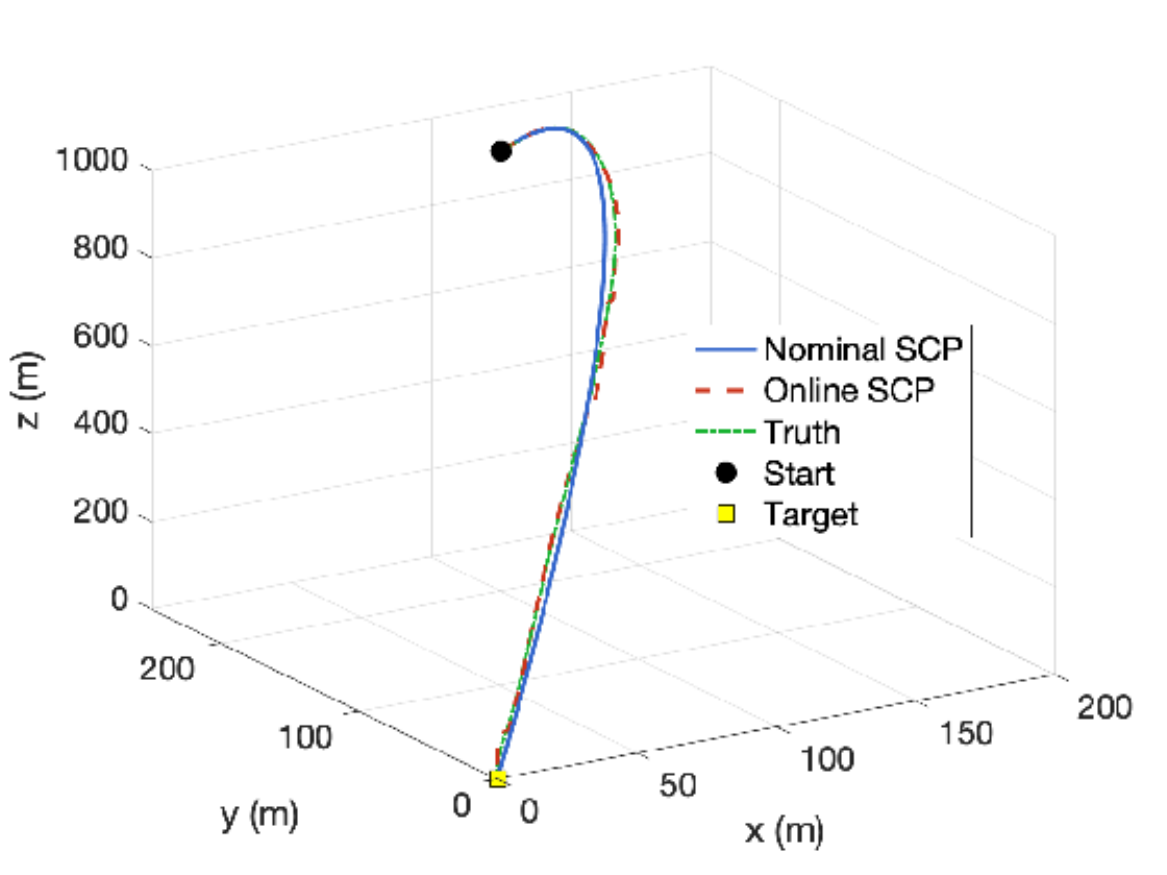}\\
\caption{3D landing trajectory (Scenario~4).}
\label{fig:traj_4}
\end{figure}

\begin{figure}[H]
\centering
\includegraphics[width=3.25in]{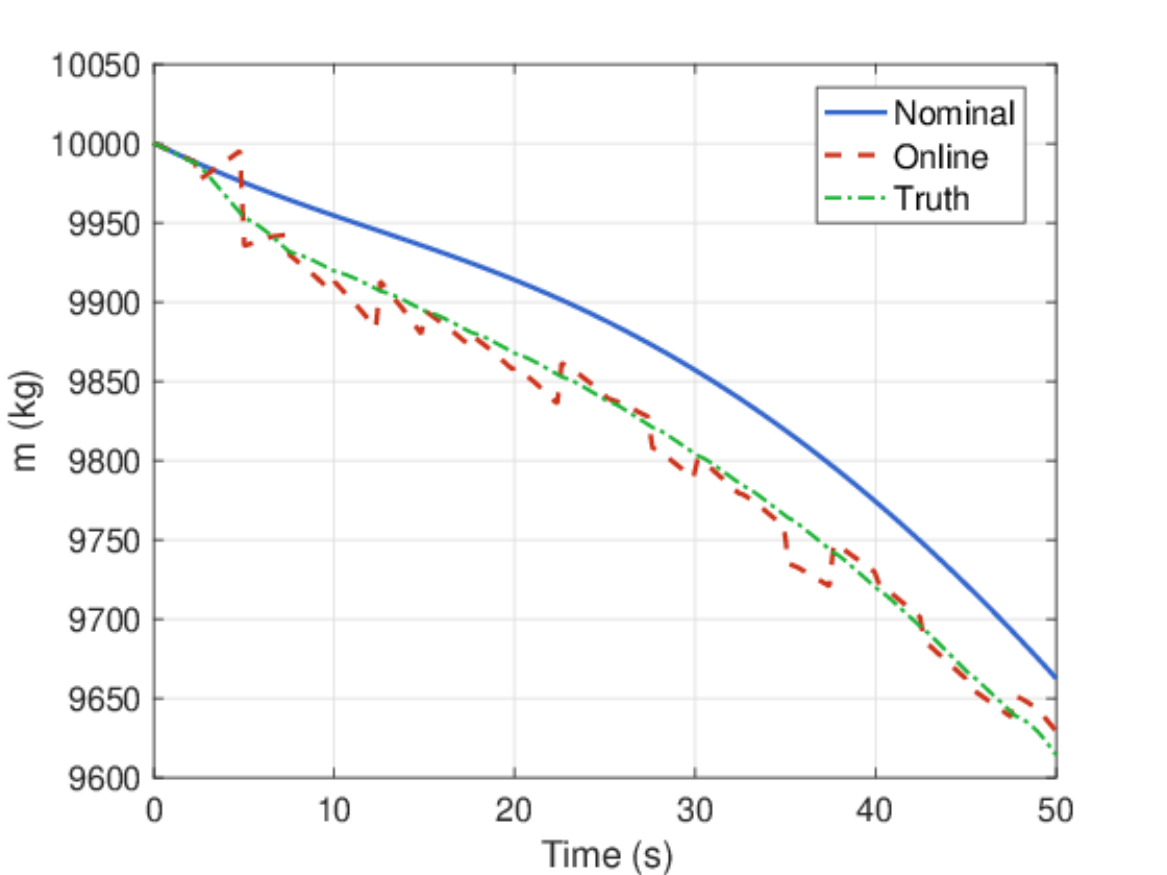}\\
\caption{Vehicle mass profile (Scenario~4).}
\label{fig:mass_4}
\end{figure}

\begin{figure}[H]
\centering
\includegraphics[width=3.25in]{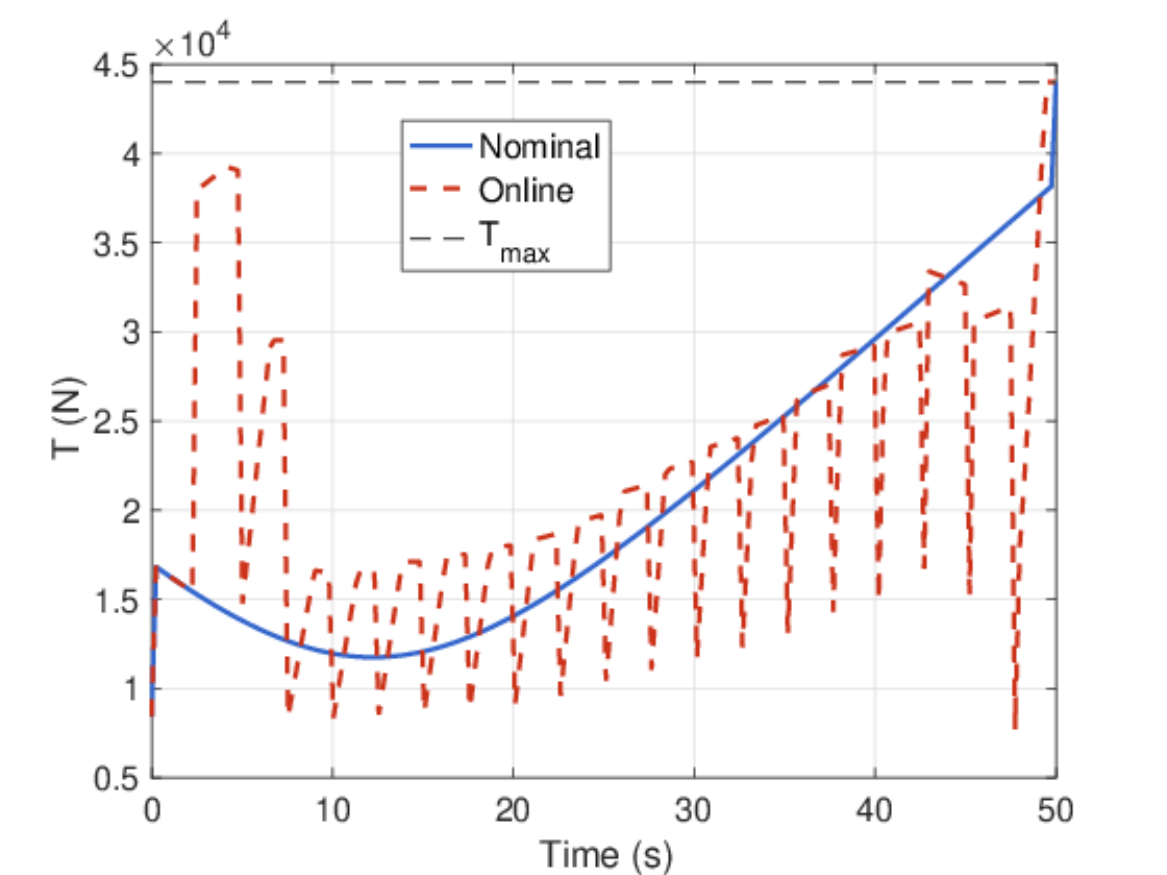}\\
\caption{Thrust magnitude profile (Scenario~4).}
\label{fig:thrust_4}
\end{figure}

\begin{figure}[H]
\centering
\includegraphics[width=3.25in]{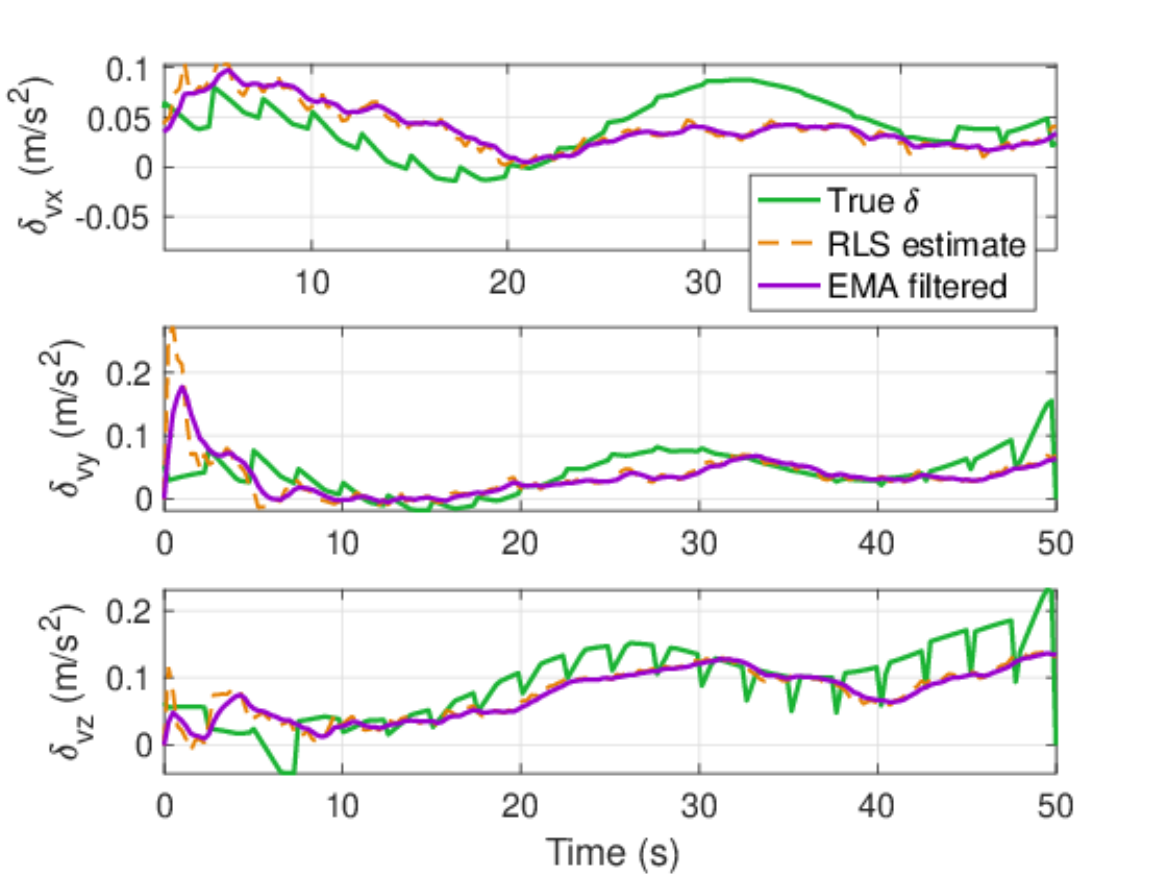}\\
\caption{Perturbation estimation (Scenario~4).}
\label{fig:perturb_4}
\end{figure}

\begin{figure}[H]
\centering
\includegraphics[width=3.25in]{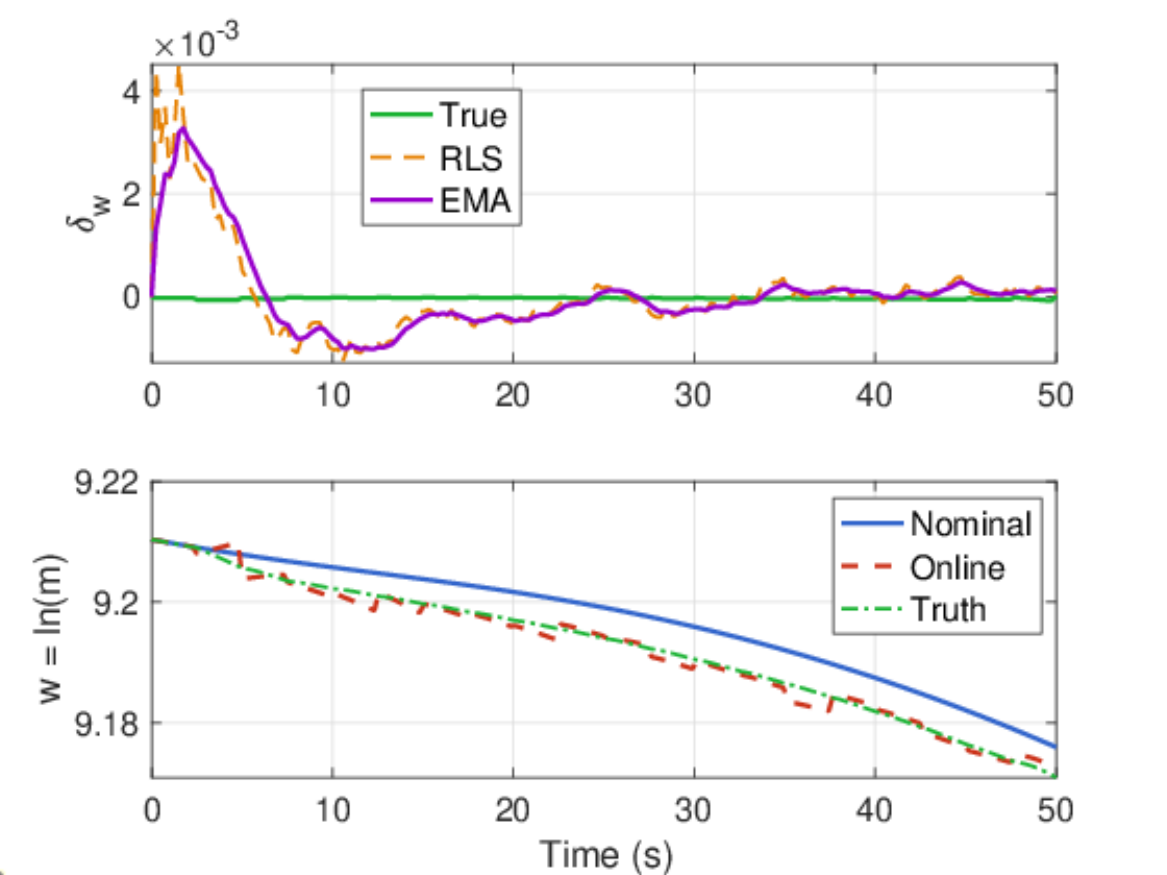}\\
\caption{Mass-channel perturbation and log-mass profile (Scenario~4).}
\label{fig:mass_perturb_4}
\end{figure}

\section{Conclusions}
\label{sec:conclusions}

This paper has developed and analyzed a guidance framework that integrates
online model identification with sequential convex programming for
three-dimensional lunar powered descent trajectory optimization. The
nonconvex landing problem was reformulated into a sequence of convex
second-order cone programs through a change of variables, successive
linearization of the thrust-to-mass constraint, and a second-order cone
relaxation of the thrust direction constraint that was shown to be lossless
under mild and physically reasonable conditions. An online identification
layer combining a recursive least squares filter with exponential
forgetting and an exponential moving average smoother was introduced to
estimate unknown gravitational, thrust-scale, and mass-gauging
perturbations from noisy flight measurements in real time, and the
resulting corrected bias was incorporated into the convex subproblem
solved at each guidance cycle without compromising its convexity or
feasibility. Both a baseline offline SCP algorithm and a receding-horizon
online extension with model identification were presented, and a
theoretical analysis established the losslessness of the convex
relaxation, the mean-square convergence and stability of the identification
filters, the guaranteed convergence of the SCP iteration to a stationary
point of the original nonconvex problem, and explicit characterizations of
the convergence radius and convergence rate, including the residual
neighborhood to which the online algorithm converges as a function of the
perturbation estimation error.
Numerical simulations across four
perturbation scenarios of increasing complexity confirmed these
theoretical predictions, demonstrating that the proposed online algorithm
consistently improves landing accuracy and propellant-budget fidelity
relative to an uncorrected nominal trajectory, while preserving the rapid,
predictable convergence behavior characteristic of convex optimization.
Collectively, these results indicate that online model identification and
sequential convex programming can be combined without sacrificing the
theoretical guarantees that make convex-optimization-based guidance
attractive for safety-critical, real-time deployment.
 

\acknowledgements 
Generative AI tools were used for language refinement during the preparation of this paper. All technical content and interpretations were developed by the author.

\bibliographystyle{IEEEtran}
\bibliography{MyBibFile}

\begin{thebibliography}{10}
\providecommand{\url}[1]{#1}
\csname url@samestyle\endcsname
\providecommand{\newblock}{\relax}
\providecommand{\bibinfo}[2]{#2}
\providecommand{\BIBentrySTDinterwordspacing}{\spaceskip=0pt\relax}
\providecommand{\BIBentryALTinterwordstretchfactor}{4}
\providecommand{\BIBentryALTinterwordspacing}{\spaceskip=\fontdimen2\font plus
\BIBentryALTinterwordstretchfactor\fontdimen3\font minus \fontdimen4\font\relax}
\providecommand{\BIBforeignlanguage}[2]{{%
\expandafter\ifx\csname l@#1\endcsname\relax
\typeout{** WARNING: IEEEtran.bst: No hyphenation pattern has been}%
\typeout{** loaded for the language `#1'. Using the pattern for}%
\typeout{** the default language instead.}%
\else
\language=\csname l@#1\endcsname
\fi
#2}}
\providecommand{\BIBdecl}{\relax}
\BIBdecl

\bibitem{wang2025a}
Z.~Wang, ``From new commercial moon landers to asteroid investigations, expect a slate of exciting space missions in 2025,'' 2025, \url{https://theconversation.com/from-new-commercial-moon-landers-to-asteroid-investigations-expect-a-slate-of-exciting-space-missions-in-2025-243645} [Accessed: 2026-06-25].

\bibitem{wang2025b}
------, ``Landing on the moon is an incredibly difficult feat -- 2025 has brought successes and shortfalls for companies and space agencies,'' 2025, \url{https://theconversation.com/landing-on-the-moon-is-an-incredibly-difficult-feat-2025-has-brought-successes-and-shortfalls-for-companies-and-space-agencies-256046} [Accessed: 2026-06-25].

\bibitem{song2020survey}
Z.~Song, C.~Wang, S.~Theil, D.~Seelbinder, M.~Sagliano, X.-f. Liu, and Z.-j. Shao, ``Survey of autonomous guidance methods for powered planetary landing,'' \emph{Frontiers of Information Technology \& Electronic Engineering}, vol.~21, no.~5, pp. 652--674, 2020.

\bibitem{lorenz2023planetary}
R.~D. Lorenz, ``Planetary landings with terrain sensing and hazard avoidance: A review,'' \emph{Advances in Space Research}, vol.~71, no.~1, pp. 1--15, 2023.

\bibitem{liu2026survey}
X.~Liu, S.~Li, and M.~Xin, ``Survey of trajectory optimization methods for mars entry and powered descent,'' \emph{Journal of Guidance, Control, and Dynamics}, vol.~49, no.~1, pp. 216--239, 2026.

\bibitem{acikmese2007convex}
B.~Acikmese and S.~R. Ploen, ``Convex programming approach to powered descent guidance for mars landing,'' \emph{Journal of Guidance, Control, and Dynamics}, vol.~30, no.~5, pp. 1353--1366, 2007.

\bibitem{wang2024survey}
Z.~Wang, ``A survey on convex optimization for guidance and control of vehicular systems,'' \emph{Annual Reviews in Control}, vol.~57, p. 100957, 2024.

\bibitem{cheng2019fast}
L.~Cheng, Z.~Wang, F.~Jiang, and J.~Li, ``Fast generation of optimal asteroid landing trajectories using deep neural networks,'' \emph{IEEE Transactions on Aerospace and Electronic Systems}, vol.~56, no.~4, pp. 2642--2655, 2019.

\bibitem{cheng2019real}
L.~Cheng, Z.~Wang, and F.~Jiang, ``Real-time control for fuel-optimal moon landing based on an interactive deep reinforcement learning algorithm,'' \emph{Astrodynamics}, vol.~3, no.~4, pp. 375--386, 2019.

\bibitem{cheng2020real}
L.~Cheng, Z.~Wang, Y.~Song, and F.~Jiang, ``Real-time optimal control for irregular asteroid landings using deep neural networks,'' \emph{Acta Astronautica}, vol. 170, pp. 66--79, 2020.

\bibitem{gaudet2020deep}
B.~Gaudet, R.~Linares, and R.~Furfaro, ``Deep reinforcement learning for six degree-of-freedom planetary landing,'' \emph{Advances in Space Research}, vol.~65, no.~7, pp. 1723--1741, 2020.

\bibitem{wang2018minimum}
Z.~Wang and M.~J. Grant, ``Minimum-fuel low-thrust transfers for spacecraft: A convex approach,'' \emph{IEEE Transactions on Aerospace and Electronic Systems}, vol.~54, no.~5, pp. 2274--2290, 2018.

\bibitem{wang2018optimization}
------, ``Optimization of minimum-time low-thrust transfers using convex programming,'' \emph{Journal of Spacecraft and Rockets}, vol.~55, no.~3, pp. 586--598, 2018.

\bibitem{wang2017constrained}
------, ``Constrained trajectory optimization for planetary entry via sequential convex programming,'' \emph{Journal of Guidance, Control, and Dynamics}, vol.~40, no.~10, pp. 2603--2615, 2017.

\bibitem{boyd2004convex}
S.~Boyd and L.~Vandenberghe, \emph{Convex optimization}.\hskip 1em plus 0.5em minus 0.4em\relax Cambridge University Press, 2004.

\bibitem{islam2019recursive}
S.~A.~U. Islam and D.~S. Bernstein, ``Recursive least squares for real-time implementation [lecture notes],'' \emph{IEEE Control Systems Magazine}, vol.~39, no.~3, pp. 82--85, 2019.

\bibitem{klinker2011exponential}
F.~Klinker, ``Exponential moving average versus moving exponential average,'' \emph{Mathematische Semesterberichte}, vol.~58, no.~1, pp. 97--107, 2011.

\bibitem{mao2018successive}
Y.~Mao, M.~Szmuk, X.~Xu, and B.~A{\c{c}}ikmese, ``Successive convexification: A superlinearly convergent algorithm for non-convex optimal control problems,'' \emph{arXiv preprint arXiv:1804.06539}, 2018.

\bibitem{haykin2008adaptive}
S.~S. Haykin, \emph{Adaptive filter theory}.\hskip 1em plus 0.5em minus 0.4em\relax Pearson Education India, 2008.

\bibitem{lofberg2004yalmip}
J.~Lofberg, ``Yalmip: A toolbox for modeling and optimization in matlab,'' in \emph{2004 IEEE International Conference on Robotics and Automation}, 2004, pp. 284--289.

\bibitem{domahidi2013ecos}
A.~Domahidi, E.~Chu, and S.~Boyd, ``Ecos: An socp solver for embedded systems,'' in \emph{2013 European control conference (ECC)}, 2013, pp. 3071--3076.

\end{thebibliography}

\thebiography
\begin{biographywithpic}
{Zhenbo Wang}{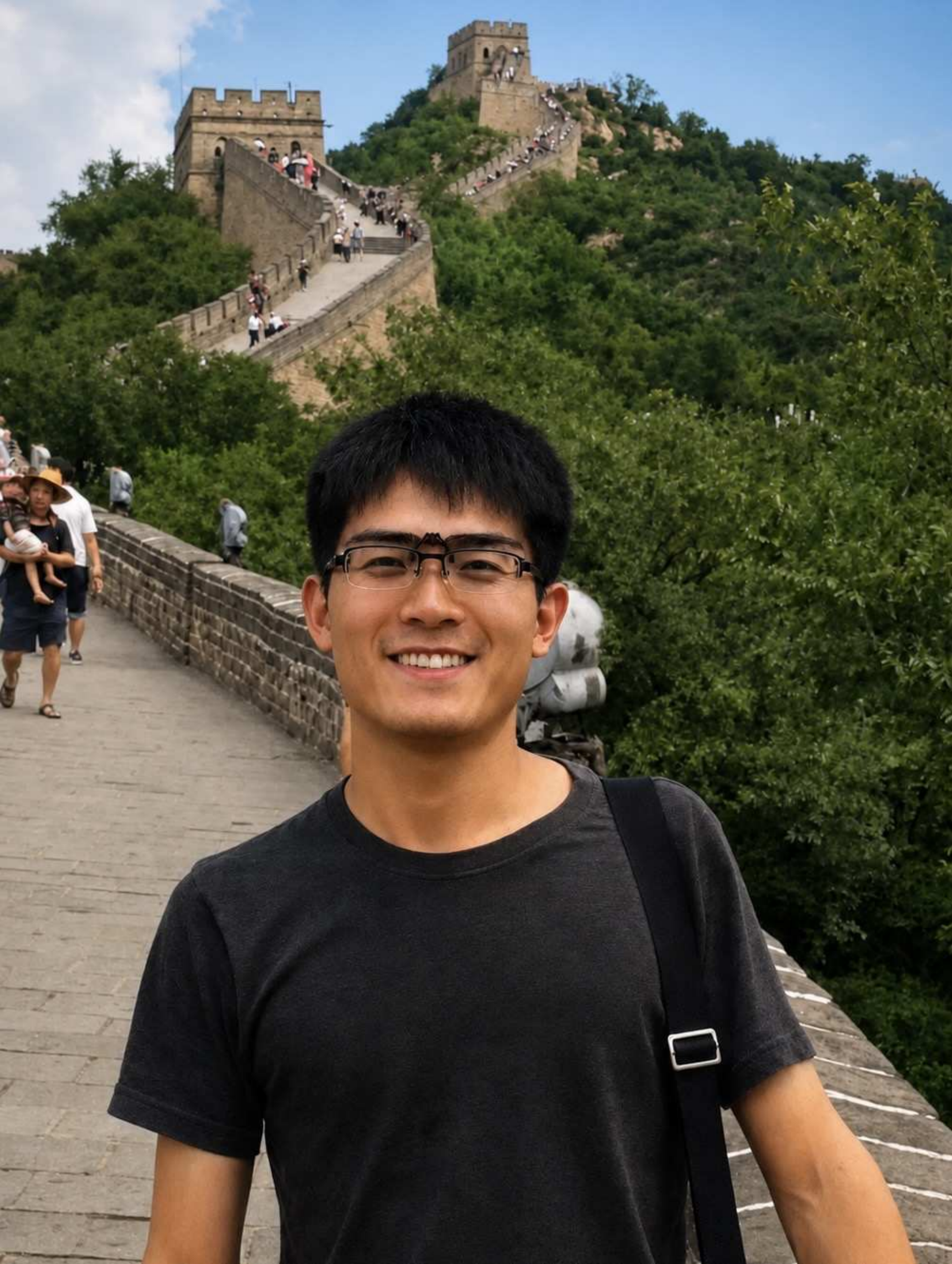} received his B.E. degree in Astronautics from Nanjing University of Aeronautics and Astronautics in 2010 and his M.E. degree in Control Engineering from Beihang University in 2013. In 2018, he received his Ph.D. degree in Aeronautics and Astronautics from Purdue University and joined the University of Tennessee Knoxville (UTK) as an Assistant Professor. He is now an Associate Professor in the Department of Mechanical and Aerospace Engineering and the director of the Autonomous Systems Laboratory at UTK. He is a recipient of the 2023 NSF Faculty Early Career Development Program (CAREER) Award, the 2023 Louis and Ann Hoffman Endowed Excellence in Research Award, and the 2024 Professional Promise in Research Award. His research interests are control, optimization, and machine learning for various engineering applications including space systems, air vehicles, connected and automated vehicles, and power and energy systems. He is a Senior Member of the American Institute of Aeronautics and Astronautics (AIAA) and a member of the AIAA Atmospheric Flight Mechanics (AFM) Technical Committee. He is a Senior Editor of \textit{IEEE Transactions on Aerospace and Electronic Systems}.
\end{biographywithpic}

\end{document}